\providecommand{\U}[1]{\protect\rule{.1in}{.1in}}
\newtheorem{proposition}{Proposition}[section]
\newtheorem{theorem}[proposition]{Theorem}
\newtheorem{corollary}[proposition]{Corollary}
\newtheorem{lemma}[proposition]{Lemma}
\newtheorem{definition}[proposition]{Definition}
\newtheorem{remark}[proposition]{Remark}
\newtheorem{condition}[proposition]{Condition}
\numberwithin{equation}{section}
\numberwithin{proposition}{section}
\g@addto@macro{\endabstract}{\@setabstract}
\newcommand{\authorfootnotes}{\renewcommand\thefootnote{\@fnsymbol\c@footnote}}%
\begin{document}

\begin{center}
  \LARGE
  Maximum Likelihood Estimation for Small Noise Multiscale Diffusions \par \bigskip

  \normalsize
  \authorfootnotes
  Konstantinos Spiliopoulos
\textsuperscript{1},
Alexandra Chronopoulou
\textsuperscript{2} \par \bigskip

  \textsuperscript{1}Department of Mathematics \& Statistics,
Boston University\\
111 Cummington Street, Boston MA 02215, e-mail: kspiliop@math.bu.edu \par
  \textsuperscript{2}Department of Statistics and Applied Probability,
 University of California, Santa Barbara\\
Santa Barbara, CA 93106-3110, e-mail: chronopoulou@pstat.ucsb.edu \par \bigskip

  \today
\end{center}

\date{\today}


\begin{abstract}
We study the problem of parameter estimation for stochastic
differential equations with small noise and fast oscillating
parameters. Depending on how fast the intensity of the noise goes to
zero relative to the homogenization parameter, we consider three
different regimes. For each regime, we construct the maximum likelihood estimator
and we study its consistency and asymptotic normality properties.
A simulation study for the first order Langevin equation with a two scale potential is  also provided.
\end{abstract}

\noindent \textbf{Keywords:} parameter estimation, central limit theorem, multiscale diffusions, dynamical systems, rough energy landscapes.

\noindent \textbf{MSC:} 62M05, 62M86, 60F05, 60G99

\noindent \textbf{Acknowledgement:} The authors would like to thank the anonymous reviewer for pointing out a gap  in the proof of Theorem 5.1 in the original article, as well as all  comments that lead to a significant improvement of the article. K.S. was partially supported, during revisions of this article, by the National Science Foundation
(DMS 1312124).


\section{Introduction}
Data obtained from a physical system sometimes possess many
characteristic length and time scales. In such cases,
it is desirable to construct models that are effective for
large-scale structures, whilst capturing small scales at the same
time. Modeling this type of data
via diffusion type models may be well-suited in many cases.  Thus, multiscale diffusion models have been used to describe
the behavior of physical phenomena in  scientific areas such as chemistry and biology \cite{CPV2010,Janke, PaviotisStuart2007, Zwanzig},
ocean-atmosphere sciences \cite{MFK2008}, finance and econometrics
\cite{AMZ2,FouquePapanicolaouSricar2000}. In many of these
problems, the noise is taken to be small because one may, for example, be
interested in modeling (a):  rare transition events between equilibrium
states of a rough energy landscape
\cite{DupuisSpiliopoulosWang2, Janke,Zwanzig}, or (b):
short time maturity asymptotics for fast mean reverting stochastic
volatility models \cite{FengFordeFouque, FengFouqueKumar}. See also \cite{FreidlinWentzell88,Kutoyants1994} for a thorough discussion on different mathematical and statistical modeling aspects of perturbations of dynamical systems by small noise.

Parameter estimation in multiscale models with small noise is a problem of great practical
importance, due to their wide range of applications, but also of great difficulty, due to the different separating scales. The goal of
this paper is to develop a theoretical framework for the estimation of unknown parameters in a multiscale diffusion
model with vanishing noise. More specifically, let $T>0$ be given
and consider the $d$-dimensional process
$X^{\epsilon}\doteq\{X_{t}^{\epsilon},0\leq t\leq T\}$ satisfying
the stochastic differential equation (SDE)
\begin{equation}
dX_{t}^{\epsilon}=\left[  \frac{\epsilon}{\delta}b_{\theta}\left(  X_{t}^{\epsilon
},\frac{X_{t}^{\epsilon}}{\delta}\right)  +c_{\theta}\left(  X_{t}^{\epsilon}%
,\frac{X_{t}^{\epsilon}}{\delta}\right)  \right]  dt+\sqrt{\epsilon}%
\sigma\left(  X_{t}^{\epsilon},\frac{X_{t}^{\epsilon}}{\delta}\right)
dW_{t},\hspace{0.2cm}X_{0}^{\epsilon}=x_{0}, \label{Eq:LDPandA1}%
\end{equation}
where $\delta=\delta(\epsilon)\downarrow0$ as $\epsilon\downarrow0$, $\theta\in\Theta\subset \mathbb{R}^{p}$ is an unknown parameter and $W_{t}$ is a standard $d$-dimensional Wiener process. The functions $b_{\theta}(x,y),c_{\theta}(x,y)$ and $\sigma(x,y)$ are assumed to be smooth, in the sense of Condition \ref{A:Assumption1}, and periodic with period $\lambda$ in every direction with respect to the second variable.

The rate of convergence of $\delta$ and $\epsilon$ to zero determines the type of equation that one obtains in the limit. For example, if $\delta$ is of order 1 as $\epsilon$ goes to zero, then equation (\ref{Eq:LDPandA1}) reduces to a deterministic ODE that we obtain if we set $\epsilon$ equal to zero. On the other hand, if $\epsilon$ is of order 1 as $\delta$ goes to zero, then homogenization occurs and this results to an equation with homogenized coefficients. When both parameters $\epsilon$ and $\delta$ go to zero together, then we need to consider three different regimes depending on how fast $\epsilon$ goes to zero relative to $\delta$:
\begin{equation}
\lim_{\epsilon\downarrow0}\frac{\epsilon}{\delta}=%
\begin{cases}
\infty & \text{Regime 1,}\\
\gamma\in(0,\infty) & \text{Regime 2,}\\
0 & \text{Regime 3.}%
\end{cases}
\label{Def:ThreePossibleRegimes}%
\end{equation}

We mention here that asymptotic problems for models like
(\ref{Eq:LDPandA1}) have a long history in the mathematical
literature. We refer the interested reader to classical manuscripts
such as \cite{BLP, FreidlinWentzell88, PS} for averaging and
homogenization results and to the more recent articles \cite{DupuisSpiliopoulos,FS} for large
deviations results and
\cite{DupuisSpiliopoulosWang,DupuisSpiliopoulosWang2} for importance
sampling results on related rare event estimation problems.

In (\ref{Eq:LDPandA1}) we assume that the drift term, through the functions $b_{\theta}$ and $c_{\theta}$, depends on a physical parameter $\theta$.  Generally, from a statistical inference point of view, the main questions of interest are the following:
\begin{enumerate}
\item{How can one estimate the fast oscillating parameter $\delta$ and the intensity of the noise $\epsilon$?}
\item{How can one estimate the unknown parameter $\theta?$}
\end{enumerate}
The first question is undoubtedly a quite difficult one  and is not
addressed in the current work; see \cite{Papavasileiou2010} for some
related results for specific equations and further references.
Instead, we focus on the second question. Thus, assuming that the regime
of interaction between $\epsilon$ and $\delta$ is known, we want to
estimate the unknown parameter $\theta$ at time $T$,  based on the continuously observed process $X^{\epsilon}$ up to this time.

In order to do so, we will follow  the maximum likelihood method.  Maximum likelihood estimation in multiscale diffusions with noise of order  $O(1)$
has been studied by different authors and under different settings, see for example
\cite{AzencottBeriTimofeyev2010a, AzencottBeriTimofeyev2010b,KrumscheidPavliotisKalliadasis2011, PaviotisStuart2007, PapavasileiouPaviotisStuart2009}.
We also refer the reader to the manuscripts \cite{Bishwal, Kutoyants2004,Rao} for general results on statistical estimation for diffusion processes.
The novelty of the present paper stems from the fact that we address the problem of parameter estimation when both multiscale effects and
small noise are present, for all three regimes in (\ref{Def:ThreePossibleRegimes}), which requires a different approach for the construction of maximum likelihood estimators.

Indeed, in \cite{PaviotisStuart2007, PapavasileiouPaviotisStuart2009},  assuming that the noise is of order $O(1)$,
the authors fit the data from the prelimit process to the log-likelihood function of the limiting process, i.e., of the process to which
$X^{\epsilon=1,\delta}$ converges to, as $\delta\downarrow 0$. However, when the diffusion coefficient vanishes in the limit, the limiting process is no longer the solution of an SDE, but of an ODE (see Theorem \ref{T:LLN}), thus it is deterministic and does not have a well defined likelihood.
Therefore, instead of working with the likelihood function of the limiting process, we work with the log-likelihood of the original
multiscale model and we infer consistency and asymptotic normality (under conditions as described below)
by studying its limit.



In particular, under Regime 1 with $b=0$ and under Regimes $2$ and $3$ (see (\ref{Def:ThreePossibleRegimes})),
we prove that the maximum likelihood estimator  (MLE) is consistent and asymptotically normal under
broad conditions. The situation of Regime $1$ with $b\neq 0$ is more
complicated, because the original log-likelihood function does not
have a well defined limit as $\epsilon\downarrow 0$, due to the
$\epsilon/\delta\uparrow\infty$ terms. We address this issue by
introducing a modified (pseudo) log-likelihood which
is well defined in the limit. It turns out that the resulting pseudo MLE is not consistent,
however its ``bias'' can be computed exactly.
This is a known problem in multiscale parameter estimation problems \cite{AMZ2, AzencottBeriTimofeyev2010a, AzencottBeriTimofeyev2010b,KrumscheidPavliotisKalliadasis2011, PaviotisStuart2007, PapavasileiouPaviotisStuart2009};
see Section  \ref{S:MLE} for some more details on this.
\begin{remark}
In this article, by ``bias'' we mean the remainder term when we compute the limit of the estimator in probability, that is $\hat{\theta}_{\epsilon} \rightarrow ^{in\;P} \theta + \text{ bias }$. The reason why we use quotes is because bias is usually defined as the remainder of the $L^{2}$-limit of the estimator.
\end{remark}
Under Regime 1 with $b\neq 0$, we support our
findings with a simulation study for a small noise diffusion in a
two-scale potential field, a model of interest in the physical
chemistry literature, \cite{DupuisSpiliopoulosWang2, Janke,PS,
Zwanzig}. For this particular model, we can construct
an estimator that is consistent and normal.

The rest of the paper is organized as follows. In Section \ref{S:Prelim}, we
establish the necessary notation and we present the main ingredients
and assumptions needed in the sequel. In Section \ref{S:MLE} we discuss the maximum likelihood estimation problem for all three
regimes. For Regimes 2 and 3 and Regime 1 when $b=0$, we prove the consistency of the MLE, studying the limit of the log-likelihood function,
in Section \ref{S:LimitngLikelihood}, whereas 
we prove a central limit theorem for the MLE in Section \ref{S:CLT}.  Finally, in Section \ref{S:Examples} we study a
particularly interesting case for Regime $1$, when $b\neq 0$; a
small noise diffusion in a two-scale potential field, we prove a
central limit theorem for the pseudo MLE in this particular setup
and we present a simulated study illustrating the theoretical findings.


\section{Preliminaries, notation and assumptions}\label{S:Prelim}

We work with the canonical filtered probability space $(\Omega,\mathfrak{F}%
,\mathbb{P}_{\theta})$ equipped with a filtration $\mathfrak{F}_{t}$ that satisfies the usual conditions, namely, $\mathfrak{F}_{t}$ is right continuous and $\mathfrak{F}_{0}$ contains all $\mathbb{P}_{\theta}$-negligible sets.

Regarding the SDE (\ref{Eq:LDPandA1}) we impose the following condition.
\begin{condition}
\label{A:Assumption1}

\begin{enumerate}
\item{The parameter $\theta\in\Theta\subset \mathbb{R}^{p} $ where $\Theta$ is open, bounded and convex. Also, the coefficients $b_{\theta}(x,y),c_{\theta}(x,y)$ are Lipschitz continuous in $\theta$. }

\item The functions $b_{\theta}(x,y),c_{\theta}(x,y),\sigma(x,y)$ are Lipschitz continuous and
bounded in both variables. Moreover, they are periodic with period
$\lambda$ in the second variable in each direction. In the case of
Regime $1$ we additionally assume that they are
$C^{1}(\mathbb{R}^{d})$ in $y$ and $C^{2}(\mathbb{R}^{d})$ in $x$
with all partial derivatives continuous and globally bounded in $x$
and $y $.

\item The diffusion matrix $\sigma\sigma^{T}$ is uniformly nondegenerate.

\end{enumerate}
\end{condition}
For notational convenience we define the
operator $\cdot:\cdot$, where for two matrices $A=[a_{ij}],B=[b_{ij}]$
\[
A:B\doteq\sum_{i,j}a_{ij}b_{ij}.
\]
Under Regime $1$, we also impose the following condition.

\begin{condition}
\label{A:Assumption2} Consider the second order elliptic partial
differential operator
\begin{equation*}
\mathcal{L}_{x,\theta}^{1}=b_{\theta}(x,y)\cdot\nabla_{y}+\frac{1}{2}\sigma(x,y)\sigma
(x,y)^{T}:\nabla_{y}\nabla_{y} \label{OperatorRegime1}%
\end{equation*}
equipped with periodic boundary conditions in $y$ ($x$ is being
treated as a parameter here). Let $\mu^{1}_{\theta}(dy;x)$ be the
unique invariant measure corresponding to the operator
$\mathcal{L}_{x,\theta}^{1}$.
 Under Regime 1, we assume the standard centering condition
(see \cite{BLP}) for the drift term $b$:
\[
\int_{\mathcal{Y}}b_{\theta}(x,y)\mu^{1}_{\theta}(dy;x)=0, \text{
for all }\theta\in\Theta,
\]
where $\mathcal{Y}=\mathbb{T}^{d}$ denotes the $d$-dimensional torus.
\end{condition}

Under Conditions \ref{A:Assumption1} and \ref{A:Assumption2},
Theorem 3.3.4 in \cite{BLP} guarantees that for each
$\ell\in\{1,\ldots,d\}$ there is a unique, twice differentiable
function $\chi_{\ell}(x,y)$ that is one periodic in every direction
in $y$ and which solves the following cell problem:
\begin{equation}
\mathcal{L}_{x,\theta}^{1}\chi_{\ell,\theta}(x,y)=-b_{\ell,\theta}(x,y),\quad\int_{\mathcal{Y}}%
\chi_{\ell,\theta}(x,y)\mu^{1}_{\theta}(dy;x)=0. \label{Eq:CellProblem}%
\end{equation}
We write $\chi_{\theta}=(\chi_{1,\theta},\ldots,\chi_{d,\theta})$.

Under Regime $3$, we also impose the following condition.

\begin{condition}
\label{A:Assumption3} Under Regime $3$ and for any $\theta\in\Theta$
and $x\in\mathbb{R}^{d}$, we assume that the ordinary differential
equation
\begin{equation}
\dot{z}_{t}=c_{\theta}(x,z_{t})
\end{equation}
has a unique invariant measure that is Lipschitz continuous in $(\theta,x)\in\Theta\times\mathbb{R}^{d}$.
\end{condition}

Notice that the existence of a unique smooth invariant measure is immediately implied for Regimes $1$ and $2$ due to Condition \ref{A:Assumption1}. However, the situation is more complicated for Regime $3$, since the operator of interest is a first order operator, where clearly the non-degeneracy condition does not hold. For example Condition \ref{A:Assumption3} certainly holds in dimension $d=1$, when $c_{\theta}(x,y)>0$ everywhere and it is sufficiently smooth.

Before stating the main results, we need additional notation and definitions.
We borrow some notation from \cite{DupuisSpiliopoulos} and modify it to fit our needs.
\begin{definition}
\label{Def:ThreePossibleOperators} For the three possible Regimes $i=1,2,3$
defined in (\ref{Def:ThreePossibleRegimes}) and for $x\in\mathbb{R}^{d}%
,y\in\mathcal{Y}$, let
\begin{align}
\mathcal{L}_{x,\theta}^{1}  &  =b_{\theta}(x,y)\cdot\nabla_{y}+\frac{1}{2}\sigma
(x,y)\sigma(x,y)^{T}:\nabla_{y}\nabla_{y},\nonumber\\
\mathcal{L}_{x,\theta}^{2}  &  =\left[  \gamma b_{\theta}(x,y)+c_{\theta}(x,y)\right]
\cdot\nabla_{y}+\gamma\frac{1}{2}\sigma(x,y)\sigma(x,y)^{T}:\nabla_{y}%
\nabla_{y},\nonumber\\
\mathcal{L}_{x,\theta}^{3}  &  = c_{\theta}(x,y)  \cdot\nabla
_{y}.\nonumber
\end{align}
For $i=1,2$ we let $\mathcal{D}(\mathcal{L}_{x,\theta}^{i})=\mathcal{C}%
^{2}(\mathcal{Y})$ and for $i=3$, $\mathcal{D}(\mathcal{L}_{x,\theta}%
^{3})=\mathcal{C}^{1}(\mathcal{Y})$.
\end{definition}
We also define for Regime $i$ a function $\lambda_{i}(x,y)$, $i=1,2,3$, as follows.
\begin{definition}
\label{Def:ThreePossibleFunctions} For the three possible Regimes $i=1,2,3$
defined in (\ref{Def:ThreePossibleRegimes}) and for $x\in\mathbb{R}^{d}
,y\in\mathcal{Y}$, define $\lambda_{i}(x,y):\mathbb{R}%
^{d}\times\mathcal{Y}\rightarrow\mathbb{R}^{d}$ by
\begin{align}
\lambda_{1,\theta}(x,y)  &  =\left(  I+\frac{\partial\chi_{\theta}}{\partial y}(x,y)\right)
 c_{\theta}(x,y), \nonumber\\
\lambda_{2,\theta}(x,y)  &  =\gamma b_{\theta}(x,y)+c_{\theta}(x,y),\nonumber\\
\lambda_{3,\theta}(x,y)  &  =c_{\theta}(x,y),\nonumber
\end{align}
where $\chi_{\theta}=(\chi_{1,\theta},\ldots,\chi_{d,\theta})$ is defined by (\ref{Eq:CellProblem})
and $I$ is the identity matrix.
\end{definition}

Based on the results in \cite{DupuisSpiliopoulos}, we obtain the following theorem, which essentially is the law of large numbers for (\ref{Eq:LDPandA1}). Given the results in \cite{DupuisSpiliopoulos}, the additional steps required in order to prove this theorem are minimal, so we include a short proof in this section as well.
\begin{theorem}
\label{T:LLN} Consider any $x_{0}\in\mathbb{R}^{d}$ and any $T>0$. Assume Condition \ref{A:Assumption1}. In addition, in Regime 1 assume
Condition \ref{A:Assumption2} and under Regime 3 assume Condition \ref{A:Assumption3}. Then, for all $\theta\in\Theta$ and $\eta>0$ and for Regime $i=1,2,3$, we have
\begin{equation*}
\lim_{\epsilon\downarrow 0}\mathbb{P}_{\theta}\left[\sup_{0\leq t\leq T}\left|X^{\epsilon}_{t}-\bar{X}^{i}_{t}\right|>\eta\right]=0,
\end{equation*}
where for Regime $i$, $\bar{X}^{i}$ is the unique solution to the deterministic equation
\begin{equation}
\bar{X}^{i}_{t}=x_{0}+\int_{0}^{t}\int_{\mathcal{Y}}\lambda_{i,\theta}(\bar{X}^{i}_{s},y)\mu^{i}_{\theta}(dy;\bar{X}^{i}_{s})ds\label{Eq:LimitingODE}
\end{equation}
and $\mu^{i}_{\theta}(dy;x)$ is the invariant measure corresponding to the operator $\mathcal{L}_{x}^{i}$ from Definition \ref{Def:ThreePossibleOperators}.
\end{theorem}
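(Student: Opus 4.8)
The plan is to deduce the law of large numbers from the large deviation principle established in \cite{DupuisSpiliopoulos}. Under Conditions \ref{A:Assumption1}--\ref{A:Assumption3}, that reference shows that in each Regime $i=1,2,3$ the family $X^\epsilon$ satisfies a large deviation principle on $C([0,T];\mathbb{R}^d)$, at the standard small-noise speed $1/\epsilon$, with a good rate function $S_i$. For absolutely continuous $\phi$ with $\phi_0=x_0$ this rate function is of Lagrangian form $S_i(\phi)=\int_0^T L_{i,\theta}(\phi_t,\dot\phi_t)\,dt$, where $L_{i,\theta}(x,\cdot)$ is the Legendre transform of a local Hamiltonian $H_{i,\theta}(x,\cdot)$ built from the operator $\mathcal{L}^i_{x,\theta}$ and its invariant measure $\mu^i_\theta$ (and, for Regime $1$, the corrector), while $S_i(\phi)=+\infty$ otherwise. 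Since $S_i\geq 0$, it then suffices to show that $S_i$ vanishes at exactly one path, namely $\bar X^i$, and to apply the large deviation upper bound.

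First I would identify the unique zero of $S_i$. Because $L_{i,\theta}(x,\cdot)$ is convex and, thanks to $H_{i,\theta}(x,0)=0$, non-negative, it attains the value $0$ at the single point $\nabla_p H_{i,\theta}(x,0)$, and one computes $\nabla_p H_{i,\theta}(x,0)=\int_{\mathcal{Y}}\lambda_{i,\theta}(x,y)\,\mu^i_\theta(dy;x)=:r_{i,\theta}(x)$. Hence $S_i(\phi)=0$ if and only if $\dot\phi_t=r_{i,\theta}(\phi_t)$ for a.e.\ $t$ with $\phi_0=x_0$, that is, if and only if $\phi=\bar X^i$ as defined in (\ref{Eq:LimitingODE}). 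In Regimes $2$ and $3$ the velocity $\lambda_{i,\theta}$ is just the drift averaged against $\mu^i_\theta$; in Regime $1$ the crucial point, and the origin of the factor $I+\partial_y\chi_\theta$ in $\lambda_{1,\theta}$, is the cell problem (\ref{Eq:CellProblem}): the relation $\mathcal{L}^1_{x,\theta}\chi_\theta=-b_\theta$ absorbs the diverging $\tfrac{\epsilon}{\delta}b_\theta$ contribution, and the centering Condition \ref{A:Assumption2} ensures that this contribution averages to zero so that only the homogenized drift survives. This identification of the vanishing point of $L_{i,\theta}$ with the expression involving $\lambda_{i,\theta}$ and $\mu^i_\theta$ is the main step, and is where essentially all of the short additional work beyond \cite{DupuisSpiliopoulos} lies.

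Next I would check that $r_{i,\theta}(\cdot)$ is Lipschitz in $x$, so that (\ref{Eq:LimitingODE}) is well posed. This follows from the boundedness and Lipschitz continuity of the coefficients in Condition \ref{A:Assumption1}, together with the Lipschitz dependence of $\mu^i_\theta(dy;x)$ on $x$, which holds automatically in Regimes $1$ and $2$ by uniform ellipticity, is imposed in Regime $3$ by Condition \ref{A:Assumption3}, and in Regime $1$ also uses the smooth $x$-dependence of the corrector $\chi_\theta$ guaranteed by Theorem 3.3.4 in \cite{BLP}. Picard--Lindel\"of then yields a unique solution $\bar X^i$ of (\ref{Eq:LimitingODE}), which is the unique zero of $S_i$.

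Finally I would conclude through the upper bound. The set $A_\eta=\{\phi\in C([0,T];\mathbb{R}^d):\sup_{0\leq t\leq T}|\phi_t-\bar X^i_t|\geq\eta\}$ is closed and does not contain $\bar X^i$. Since $S_i$ is a good rate function whose unique zero is $\bar X^i$, a standard lower-semicontinuity and compactness argument on the level sets gives $c_\eta:=\inf_{\phi\in A_\eta}S_i(\phi)>0$. The large deviation upper bound then yields $\limsup_{\epsilon\downarrow 0}\epsilon\log\mathbb{P}_\theta[\sup_{0\leq t\leq T}|X^\epsilon_t-\bar X^i_t|\geq\eta]\leq -c_\eta<0$, so the probability tends to $0$, which is exactly the asserted law of large numbers. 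The hard part throughout is the identification carried out in the second paragraph, and in particular the Regime $1$ homogenization via the cell problem; the remaining steps are soft consequences of the large deviation principle.
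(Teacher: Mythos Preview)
Your approach is correct but takes a genuinely different route from the paper's. The paper's proof is a two-line argument: it invokes Theorem~2.8 of \cite{DupuisSpiliopoulos}, which already gives weak convergence of $X^{\epsilon}$ to the deterministic path $\bar X^{i}$ in $C([0,T])$, and then notes that weak convergence to a constant implies convergence in probability. No analysis of a rate function is needed, and the identification of the limiting ODE is simply read off from the cited weak-convergence result.

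You instead pull the law of large numbers out of the large deviation principle of \cite{DupuisSpiliopoulos}: you locate the unique zero of the good rate function $S_i$ by computing $\nabla_p H_{i,\theta}(x,0)$, verify it coincides with the averaged drift $r_{i,\theta}(x)=\int_{\mathcal Y}\lambda_{i,\theta}(x,y)\,\mu^i_\theta(dy;x)$, and then apply the upper bound on the closed set $A_\eta$. This is a legitimate and standard route from an LDP to an LLN, and it even yields the stronger conclusion that $\mathbb{P}_\theta[\sup_t|X^\epsilon_t-\bar X^i_t|\geq\eta]$ decays exponentially in $1/\epsilon$. The cost is that you must carry out the identification of the minimizer of $L_{i,\theta}(x,\cdot)$ with $r_{i,\theta}(x)$, which---as you note---is where the real work lies, particularly in Regime~1 via the corrector $\chi_\theta$. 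The paper sidesteps this entirely by citing weak convergence directly. One minor point: your claim that $L_{i,\theta}(x,\cdot)$ vanishes at a \emph{single} point relies on differentiability of $H_{i,\theta}(x,\cdot)$ at $p=0$; this does hold here thanks to the uniform nondegeneracy of $\sigma\sigma^T$, but it is worth stating explicitly rather than leaving implicit.
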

\begin{proof}
Under our assumptions, Theorem 2.8 in \cite{DupuisSpiliopoulos} guarantees weak convergence of $X^{\epsilon}_{\cdot}$ to $\bar{X}^{i}_{\cdot}$ in $\mathcal{C}([0,T])$ for any $T>0$.  Since, the limiting process $\bar{X}^{i}_{t}$ is deterministic and weak convergence to constants implies convergence in probability, we obtain the claim of the theorem. Also, due to our assumptions, the limiting ODE's in (\ref{Eq:LimitingODE}) are well defined and have a unique solution in their corresponding regime.
\end{proof}


\section{Maximum likelihood estimation}\label{S:MLE}

Assume that we observe the process $X^{\epsilon}$ in continuous time and denote by  $\mathcal{X}_{T}\doteq \left\{x_{t},0\leq t\leq T\right\}$  the data we obtain. The log-likelihood function for estimating the parameter $\theta$ in the statistical model (\ref{Eq:LDPandA1}) can be expressed as follows
\begin{equation}
Z_{\theta,T}^{\epsilon}(\mathcal{X}_{T})=
\int_{0}^{T}\left<\frac{\epsilon}{\delta}b_{\theta}+c_{\theta}, dx_{s}\right>_{\alpha}\left(x_{s},\frac{x_{s}}{\delta}\right)  -
\frac{1}{2}\int_{0}^{T}\left\Vert \frac{\epsilon}{\delta}b_{\theta}+c_{\theta}\right\Vert^{2}_{\alpha}\left(x_{s},\frac{x_{s}}{\delta}\right) ds,
\label{Eq:LikelihoodFunction}
\end{equation}
where we denote $\alpha(x,y)=\sigma\sigma^{T}(x,y)$ and for any positive definite matrix $K$
\begin{equation*}
\left<p,q\right>_{K}\doteq\left(K^{-1/2}p,K^{-1/2}q\right)\quad \textrm{ and } \left\Vert p\right\Vert_{K}^{2}\doteq\left<p,p\right>_{K}
\end{equation*}
\begin{remark}
The notation used in (\ref{Eq:LikelihoodFunction}) is slightly unusual and the brackets $(x_{t},x_{t}/\delta)$ outside of the integral are the integrand variables. This notation is chosen for presentation purposes only, since if we used the arguments to each  function in the stochastic integrals, this would result in long and complicated-looking formulas.
\end{remark}
Sometimes, we will omit the subscript $K$ if $K=I$. Essentially, we define the likelihood function as the Radon-Nikodym derivative
\begin{equation*}
\frac{d\mathbb{P}_{\theta}}{d\mathbb{P}_{\theta}^{*}} = \exp{\{\frac{1}{\epsilon}Z_{\theta,T}^{\epsilon}(\mathcal{X}_{T})\}},
\end{equation*}
where $\mathbb{P}_{\theta}$ is the measure for (\ref{Eq:LDPandA1}) and $\mathbb{P}_{\theta}^{*}$ the measure for (\ref{Eq:LDPandA1}) when the drift term is equal to zero.
Therefore, for fixed $\epsilon,\delta$, we define the maximum likelihood estimator (MLE) of $\theta$ to be
\begin{equation*}
\hat{\theta}^{\epsilon}\doteq\text{argmax}_{\theta\in\Theta} Z_{\theta,T}^{\epsilon}(\mathcal{X}_{T}).
\end{equation*}

The presence of the small parameters $\epsilon$ and $\delta$ complicate the estimation of $\theta$ significantly. Our approach is to find the limiting likelihood (in the appropriate sense) for each Regime $i=1,2,3$, that is
\begin{equation*}
\bar{Z}^{i}_{\theta,T}(\bar{X}^{i}_{\cdot})=\lim_{\epsilon\downarrow 0}Z_{\theta,T}^{\epsilon}(\mathcal{X}_{T}).
\end{equation*}
Then, we prove consistency and derive asymptotic properties of the MLE $\hat{\theta}^{\epsilon}$, by studying properties of the prelimiting log-likelihood $Z_{\theta,T}^{\epsilon}(\mathcal{X}_{T})$ and of the limiting log-likelihood $\bar{Z}^{i}_{\theta,T}(\bar{X}^{i}_{\cdot})$.

In particular, as we shall see in Section \ref{S:Averaging}, based on the analysis of the log-likelihood function (\ref{Eq:LikelihoodFunction}) we  prove that the MLE is a consistent estimator of the true value $\theta_{0}$, under Regime $1$ with $b=0$ and  Regimes $2$ and $3$. Under the same framework, we also prove, in Section \ref{S:CLT}, that the MLE  $\hat{\theta}^{\epsilon}$ is asymptotically normal.

On the other hand, as we shall see in Section \ref{S:Homogenization}, things get more complicated under Regime $1$ when  $b\neq 0$. In this case, the likelihood function
(\ref{Eq:LikelihoodFunction}) does not necessarily have a well defined limit due to the terms that are multiplied by $\epsilon/\delta$ (recall that in this case
$\epsilon/\delta\uparrow \infty$ as $\epsilon\downarrow 0$). We choose to resolve this issue, by taking the limit in an appropriately re-scaled and centered version of the original
 log-likelihood (a pseudo log-likelihood). Under certain conditions, this pseudo log-likelihood approach overcomes the convergence issue and a well defined limit exists. However,
the pseudo maximum likelihood estimator is consistent, even though the ``bias'' is explicitly characterized.

The consistency issue of  the maximum likelihood estimation in the presence of ``unbounded drift terms'',  such as the term
$\frac{\epsilon}{\delta}\int_{0}^{t}b\left(X^{\epsilon}_{s},\frac{X^{\epsilon}_{s}}{\delta}\right)ds$ with $\epsilon/\delta\uparrow\infty$ is well known in the literature.
In the context of $\epsilon=1$ and $\delta\downarrow 0$, which corresponds to Regime $1$, the problem has also been studied in  \cite{AzencottBeriTimofeyev2010a, AzencottBeriTimofeyev2010b,KrumscheidPavliotisKalliadasis2011, PaviotisStuart2007, PapavasileiouPaviotisStuart2009}
under different scenarios and conditions  and it is shown there that the maximum likelihood estimator is not consistent and one may need to result in sub-sampling
 of the data at appropriate rates in order to produce consistent estimators.   In the case that has been
 studied in \cite{AzencottBeriTimofeyev2010b,PapavasileiouPaviotisStuart2009} the issue was treated with appropriate sub-sampling of the data.
The article \cite{KrumscheidPavliotisKalliadasis2011} followed a semi-parametric approach assuming a special structure of the coefficients.
In this work we do not address the consistency issue.   Nevertheless,
 we provide an explicit formula for the asymptotic error in the transformed log-likelihood function. Moreover, we apply our results to the case of small noise diffusion in a two-scale
 potential field, see Section \ref{S:Examples}. In this case, even though, the original estimator is not consistent, we can construct
a consistent estimator and also derive a central limit theorem for the proposed estimator.


\section{Limiting Likelihood}\label{S:LimitngLikelihood}

We first study the limiting likelihood for Regime 1 when $b=0$ and for Regimes 2, 3 and then the proposed pseudo limiting likelihood for Regime 1 when $b\neq 0$.


\subsection{Limiting Likelihood for Regime $\mathbf{1}$ when $\mathbf{b=0}$ and for Regimes $\mathbf{2}$, $\mathbf{3}$.}\label{S:Averaging}
In this section, we  consider the limit of the likelihood function $Z_{\theta,T}^{\epsilon}(\mathcal{X}_{T})$, defined by (\ref{Eq:LikelihoodFunction}) for Regimes 1
when $b=0$ and for Regimes $2$ and $3$.

Let us define the following functions
\begin{definition}
\label{Def:ThreePossibleLikelihoods} For $z_{T}\doteq \left\{x_{t},0\leq t\leq T\right\}$, $x\in\mathbb{R}^{d}
,y\in\mathcal{Y}$ and for the three possible Regimes $i=1,2,3$
defined in (\ref{Def:ThreePossibleRegimes}), define
\begin{align}
\bar{Z}^{1}_{\theta,\theta_{0},T}(z_{\cdot})  &  =\int_{0}^{T}\int_{\mathcal{Y}}\left< c_{\theta},c_{\theta_{0}}\right>_{\alpha}\left(x_{s},y\right)\mu^{1}_{\theta_{0}}(dy;x_{s})ds
-\frac{1}{2}\int_{0}^{T}\int_{\mathcal{Y}}\left\Vert c_{\theta}\right\Vert^{2}_{\alpha}\left(x_{s},y\right)\mu^{1}_{\theta_{0}}(dy;x_{s})ds,\nonumber\\
\bar{Z}^{2}_{\theta,\theta_{0},T}(z_{\cdot})  &  =\int_{0}^{T}\int_{\mathcal{Y}}\left< \gamma b_{\theta}+c_{\theta},\gamma b_{\theta_{0}}+c_{\theta_{0}}\right>_{\alpha}\left(x_{s},y\right)\mu^{2}_{\theta_{0}}(dy;x_{s})ds
-\frac{1}{2}\int_{0}^{T}\int_{\mathcal{Y}}\left\Vert \gamma b_{\theta}+c_{\theta}\right\Vert^{2}_{\alpha}\left(x_{s},y\right)\mu^{2}_{\theta_{0}}(dy;x_{s})ds,\nonumber\\
\bar{Z}^{3}_{\theta,\theta_{0},T}(z_{\cdot})  &  =\int_{0}^{T}\int_{\mathcal{Y}}\left< c_{\theta},c_{\theta_{0}}\right>_{\alpha}\left(x_{s},y\right)\mu^{3}_{\theta_{0}}(dy;x_{s})ds
-\frac{1}{2}\int_{0}^{T}\int_{\mathcal{Y}}\left\Vert c_{\theta}\right\Vert^{2}_{\alpha}\left(x_{s},y\right)\mu^{3}_{\theta_{0}}(dy;x_{s})ds.\nonumber
\end{align}
\end{definition}
We then prove the following Theorem
\begin{theorem}\label{T:LikelihoodConvergence1}
Let the assumptions of Theorem \ref{T:LLN} hold. Let $\mathcal{X}_{T}=\left\{x_{t},0\leq t\leq T\right\}$ be a sample path of (\ref{Eq:LDPandA1}) at $\theta=\theta_{0}$. In the case of Regime $1$ we assume that $b_{\theta}=0$. Then, under Regime $i=1,2,3$, the sequence $\left\{Z_{\theta,T}^{\epsilon}, \epsilon>0\right\}$ converges in $\mathbb{P}_{\theta_{0}}$ probability, uniformly in $\theta\in\Theta$ to $\bar{Z}^{i}_{\theta,\theta_{0},T}\left(\bar{X}^{i}_{\cdot}\right)$ from Definition \ref{Def:ThreePossibleLikelihoods} and where $\bar{X}^{i}_{t}$ is the solution to the corresponding limiting ODE from Theorem \ref{T:LLN}. In particular, for any $\eta>0$
\begin{equation*}
\lim_{\epsilon\downarrow 0}\mathbb{P}_{\theta_{0}}\left[\sup_{\theta\in\Theta}\left|Z_{\theta,T}^{\epsilon}(\mathcal{X}_{T})-\bar{Z}^{i}_{\theta,\theta_{0},T}\left(\bar{X}^{i}_{\cdot}\right)\right|>\eta\right]=0.
\end{equation*}
Lastly, in each regime, the function $\bar{Z}^{i}_{\theta,\theta_{0},T}$ is maximized at $\theta=\theta_{0}$.
\end{theorem}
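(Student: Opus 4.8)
The plan is to prove the theorem in two stages: the uniform-in-probability convergence of the log-likelihood, and then the (elementary) identification of the maximizer. Throughout write $g^{\epsilon}_{\theta}=\frac{\epsilon}{\delta}b_{\theta}+c_{\theta}$ and, for the regime at hand, let $f_{i,\theta}$ denote the integrand appearing in Definition~\ref{Def:ThreePossibleLikelihoods}, i.e.\ $f_{1,\theta}=f_{3,\theta}=c_{\theta}$ and $f_{2,\theta}=\gamma b_{\theta}+c_{\theta}$. Because $\mathcal{X}_{T}$ is generated at $\theta_{0}$, I would first substitute $dx_{s}=g^{\epsilon}_{\theta_{0}}(x_{s},x_{s}/\delta)\,ds+\sqrt{\epsilon}\,\sigma(x_{s},x_{s}/\delta)\,dW_{s}$ into the stochastic-integral term of (\ref{Eq:LikelihoodFunction}), which splits the log-likelihood as
\[
Z^{\epsilon}_{\theta,T}=\int_{0}^{T}\langle g^{\epsilon}_{\theta},g^{\epsilon}_{\theta_{0}}\rangle_{\alpha}\,ds-\frac{1}{2}\int_{0}^{T}\|g^{\epsilon}_{\theta}\|^{2}_{\alpha}\,ds+\sqrt{\epsilon}\int_{0}^{T}\langle g^{\epsilon}_{\theta},\sigma\,dW_{s}\rangle_{\alpha},
\]
all integrands evaluated at $(x_{s},x_{s}/\delta)$. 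The key point is that in the three regimes covered here the prefactor $\epsilon/\delta$ is harmless: for Regime~1 we have $b_{\theta}\equiv 0$ so $g^{\epsilon}_{\theta}=c_{\theta}$ exactly, while for Regimes~2 and~3 we have $\epsilon/\delta\to\gamma$ (resp.\ $0$), so $g^{\epsilon}_{\theta}-f_{i,\theta}=(\epsilon/\delta-\gamma)b_{\theta}$ (resp.\ $(\epsilon/\delta)b_{\theta}$) tends to zero uniformly in $(x,y,\theta)$ by boundedness of $b$.

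The first two terms are additive functionals $\int_{0}^{T}h_{\theta}(x_{s},x_{s}/\delta)\,ds$ of the joint slow--fast process, with $h_{\theta}$ bounded and periodic in the second variable. These are exactly the functionals controlled by the averaging/homogenization analysis of \cite{DupuisSpiliopoulos} underlying Theorem~\ref{T:LLN}: as $\epsilon\downarrow 0$ the slow coordinate converges to $\bar{X}^{i}_{\cdot}$ while the fast coordinate $x_{s}/\delta$ equidistributes according to $\mu^{i}_{\theta_{0}}(dy;\cdot)$, giving $\int_{0}^{T}h_{\theta}(x_{s},x_{s}/\delta)\,ds\to\int_{0}^{T}\int_{\mathcal{Y}}h_{\theta}(\bar{X}^{i}_{s},y)\,\mu^{i}_{\theta_{0}}(dy;\bar{X}^{i}_{s})\,ds$ in $\mathbb{P}_{\theta_{0}}$-probability. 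Replacing $g^{\epsilon}_{\theta}$ by its limit $f_{i,\theta}$ (with $o(1)$ error by the previous paragraph) and applying this to $h_{\theta}=\langle f_{i,\theta},f_{i,\theta_{0}}\rangle_{\alpha}$ and $h_{\theta}=\|f_{i,\theta}\|^{2}_{\alpha}$ yields, for each fixed $\theta$, convergence of the first two terms to $\bar{Z}^{i}_{\theta,\theta_{0},T}(\bar{X}^{i}_{\cdot})$. The martingale term is killed by its own prefactor: since $MM^{T}=I$ for $M=\alpha^{-1/2}\sigma$, its quadratic variation equals $\epsilon\int_{0}^{T}|\alpha^{-1/2}g^{\epsilon}_{\theta}|^{2}\,ds$, which is bounded by uniform nondegeneracy of $\alpha$ and boundedness of $g^{\epsilon}_{\theta}$ (Condition~\ref{A:Assumption1}); hence by Doob's inequality it is $O_{\mathbb{P}}(\sqrt{\epsilon})\to 0$. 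This ergodic averaging step is where the real work lies: the single-$\theta$ statement is essentially inherited from \cite{DupuisSpiliopoulos}, so the genuinely new content is the uniformity in $\theta$ together with the careful bookkeeping of the $\epsilon/\delta$ prefactor across the three regimes.

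To upgrade the pointwise-in-$\theta$ convergence to the uniform statement, I would exploit the Lipschitz dependence on $\theta$. By the Lipschitz-in-$\theta$ hypothesis in Condition~\ref{A:Assumption1}, $b_{\theta}$ and $c_{\theta}$, hence $g^{\epsilon}_{\theta}$ and the integrands $h_{\theta}$, are Lipschitz in $\theta$ with constants uniform in $(x,y)$ (using boundedness). Since $\Theta$ is bounded, both $\theta\mapsto Z^{\epsilon}_{\theta,T}$ and $\theta\mapsto\bar{Z}^{i}_{\theta,\theta_{0},T}$ are (stochastically) equicontinuous, and equicontinuity together with pointwise convergence on a bounded parameter set yields uniform convergence in probability. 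The same Lipschitz bound, applied to the martingale term over a finite $\eta$-net of $\Theta$ followed by a maximal inequality and interpolation, controls that term uniformly in $\theta$ as well.

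Finally, identifying the maximizer is an elementary completion of the square and is the easy part. Each limit has the form $\bar{Z}^{i}_{\theta,\theta_{0},T}=\int_{0}^{T}\int_{\mathcal{Y}}\big(\langle f_{i,\theta},f_{i,\theta_{0}}\rangle_{\alpha}-\tfrac{1}{2}\|f_{i,\theta}\|^{2}_{\alpha}\big)\,\mu^{i}_{\theta_{0}}(dy;\bar{X}^{i}_{s})\,ds$, and the pointwise algebraic identity $\langle p,q\rangle_{\alpha}-\tfrac{1}{2}\|p\|^{2}_{\alpha}=\tfrac{1}{2}\|q\|^{2}_{\alpha}-\tfrac{1}{2}\|p-q\|^{2}_{\alpha}$ lets me rewrite
\[
\bar{Z}^{i}_{\theta,\theta_{0},T}=\frac{1}{2}\int_{0}^{T}\!\!\int_{\mathcal{Y}}\|f_{i,\theta_{0}}\|^{2}_{\alpha}\,\mu^{i}_{\theta_{0}}\,ds-\frac{1}{2}\int_{0}^{T}\!\!\int_{\mathcal{Y}}\|f_{i,\theta}-f_{i,\theta_{0}}\|^{2}_{\alpha}\,\mu^{i}_{\theta_{0}}\,ds.
\]
The first integral does not depend on $\theta$; the second is nonnegative because $\alpha=\sigma\sigma^{T}$ is positive definite (uniform nondegeneracy in Condition~\ref{A:Assumption1}) and vanishes at $\theta=\theta_{0}$. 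Hence $\bar{Z}^{i}_{\theta,\theta_{0},T}$ attains its maximum at $\theta=\theta_{0}$, which proves the last assertion.
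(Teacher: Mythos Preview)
Your proof is correct and follows essentially the same route as the paper: substitute $dx_{s}$ to split $Z^{\epsilon}_{\theta,T}$ into a Riemann-integral part and a $\sqrt{\epsilon}$-martingale part, handle the former by the averaging principle together with Theorem~\ref{T:LLN} and the latter by a martingale inequality (the paper uses BDG where you use Doob), then complete the square to identify the maximizer. Your writeup is in fact more explicit than the paper's on two points the paper leaves terse---the $\epsilon/\delta$ bookkeeping across regimes and the equicontinuity argument upgrading pointwise-in-$\theta$ convergence to uniform---but the architecture is the same.
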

\begin{remark}
Here, $\bar{X}^{i}_{\cdot}=\bar{X}^{i}_{\cdot}(\theta_0)$, i.e., the parameter value is $\theta=\theta_{0}$. However, throughout this section we use the compact notation $\bar{X}^{i}_{\cdot}$ instead of $\bar{X}^{i}_{\cdot}(\theta_0)$ for presentation purposes only, in order to simplify the formulas.
\end{remark}

\begin{proof}
Since $\mathcal{X}_{T}=\left\{x_{t},0\leq t\leq T\right\}$ is a sample path of (\ref{Eq:LDPandA1}) at $\theta=\theta_{0}$, we get that
\begin{eqnarray}
Z_{\theta,T}^{\epsilon}(\mathcal{X}_{T})&=&
\int_{0}^{T}\left<\frac{\epsilon}{\delta}b_{\theta}+c_{\theta}, dx_{s}\right>_{\alpha}\left(x_{s},\frac{x_{s}}{\delta}\right)-
\frac{1}{2}\int_{0}^{T}\left\Vert \frac{\epsilon}{\delta}b_{\theta}+c_{\theta}\right\Vert^{2}_{\alpha}\left(x_{s},\frac{x_{s}}{\delta}\right) ds\nonumber\\
&=& I^{1,\epsilon}_{T}+\sqrt{\epsilon}I^{2,\epsilon}_{T},\nonumber
\end{eqnarray}
where
\begin{equation*}
I^{1,\epsilon}_{T}=\int_{0}^{T}\left<\frac{\epsilon}{\delta}b_{\theta}+c_{\theta}, \frac{\epsilon}{\delta}b_{\theta_{0}}+c_{\theta_{0}}\right>_{\alpha}\left(x_{s},\frac{x_{s}}{\delta}\right)ds-
\frac{1}{2}\int_{0}^{T}\left\Vert \frac{\epsilon}{\delta}b_{\theta}+c_{\theta}\right\Vert^{2}_{\alpha}\left(x_{s},\frac{x_{s}}{\delta}\right) ds
\end{equation*}
and
\begin{equation*}
I^{2,\epsilon}_{T}=\int_{0}^{T}\left<\frac{\epsilon}{\delta}b_{\theta}+c_{\theta}, \sigma dW_{s}\right>_{\alpha}\left(x_{s},\frac{x_{s}}{\delta}\right).
\end{equation*}
Then standard averaging principle for locally periodic diffusions, see Chapter 3 of \cite{BLP}, and  the fact that the corresponding invariant measure $\mu^{i}_{\theta}(dy;x)$ is continuous as a function of $x$ and Theorem \ref{T:LLN} imply that for any $p\geq 1$
\begin{equation*}
\mathbb{E}\left(I^{1,\epsilon}_{T}-\bar{Z}^{i}_{\theta,\theta_{0},T}\left(\bar{X}^{i}_{\cdot}\right)\right)^{p}\rightarrow 0, \text{ as }\epsilon\downarrow 0.
\end{equation*}
Moreover, the Burkholder-Davis-Gundy inequality \cite{KaratzasShreve} applied to the stochastic integral $\sqrt{\epsilon}I^{2,\epsilon}_{t}$ and Condition \ref{A:Assumption1} imply
\begin{equation*}
\mathbb{E}\sup_{0\leq t\leq T}|\sqrt{\epsilon}I^{2,\epsilon}_{t}|^{p}\leq C \epsilon^{p/2},
\end{equation*}
for some constant $C>0$, uniformly in $\theta\in\Theta$.

Thus, the proof of the claimed convergence follows by Chebyschev's inequality and the uniform convergence in $\theta\in\Theta$. The fact that the limit is maximized at $\theta=\theta_{0}$ is easily seen to hold by completing the square in the expressions for $\bar{Z}^{i}_{\theta,\theta_{0},T}$ at Definition \ref{Def:ThreePossibleLikelihoods}. For example, in the case of Regime 1, it is easy to see that
\begin{equation*}
\bar{Z}^{1}_{\theta,\theta_{0},T}(z_{\cdot})   =
\frac{1}{2}\int_{0}^{T}\int_{\mathcal{Y}}\left\Vert c_{\theta_{0}}\right\Vert^{2}_{\alpha}\left(x_{s},y\right)\mu^{1}_{\theta_{0}}(dy;x_{s})ds-
\frac{1}{2}\int_{0}^{T}\int_{\mathcal{Y}}\left\Vert c_{\theta}-c_{\theta_{0}}\right\Vert^{2}_{\alpha}\left(x_{s},y\right)\mu^{1}_{\theta_{0}}(dy;x_{s})ds
\end{equation*}
and thus the maximum is easily seen to be attained at $\theta=\theta_{0}$. Similarly for Regimes $2$ and $3$.
\end{proof}

Before we continue, we need to impose the following identifiability condition for the true value of the parameter $\theta$.
\begin{condition}\label{cond:identifiability}
For all $\eta>0$,
\begin{equation*}
\sup_{u: |u|>\eta} \left\{ \bar{Z}^{i}_{\theta_{0} + u,T}(\bar{X}^{i}_{\cdot}) - \bar{Z}^{i}_{\theta_{0},T}(\bar{X}^{i}_{\cdot}) \right\} \leq -\eta <0.\\
\end{equation*}
\end{condition}

\begin{theorem}
Let $\hat{\theta}^{\epsilon}\doteq\text{argmax}_{\theta\in\Theta} \;Z_{\theta,T}^{\epsilon}(\mathcal{X}_{T})$. Under Condition \ref{cond:identifiability} and the
assumptions of Theorem \ref{T:LikelihoodConvergence1}, the MLE sequence $\left\{\hat{\theta}^{\epsilon},\epsilon>0\right\}$  converges in $\mathbb{P}_{\theta_{0}}$ probability to the true parameter. In particular, for any $\eta>0$ we have
\begin{equation}
\lim_{\epsilon\downarrow 0}\mathbb{P}_{\theta_{0}}\left[\left|\hat{\theta}^{\epsilon}-\theta_{0}\right|>\eta\right]=0.
\end{equation}
\end{theorem}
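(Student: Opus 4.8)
The plan is to run the standard argmax-consistency (Wald-type) argument for M-estimators, which turns the two facts already in hand---the uniform convergence of Theorem~\ref{T:LikelihoodConvergence1} and the identifiability of Condition~\ref{cond:identifiability}---into the stated convergence in probability. The three ingredients I would combine are: the uniform-in-$\theta$ convergence in $\mathbb{P}_{\theta_{0}}$-probability of $Z^{\epsilon}_{\theta,T}(\mathcal{X}_{T})$ to the deterministic limit $\bar{Z}^{i}_{\theta,\theta_{0},T}(\bar{X}^{i}_{\cdot})$; the defining inequality $Z^{\epsilon}_{\hat{\theta}^{\epsilon},T}(\mathcal{X}_{T})\geq Z^{\epsilon}_{\theta_{0},T}(\mathcal{X}_{T})$ coming from $\hat{\theta}^{\epsilon}$ being a maximizer; and the fact that $\theta_{0}$ is a \emph{well-separated} maximizer of the limit, quantified by Condition~\ref{cond:identifiability}.

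Concretely, I would fix $\eta>0$ and read off from Condition~\ref{cond:identifiability} the gap: every $\theta$ with $|\theta-\theta_{0}|>\eta$ satisfies $\bar{Z}^{i}_{\theta_{0},\theta_{0},T}(\bar{X}^{i}_{\cdot})-\bar{Z}^{i}_{\theta,\theta_{0},T}(\bar{X}^{i}_{\cdot})\geq\eta$. Then I would introduce the good event $A_{\epsilon}=\{\sup_{\theta\in\Theta}|Z^{\epsilon}_{\theta,T}(\mathcal{X}_{T})-\bar{Z}^{i}_{\theta,\theta_{0},T}(\bar{X}^{i}_{\cdot})|\leq\eta/3\}$, which by Theorem~\ref{T:LikelihoodConvergence1} has $\mathbb{P}_{\theta_{0}}(A_{\epsilon})\to1$. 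On $A_{\epsilon}$, the decomposition
\[
\bar{Z}^{i}_{\theta_{0},\theta_{0},T}-\bar{Z}^{i}_{\hat{\theta}^{\epsilon},\theta_{0},T}
=\big(\bar{Z}^{i}_{\theta_{0},\theta_{0},T}-Z^{\epsilon}_{\theta_{0},T}\big)
+\big(Z^{\epsilon}_{\theta_{0},T}-Z^{\epsilon}_{\hat{\theta}^{\epsilon},T}\big)
+\big(Z^{\epsilon}_{\hat{\theta}^{\epsilon},T}-\bar{Z}^{i}_{\hat{\theta}^{\epsilon},\theta_{0},T}\big)
\leq\tfrac{\eta}{3}+0+\tfrac{\eta}{3}<\eta,
\]
where the middle term is nonpositive because $\hat{\theta}^{\epsilon}$ maximizes $Z^{\epsilon}_{\cdot,T}(\mathcal{X}_{T})$, and the outer two are bounded by $\eta/3$ on $A_{\epsilon}$. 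This contradicts the gap $\geq\eta$ that $|\hat{\theta}^{\epsilon}-\theta_{0}|>\eta$ would force, so on $A_{\epsilon}$ one necessarily has $|\hat{\theta}^{\epsilon}-\theta_{0}|\leq\eta$. Hence $\mathbb{P}_{\theta_{0}}[|\hat{\theta}^{\epsilon}-\theta_{0}|>\eta]\leq\mathbb{P}_{\theta_{0}}(A_{\epsilon}^{c})\to0$, which is exactly the claim.

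The only points needing care beyond this routine chain are bookkeeping ones, and I do not expect a genuine obstacle. First, one must ensure a measurable selection $\hat{\theta}^{\epsilon}=\text{argmax}_{\theta\in\Theta}Z^{\epsilon}_{\theta,T}$ exists; this is guaranteed since, by Condition~\ref{A:Assumption1}, $\Theta$ is bounded and $\theta\mapsto Z^{\epsilon}_{\theta,T}(\mathcal{X}_{T})$ is continuous, so a maximizer is attained over $\bar{\Theta}$ and a measurable selection can be chosen. Second, one must keep straight that the limiting trajectory $\bar{X}^{i}_{\cdot}=\bar{X}^{i}_{\cdot}(\theta_{0})$ is the one driven by the true parameter, exactly as in Theorem~\ref{T:LikelihoodConvergence1}. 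All the analytic content---the averaging/homogenization limit of the stochastic and Lebesgue integrals defining $Z^{\epsilon}_{\theta,T}$, uniformly in $\theta$---has already been absorbed into Theorem~\ref{T:LikelihoodConvergence1}, so the present statement is the soft deterministic consequence of uniform convergence together with a well-separated maximizer.
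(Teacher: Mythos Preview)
Your proposal is correct and follows essentially the same Wald-type argmax argument as the paper. The paper phrases it via the inclusion $\{|\hat{\theta}^{\epsilon}-\theta_{0}|>\eta\}\subseteq\{\sup_{|u|>\eta}(Z^{\epsilon}_{u+\theta_{0}}-Z^{\epsilon}_{\theta_{0}})\geq 0\}$ and then a chain of probability inequalities split at the $\eta/2$ level, while you work directly with the maximizer $\hat{\theta}^{\epsilon}$ on a good event at the $\eta/3$ level; the underlying decomposition and use of Theorem~\ref{T:LikelihoodConvergence1} plus Condition~\ref{cond:identifiability} are identical.
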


\begin{proof}
For all $\eta>0$, we have that
\begin{align*}
& \mathbb{P}_{\theta_{0}} \left[ \left|\hat{\theta}^{\epsilon} -\theta_{0}\right| >\eta \right] \leq
\mathbb{P}_{\theta_{0}} \left[ \sup_{|u|>\eta} \left( Z_{u+\theta_{0}}^{\epsilon}(\mathcal{X}_{T}) - Z_{\theta_{0}}^{\epsilon}(\mathcal{X}_{T}) \right) \geq 0 \right]\\
 \leq  & \;\;\mathbb{P}_{\theta_{0}} \Biggl[ \sup_{|u|>\eta}  \left( \left( Z_{u+\theta_{0}}^{\epsilon}(\mathcal{X}_{T}) - Z_{\theta_{0}}^{\epsilon}(\mathcal{X}_{T}) \right) - \left( \bar{Z}_{u+\theta_{0}}^{i}\left(\bar{X}^{i}_{\cdot}\right) - \bar{Z}_{\theta_{0}}^{i}\left(\bar{X}^{i}_{\cdot}\right) \right) \right) \\
 & \qquad \qquad \qquad \qquad \qquad \qquad \qquad \qquad \qquad \geq -\sup_{|u|>\eta} \left(\bar{Z}^{i}_{\theta_{0}+u} \left(\bar{X}^{i}_{\cdot}\right) - \bar{Z}^{i}_{\theta_{0}}\left(\bar{X}^{i}_{\cdot}\right)\right) \Biggr].
\end{align*}
Condition \ref{cond:identifiability} gives that
\begin{align*}
& \mathbb{P}_{\theta_{0}} \Biggl[ \sup_{|u|>\eta}  \left( \left( Z_{u+\theta_{0}}^{\epsilon}(\mathcal{X}_{T}) - Z_{\theta_{0}}^{\epsilon}(\mathcal{X}_{T}) \right) - \left( \bar{Z}_{u+\theta_{0}}^{i} \left(\bar{X}^{i}_{\cdot}\right)- \bar{Z}_{\theta_{0}}^{i} \left(\bar{X}^{i}_{\cdot}\right)\right) \right) \\
&  \qquad \qquad \qquad \qquad \quad \qquad \qquad \quad \quad \quad \qquad \geq -\sup_{|u|>\eta} \left(\bar{Z}^{i}_{u+\theta_{0}} \left(\bar{X}^{i}_{\cdot}\right) - \bar{Z}^{i}_{\theta_{0}}\left(\bar{X}^{i}_{\cdot}\right)\right) \Biggr] \\
\leq & \;\;\mathbb{P}_{\theta_{0}} \left[ \sup_{|u|>\eta}  \left( \left( Z_{u+\theta_{0}}^{\epsilon}(\mathcal{X}_{T}) - \bar{Z}_{u+\theta_{0}}^{i}\left(\bar{X}^{i}_{\cdot}\right)  \right) - \left( Z_{\theta_{0}}^{\epsilon}(\mathcal{X}_{T}) - \bar{Z}_{\theta_{0}}^{i}\left(\bar{X}^{i}_{\cdot}\right)  \right) \right) \geq \eta >0 \right].
\end{align*}
Therefore, by conditioning on $\left\{\left| Z_{\theta_{0}}^{\epsilon}(\mathcal{X}_{T}) - \bar{Z}_{\theta_{0}}^{i}\left(\bar{X}^{i}_{\cdot}\right)\right| \geq \frac{1}{2}\eta \right\}$ we have
\begin{equation*}
\mathbb{P}_{\theta_{0}} \left[ \left|\hat{\theta}^{\epsilon} -\theta_{0}\right| >\eta \right] \leq  \;\;
\mathbb{P}_{\theta_{0}} \left[\sup_{|u|>\eta} \left( Z_{u+\theta_{0}}^{\epsilon} - \bar{Z}_{u+\theta_{0}}^{i}\right) \geq \frac{1}{2}\eta   >0 \right] + \mathbb{P}_{\theta_{0}} \left[ \left| Z_{\theta_{0}}^{\epsilon} - \bar{Z}_{\theta_{0}}^{i}\right| \geq \frac{1}{2}\eta >0 \right].
\end{equation*}
The result follows by the uniform convergence of Theorem \ref{T:LikelihoodConvergence1}.
\end{proof}


\subsection{Pseudo Limiting Likelihood for Regime $\mathbf{1}$ when $\mathbf{b \neq0}$.}\label{S:Homogenization}

In the case of Regime $1$ with $b\neq 0$, the situation is more involved because the limit of the log-likelihood $Z_{\theta,T}^{\epsilon}(\mathcal{X}_{T})$ by (\ref{Eq:LikelihoodFunction}) is not well defined. This is due to the $\epsilon/\delta$ and $(\epsilon/\delta)^{2}$ terms that appear in the expression of $Z_{\theta,T}^{\epsilon}(\mathcal{X}_{T})$. This leads us to re-parameterize the log-likelihood, so that it will have a well defined limit. However, we need to re-parameterize the log-likelihood in such a way so that the limiting expression will coincide with the expression of Section \ref{S:Averaging} for $b=0$ and at the same time maintain tractability and simplicity.

Let us denote by $Z_{\theta,T}^{\epsilon}(\mathcal{X}_{T};0)$ the log-likelihood function (\ref{Eq:LikelihoodFunction}) with $b=0$. We define the modified log-likelihood function
\begin{equation}
\hat{Z}_{\theta,T}^{\epsilon}(\mathcal{X}_{T})=\left(\frac{\delta}{\epsilon}\right)^{2}Z_{\theta,T}^{\epsilon}(\mathcal{X}_{T})+Z_{\theta,T}^{\epsilon}(\mathcal{X}_{T};0).\label{Eq:Regime1b}
\end{equation}
To characterize the limit, we first need to define several quantities.  But first we impose an additional assumption.

\begin{condition}\label{A:Assumption4}
Let the coefficients $b_{\theta},c_{\theta}$ and $\sigma$ be such that
\[
\int_{\mathcal{Y}}\left<b_{\theta_{0}},c_{\theta}\right>_{\alpha}(x,y)\mu^{1}_{\theta_{0}}(dy;x)=0
\]
for all $\theta,\theta_{0}\in\Theta$ and for all $x\in\mathbb{R}^{d}$.
\end{condition}

For example, Condition \ref{A:Assumption4} is trivially satisfied under Condition \ref{A:Assumption2} if the coefficients $c_{\theta}$ and $\sigma$ are independent of $y\in\mathcal{Y}$
(see also Remark \ref{R:RemarkRegime1} below).

Then, we can consider the auxiliary partial differential equation
\begin{equation}
\mathcal{L}^{1}_{x}\Phi(x,y)=-\left<b_{\theta_{0}},c_{\theta}\right>_{\alpha}(x,y), \qquad \int_{\mathcal{Y}}\Phi(x,y)\mu^{1}_{\theta_{0}}(dy;x)=0.\label{Eq:PoissonEquation}
\end{equation}
Under Condition \ref{A:Assumption4}, this Poisson equation has a unique bounded, periodic in $y$ and smooth solution (see Theorem 3.3.4 of \cite{BLP}). In order to emphasize the dependence of $\Phi$ on $\theta,\theta_{0}$, we shall often write  $\Phi_{\theta,\theta_{0}}(x,y)$.

Next, we define
\begin{eqnarray}
J^{1}_{\theta,\theta_{0},T}(z_{\cdot})&=&\int_{0}^{T}\int_{\mathcal{Y}}\left< b_{\theta},b_{\theta_{0}}\right>_{\alpha}\left(x_{s},y\right)\mu^{1}_{\theta_{0}}(dy;x_{s})ds
-\frac{1}{2}\int_{0}^{T}\int_{\mathcal{Y}}\left\Vert b_{\theta}\right\Vert^{2}_{\alpha}\left(x_{s},y\right)\mu^{1}_{\theta_{0}}(dy;x_{s})ds\nonumber\\
& &+\int_{0}^{T}\int_{\mathcal{Y}}\left< c_{\theta},c_{\theta_{0}}\right>_{\alpha}\left(x_{s},y\right)\mu^{1}_{\theta_{0}}(dy;x_{s})ds
-\frac{1}{2}\int_{0}^{T}\int_{\mathcal{Y}}\left\Vert c_{\theta}\right\Vert^{2}_{\alpha}\left(x_{s},y\right)\mu^{1}_{\theta_{0}}(dy;x_{s})ds\label{Eq:MainTermRegime1general}
\end{eqnarray}
and
\begin{equation*}
H_{\theta,\theta_{0}}(z_{\cdot})=\int_{0}^{T}\int_{\mathcal{Y}} \left<c_{\theta_{0}}(x_{s},y), \nabla_{y}\Phi_{\theta,\theta_{0}}\left(x_{s},y\right)\right>\mu^{1}_{\theta_{0}}(dy;x_{s})ds.
\end{equation*}

For the limiting distribution we prove the following theorem
\begin{theorem}\label{T:LikelihoodConvergence2}
Let Conditions \ref{A:Assumption1}, \ref{A:Assumption2} and \ref{A:Assumption4} hold and consider Regime $1$. Let $\mathcal{X}_{T}=\left\{x_{t},0\leq t\leq T\right\}$ be a sample path of (\ref{Eq:LDPandA1}) at $\theta=\theta_{0}$. Then, the sequence $\left\{\hat{Z}_{\theta,T}^{\epsilon}, \epsilon>0\right\}$, as defined by (\ref{Eq:Regime1b}), converges in $\mathbb{P}_{\theta_{0}}$ probability, uniformly in $\theta\in\Theta$ to $\hat{Z}^{1}_{\theta,\theta_{0},T}\left(\bar{X}^{1}_{\cdot}\right)$, where
\begin{equation*}
\hat{Z}^{1}_{\theta,\theta_{0},T}(z_{\cdot})=J^{1}_{\theta,\theta_{0}}(z_{\cdot})+H_{\theta,\theta_{0}}(z_{\cdot}).
\end{equation*}
In particular, for any $\eta>0$
\begin{equation*}
\lim_{\epsilon\downarrow 0}\mathbb{P}_{\theta_{0}}\left[\sup_{\theta\in\Theta}\left|\hat{Z}_{\theta,T}^{\epsilon}(\mathcal{X}_{T})-\hat{Z}^{1}_{\theta,\theta_{0},T}\left(\bar{X}^{1}_{\cdot}\right)\right|>\eta\right]=0.
\end{equation*}
\end{theorem}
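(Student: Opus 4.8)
The plan is to insert the dynamics of the sample path, $dx_s=\left[\frac{\epsilon}{\delta}b_{\theta_0}+c_{\theta_0}\right]\left(x_s,\frac{x_s}{\delta}\right)ds+\sqrt{\epsilon}\,\sigma\left(x_s,\frac{x_s}{\delta}\right)dW_s$, into (\ref{Eq:Regime1b}) and to expand $\hat{Z}^{\epsilon}_{\theta,T}$ into integrals organized by powers of $\epsilon/\delta$. First I would multiply $Z^{\epsilon}_{\theta,T}$ by $(\delta/\epsilon)^{2}$ and add $Z^{\epsilon}_{\theta,T}(\mathcal{X}_T;0)$; after expanding the quadratics and the $\langle\cdot,dx_s\rangle_\alpha$ integrals, the resulting terms split into three groups. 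The first group consists of the four Lebesgue integrals $\int_0^T\langle b_\theta,b_{\theta_0}\rangle_\alpha$, $-\tfrac12\int_0^T\|b_\theta\|^2_\alpha$, $\int_0^T\langle c_\theta,c_{\theta_0}\rangle_\alpha$, $-\tfrac12\int_0^T\|c_\theta\|^2_\alpha$ (all evaluated at $(x_s,x_s/\delta)$, each with coefficient exactly $1$), which by the locally periodic averaging principle (Chapter 3 of \cite{BLP}) combined with Theorem \ref{T:LLN} converge in $\mathbb{P}_{\theta_0}$-probability to $J^1_{\theta,\theta_0}(\bar X^1_\cdot)$. The second group gathers all terms carrying a prefactor $\delta/\epsilon$, $(\delta/\epsilon)^2$, $\sqrt\epsilon$ or $\delta$ in front of a bounded Lebesgue integral or an It\^o integral; these vanish in probability, the stochastic ones being controlled by the Burkholder-Davis-Gundy inequality exactly as in the proof of Theorem \ref{T:LikelihoodConvergence1}.

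The third group contains a single genuinely singular term, $\frac{\epsilon}{\delta}\int_0^T\langle b_{\theta_0},c_\theta\rangle_\alpha\left(x_s,\tfrac{x_s}{\delta}\right)ds$, arising from pairing the $\tfrac{\epsilon}{\delta}b_{\theta_0}$ part of $dx_s$ against $c_\theta$ in $Z^{\epsilon}_{\theta,T}(\mathcal{X}_T;0)$, and this is the main obstacle. By Condition \ref{A:Assumption4} its integrand averages to zero against $\mu^1_{\theta_0}(dy;x)$, so the integral itself tends to $0$, but it is multiplied by $\epsilon/\delta\uparrow\infty$, an indeterminate $\infty\cdot0$. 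I would resolve this via the corrector $\Phi_{\theta,\theta_0}$: Condition \ref{A:Assumption4} is precisely the Fredholm solvability condition that gives (\ref{Eq:PoissonEquation}) a unique smooth periodic solution with $\mathcal{L}^1_{x,\theta_0}\Phi_{\theta,\theta_0}=-\langle b_{\theta_0},c_\theta\rangle_\alpha$. Applying It\^o's formula to $\Phi_{\theta,\theta_0}(X^{\epsilon}_t,X^{\epsilon}_t/\delta)$ and using that the generator of $X^\epsilon$ acting on functions of $(x,x/\delta)$ has leading part $\frac{\epsilon}{\delta^2}\mathcal{L}^1_{x,\theta_0}$ (the $\frac{\epsilon}{\delta}b_{\theta_0}\cdot\nabla_y$ and $\frac{\epsilon}{2\delta^2}\sigma\sigma^T:\nabla_y\nabla_y$ contributions combine) produces exactly $-\frac{\epsilon}{\delta^2}\langle b_{\theta_0},c_\theta\rangle_\alpha$ at top order. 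Solving the It\^o identity for this quantity and multiplying through by $\delta$ gives
\begin{equation*}
\frac{\epsilon}{\delta}\int_0^T\langle b_{\theta_0},c_\theta\rangle_\alpha\,ds=\int_0^T\langle c_{\theta_0},\nabla_y\Phi_{\theta,\theta_0}\rangle\left(x_s,\tfrac{x_s}{\delta}\right)ds+R^{\epsilon},
\end{equation*}
where $R^{\epsilon}$ collects the boundary term $-\delta\left[\Phi_{\theta,\theta_0}(X^\epsilon_T,X^\epsilon_T/\delta)-\Phi_{\theta,\theta_0}(x_0,x_0/\delta)\right]$, the lower-order generator contributions (those with prefactors $\epsilon$ or $\delta$ coming from $b_{\theta_0}\cdot\nabla_x\Phi$, $\sigma\sigma^T:\nabla_x\nabla_y\Phi$, $c_{\theta_0}\cdot\nabla_x\Phi$ and $\sigma\sigma^T:\nabla_x\nabla_x\Phi$), and $\delta$ times the It\^o martingale $M^\epsilon_T$.

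Since $\Phi_{\theta,\theta_0}$ and its derivatives are bounded, every piece of $R^{\epsilon}$ tends to $0$ in $\mathbb{P}_{\theta_0}$-probability; in particular, although $M^\epsilon_T$ contains a large $\frac1\delta\nabla_y\Phi_{\theta,\theta_0}$ component, after multiplication by $\delta$ its quadratic variation is $O(\epsilon)$, so $\delta M^\epsilon_T\to0$ in $L^2$ by Burkholder-Davis-Gundy. The surviving integral $\int_0^T\langle c_{\theta_0},\nabla_y\Phi_{\theta,\theta_0}\rangle(x_s,x_s/\delta)ds$ is bounded and, once more by the averaging principle and Theorem \ref{T:LLN}, converges to $\int_0^T\int_{\mathcal{Y}}\langle c_{\theta_0}(\bar X^1_s,y),\nabla_y\Phi_{\theta,\theta_0}(\bar X^1_s,y)\rangle\mu^1_{\theta_0}(dy;\bar X^1_s)ds=H_{\theta,\theta_0}(\bar X^1_\cdot)$. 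Adding the three groups yields the claimed limit $J^1_{\theta,\theta_0}(\bar X^1_\cdot)+H_{\theta,\theta_0}(\bar X^1_\cdot)=\hat{Z}^1_{\theta,\theta_0,T}(\bar X^1_\cdot)$. Finally, to pass from convergence for fixed $\theta$ to the uniform-in-$\theta$ statement, I would observe that all constants and the corrector $\Phi_{\theta,\theta_0}$ depend on $\theta$ only through $b_\theta,c_\theta$, which are Lipschitz and bounded uniformly in $\theta$ by Condition \ref{A:Assumption1}; standard elliptic estimates make the bounds on $\Phi_{\theta,\theta_0}$ uniform over the bounded set $\Theta$, so each error estimate is uniform in $\theta$ and Chebyshev's inequality then gives uniform convergence in probability, exactly as in Theorem \ref{T:LikelihoodConvergence1}. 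The delicate step is the bookkeeping in the singular term: one must verify that after multiplying the It\^o identity by $\delta$ every remainder genuinely carries a vanishing prefactor, and in particular that the potentially dangerous $\frac1\delta$-martingale contribution is tamed to order $\sqrt\epsilon$.
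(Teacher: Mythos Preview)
Your proposal is correct and follows essentially the same approach as the paper's proof: the same decomposition of $\hat{Z}^{\epsilon}_{\theta,T}$ into the bounded Lebesgue integrals converging to $J^{1}_{\theta,\theta_{0}}$, the vanishing $\delta/\epsilon$, $(\delta/\epsilon)^{2}$ and $\sqrt{\epsilon}$ terms, and the singular $\tfrac{\epsilon}{\delta}\int_{0}^{T}\langle b_{\theta_{0}},c_{\theta}\rangle_{\alpha}\,ds$ term handled via It\^o's formula applied to the corrector $\Phi_{\theta,\theta_{0}}$ from (\ref{Eq:PoissonEquation}). Your bookkeeping on the remainder $R^{\epsilon}$, in particular the observation that the $\tfrac{1}{\delta}\nabla_{y}\Phi$ martingale contribution is reduced to order $\sqrt{\epsilon}$ after multiplication by $\delta$, matches exactly the paper's computation (\ref{Eq:ItoFormula2}).
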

Before proceeding with the proof of the theorem, we make two remarks.
\begin{remark}
 When $b=0$ we get that the ``bias'' $H_{\theta,\theta_{0}}(z_{\cdot})=0$ (since in this case $\Phi(x,y)=0$), and we get back the result of Theorem \ref{T:LikelihoodConvergence1}.
The term $J^{1}_{\theta,\theta_{0}}(z_{\cdot})$ is maximized at $\theta=\theta_{0}$ as in Theorem \ref{T:LikelihoodConvergence1}.
However, this is not true in general for $H_{\theta,\theta_{0}}(z_{\cdot})$. This implies that
 maximum likelihood in general fails for Regime $1$.
\end{remark}

\begin{remark}\label{R:RemarkRegime1}
When Condition \ref{A:Assumption4} is not satisfied, the situation is more complicated. Using the modified log-likelihood (\ref{Eq:Regime1b}), Condition \ref{A:Assumption4} is necessary in order for (\ref{Eq:PoissonEquation})
to have a solution. This follows by Fredholm alternative as in Theorem 3.3.4 of \cite{BLP}. The use of the Poisson equation (\ref{Eq:PoissonEquation}) is an essential tool in the proof of Theorem \ref{T:LikelihoodConvergence2}.
There does not seem to be an obvious way to reparameterize
 the likelihood in such a way that it will have a well defined limit and at the same time maintain tractability.
However, as we shall see in Section \ref{S:Examples}, Theorem \ref{T:LikelihoodConvergence2} covers one of the cases of interest which is the first order Langevin equation with a two scale potential.
 To be more precise, it covers the case of a small noise diffusion in two-scale potentials of the form (\ref{Eq:LDPandA1}) with $b_{\theta}(x,y)=-\nabla Q_{\theta}(y)$, $c_{\theta}(x,y)=-\nabla V_{\theta}(x)$
and $\sigma(x,y)=$constant.
\end{remark}

\begin{proof}[Proof of Theorem \ref{T:LikelihoodConvergence2}.]
After some term rearrangement, we get
\begin{eqnarray}
\hat{Z}_{\theta,T}^{\epsilon}(\mathcal{X}_{T})&=&\int_{0}^{T}\left[\left<b_{\theta},b_{\theta_{0}}\right>_{\alpha}-\frac{1}{2}\left\Vert b_{\theta}\right\Vert_{\alpha}^{2}\right]\left(x_{s},\frac{x_{s}}{\delta}\right)ds\nonumber\\
& &\hspace{0.2cm}+\frac{\epsilon}{\delta}\int_{0}^{T}\left<b_{\theta_{0}},c_{\theta}\right>_{\alpha}\left(x_{s},\frac{x_{s}}{\delta}\right)ds\nonumber\\
& &\hspace{0.2cm}+\int_{0}^{T}\left[\left<c_{\theta},c_{\theta_{0}}\right>_{\alpha}-\frac{1}{2}\left\Vert c_{\theta}\right\Vert_{\alpha}^{2}\right]\left(x_{s},\frac{x_{s}}{\delta}\right)ds+\nonumber\\
& &\hspace{0.2cm}+\frac{\delta}{\epsilon}\int_{0}^{T}\left[\left<b_{\theta},c_{\theta_{0}}\right>_{\alpha}+\left<b_{\theta_{0}},c_{\theta}\right>_{\alpha}-\left<b_{\theta},c_{\theta}\right>_{\alpha}\right]\left(x_{s},\frac{x_{s}}{\delta}\right)ds\nonumber\\
& &\hspace{0.2cm}+\left(\frac{\delta}{\epsilon}\right)^{2}\int_{0}^{T}\left[\left<c_{\theta},c_{\theta_{0}}\right>_{\alpha}-\frac{1}{2}\left\Vert c_{\theta}\right\Vert_{\alpha}^{2}\right]\left(x_{s},\frac{x_{s}}{\delta}\right)ds+
\nonumber\\
& &+\hspace{0.2cm}\sqrt{\epsilon}\left[\frac{\delta}{\epsilon}\int_{0}^{T}\left< b_{\theta},\sigma dW_{s}\right>_{\alpha}\left(x_{s},\frac{x_{s}}{\delta}\right)+\left(\left(\frac{\delta}{\epsilon}\right)^{2}+1\right)\int_{0}^{T}\left< c_{\theta},\sigma dW_{s}\right>_{\alpha}\left(x_{s},\frac{x_{s}}{\delta}\right)\right]\nonumber\\
&=&K_{1}^{\epsilon}+
\frac{\epsilon}{\delta}K_{2}^{\epsilon} +K_{3}^{\epsilon}+\frac{\delta}{\epsilon}K_{4}^{\epsilon}+\left(\frac{\delta}{\epsilon}\right)^{2}K_{5}^{\epsilon}+\sqrt{\epsilon}M^{\epsilon}_{T}. \label{Eq:Likelihood1}
\end{eqnarray}

We study the limiting behavior of the terms in the right hand side of (\ref{Eq:Likelihood1}).
It is relatively easy to see that the $\frac{\delta}{\epsilon}K_{4}^{\epsilon}+\left(\frac{\delta}{\epsilon}\right)^{2}K_{5}^{\epsilon}$ converges to zero in the p-th mean for every $p\geq 1$. Moreover, the quadratic variation of the stochastic integral $M^{\epsilon}_{T}$ in (\ref{Eq:Likelihood1}) has a well defined limit in p-th mean, which together with the fact that it is multiplied by $\sqrt{\epsilon}$, gives us that this term on the right hand side of
(\ref{Eq:Likelihood1}) converges to zero in p-th mean.

Therefore it remains to study the terms $K_{1}^{\epsilon}$, $\frac{\epsilon}{\delta}K_{2}^{\epsilon}$ and $K_{3}^{\epsilon}$. By standard averaging principle for locally periodic diffusions, it can be seen that $K_{1}^{\epsilon}+K_{3}^{\epsilon}$ converges in $\mathbb{P}_{\theta_{0}}$ probability, uniformly in $\theta\in\Theta$ to $J^{1}_{\theta,\theta_{0}}(\bar{X}^{1}_{\cdot})$; see for example \cite{BLP,PS}.

Lastly, we need to study the term $\frac{\epsilon}{\delta}K_{2}^{\epsilon}$. For this purpose we apply It\^{o} formula to $\Phi(x,x/\delta)$ that satisfies (\ref{Eq:PoissonEquation}) with $x=X^{\epsilon}_{s}$ to get
\begin{eqnarray}
d\Phi&=&\left[\frac{\epsilon}{\delta^{2}}\mathcal{L}^{1}_{X_{t}}\Phi+\frac{1}{\delta}\left<c_{\theta_{0}},\nabla_{y}\Phi\right>\right]dt\nonumber\\
& &+\left[\frac{\epsilon}{\delta}\left<b_{\theta_{0}},\nabla_{x}\Phi \right>+\left<c_{\theta_{0}},\nabla_{x}\Phi\right>+ \frac{\epsilon}{2}\sigma\sigma^{T}:\nabla_{x}\nabla_{x}\Phi+\frac{\epsilon}{\delta}\sigma\sigma^{T}:\nabla_{x}\nabla_{y}\Phi\right]dt\nonumber\\
& &+\frac{\sqrt{\epsilon}}{\delta}\left<\nabla_{y}\Phi,\sigma dW_{t}\right>+\sqrt{\epsilon}\left<\nabla_{x}\Phi,\sigma dW_{t}\right>.\label{Eq:ItoFormula1}
\end{eqnarray}
Hence, recalling that $\Phi$ satisfies (\ref{Eq:PoissonEquation}), which has a unique, periodic in $y$, bounded and smooth solution due to Condition \ref{A:Assumption4}, we obtain
\begin{eqnarray}
\frac{\epsilon}{\delta}K_{2}^{\epsilon}&=&\frac{\epsilon}{\delta}\int_{0}^{T}\left<b_{\theta_{0}},c_{\theta}\right>_{\alpha}\left(X^{\epsilon}_{s},\frac{X^{\epsilon}_{s}}{\delta}\right)ds=-
\frac{\epsilon}{\delta}\int_{0}^{T}\mathcal{L}^{1}_{X_{s}}\Phi \left(X_{s}^{\epsilon},\frac{X_{s}^{\epsilon}}{\delta}\right)ds\nonumber\\
&=& \delta\left(\Phi(0)-\Phi(t)\right)\nonumber\\
& &+\int_{0}^{T}\left[\epsilon\left<b_{\theta_{0}},\nabla_{x}\Phi \right>+\delta\left<c_{\theta_{0}},\nabla_{x}\Phi\right>+ \frac{\epsilon\delta}{2}\sigma\sigma^{T}:\nabla_{x}\nabla_{x}\Phi+\epsilon\sigma\sigma^{T}:\nabla_{x}\nabla_{y}\Phi\right]\left(X_{s}^{\epsilon},\frac{X_{s}^{\epsilon}}{\delta}\right)ds\nonumber\\
& &+ \sqrt{\epsilon}\int_{0}^{T}\left<\nabla_{y}\Phi,\sigma dW_{s}\right>\left(X_{s}^{\epsilon},\frac{X_{s}^{\epsilon}}{\delta}\right)+\sqrt{\epsilon}\delta\int_{0}^{T}\left<\nabla_{x}\Phi,\sigma dW_{s}\right>\left(X_{s}^{\epsilon},\frac{X_{s}^{\epsilon}}{\delta}\right)\nonumber\\
& &+\int_{0}^{T}\left<c_{\theta_{0}},\nabla_{y}\Phi\right>\left(X_{s}^{\epsilon},\frac{X_{s}^{\epsilon}}{\delta}\right) ds.\label{Eq:ItoFormula2}
\end{eqnarray}

From this statement the result follows immediately since the last term $\int_{0}^{T}\left<c_{\theta_{0}},\nabla_{y}\Phi\right>\left(X_{s}^{\epsilon},\frac{X_{s}^{\epsilon}}{\delta}\right) ds$ converges in $\mathbb{P}_{\theta_{0}}$ probability, uniformly in $\theta\in\Theta$, to $H_{\theta,\theta_{0}}(\bar{X}^{1}_{\cdot})$. The rest of the terms on the right hand side of the last display converge to zero in $\mathbb{P}_{\theta_{0}}$ probability, uniformly in $\theta\in\Theta$, due to the boundedness of $\Phi$ and its derivatives and Condition \ref{A:Assumption1}. This concludes the proof of the theorem.
\end{proof}


\section{Central Limit Theorem for Regime 1 when $b=0$ and for Regimes 2 and 3}\label{S:CLT}

In this section we state and prove a central limit theorem (CLT) for the maximum likelihood estimator $\hat{\theta}^{\epsilon}$ of $\theta$ in the case of Subsection \ref{S:Averaging}.

The main structural assumption is that under Regime 1 we have that $b_{\theta}(x,y)=0$.
For notational convenience and without loss of generality, we then consider that $b_{\theta}(x,y)=0$ for all three regimes. For Regime 2 when $b_{\theta}\neq 0$ one essentially  just replaces in the final formula, the function $c_{\theta}(x,y)$, by the function $\gamma b_{\theta}(x,y)+c_{\theta}(x,y)$.
So, without loss of generality, let us assume that $b(x,y)=0$ for all three regimes.

We define the normed log-likelihood ratio
\begin{eqnarray}
M_{\epsilon}(\theta,u)&=&\log\frac{d\mathbb{P}_{\theta+\sqrt{\epsilon} u}}{d\mathbb{P}_{\theta}}(x)\nonumber\\
&=&
\frac{1}{\epsilon}\int_{0}^{T}\left<c_{\theta+\sqrt{\epsilon}u}-c_{\theta}, dx_{s}\right>_{\alpha}\left(x_{s},\frac{x_{s}}{\delta}\right)-
\frac{1}{2\epsilon}\int_{0}^{T}\left(\left\Vert c_{\theta+\sqrt{\epsilon}u}\right\Vert^{2}_{\alpha}-\left\Vert c_{\theta}\right\Vert^{2}_{\alpha}\right)\left(x_{s},\frac{x_{s}}{\delta}\right) ds.\nonumber
\end{eqnarray}
With $\mathbb{P}_{\theta}$ probability $1$ we can write that
\begin{equation*}
M_{\epsilon}(\theta,u)=\frac{1}{\sqrt{\epsilon}}\int_{0}^{T}
\left< c_{\theta+\sqrt{\epsilon}u}-c_{\theta}, \sigma dW_{s}\right>_{\alpha}\left(x_{s},\frac{x_{s}}{\delta}\right)-
\frac{1}{2\epsilon}\int_{0}^{T}\left\Vert c_{\theta+\sqrt{\epsilon}u}-c_{\theta}\right\Vert^{2}_{\alpha}\left(x_{s},\frac{x_{s}}{\delta}\right) ds.
\end{equation*}
For notational convenience, we also define the quantities
\begin{eqnarray}
S(\theta,x,y)&=&\sigma^{-1}(x,y)\nabla_{\theta}c_{\theta}(x,y),\nonumber\\
q_{i}(x,\theta)&=&\int_{\mathcal{Y}}S(\theta,x,y)S^{T}(\theta,x,y)\mu^{i}_{\theta}(dy;x).\label{Eq:Definition_q}
\end{eqnarray}
The Fisher information matrix is defined to be
\begin{equation*}
I_{i}(\theta)=\int_{0}^{T}q_{i}(\bar{X}^{i}_{s},\theta)ds.\label{Eq:FisherInformationMatrix}
\end{equation*}
To this end we recall that the invariant measure $\mu$ has, under our assumptions, a smooth, uniformly bounded away from zero density, which is also periodic in the $y-$variable (Theorem 3.3.4 and Section 3.6.2 in \cite{BLP}) for 
Regimes 1 and 2 and Condition 2.3 for Regime 3. We will denote by $m^{i}_{\theta}(x,y)$ the density of $\mu^{i}_{\theta}(dy;x)$, namely $\mu^{i}_{\theta}(dy;x)=m^{i}_{\theta}(x,y)dy$. In this section the following condition is imposed.
\begin{condition}\label{A:AssumptionCLT}
\begin{enumerate}
\item{The function $c_{\theta}(x,y)$ is twice continuously differentiable in $\theta$ with bounded derivatives.}
\item{The Fisher information matrix $I_{i}(\theta)$ is positive definite uniformly in $\theta\in \Theta$, i.e. there exists $c_{0}>0$ such that
\[
0<c_{0}\leq\inf_{\theta\in\Theta}\inf_{|\lambda|=1}\left<I(\theta)\lambda,\lambda\right>
\]}
\item{The vector process $\left\{q^{1/2}_{i}(X^{\epsilon}_{s},\theta), t\in[0,T]\right\}$ is continuous in probability, uniformly on $\theta\in\Theta$ in $L^{2}[0,T]$ on $\theta$ and on $X$ in the point
$\theta=\theta_{0}$.}
\item{The vector valued function $\sqrt{m^{i}_{\theta}(x,y)}\sigma^{-1}(x,y)c_{\theta}(x,y)$ is Lipschitz continuous in $x$ with a Lipschitz constant that is uniformly bounded in $(\theta,y)\in\Theta\times \mathcal{Y}$.}
\end{enumerate}
\end{condition}
We then have the following theorem.
\begin{theorem}\label{T:CLT}
Let the conditions of Theorem \ref{T:LLN} and Condition \ref{A:AssumptionCLT} hold. Consider Regime $i=1,2,3$ and let $\hat{\theta}^{\epsilon}$ be the maximum likelihood estimator of $\theta$.  Then, uniformly on compacts $\tilde{\Theta}\subset\Theta$ we have that in distribution under $\mathbb{P}_{\theta}$, the following central limit result holds
\begin{equation*}
\frac{1}{\sqrt{\epsilon}}\left[I_{i}(\theta)\right]^{1/2}\left(\hat{\theta}^{\epsilon}-\theta\right)\Rightarrow N(0,I).
\end{equation*}
Moreover, the MLE has converging moments for all $p>0$, i.e.,
\begin{equation*}
 \lim_{\epsilon\downarrow 0}\sup_{\theta\in\tilde{\Theta}}\left|\mathbb{E}_{\theta}\left|I_{i}^{1/2}(\theta)\left(\hat{\theta}^{\epsilon}-\theta\right)\right|^{p}\epsilon^{-p/2}-\mathbb{E}|Z|^{p}\right|=0
\end{equation*}
where $Z$ is a standard $N(0,I)$ random vector.
\end{theorem}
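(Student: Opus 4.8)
The plan is to place the result within the Ibragimov--Has'minskii framework for the likelihood ratio field, as developed for small-noise diffusions in \cite{Kutoyants1994}. Writing $u=(\theta'-\theta)/\sqrt{\epsilon}$ and
\[
Z_{\epsilon}(u)=\exp\{M_{\epsilon}(\theta,u)\}=\frac{d\mathbb{P}_{\theta+\sqrt{\epsilon}u}}{d\mathbb{P}_{\theta}}(\mathcal{X}_{T}),
\]
defined for $u$ in $U_{\epsilon}=\{u:\theta+\sqrt{\epsilon}u\in\Theta\}$, the normalized error $\hat{u}_{\epsilon}=(\hat{\theta}^{\epsilon}-\theta)/\sqrt{\epsilon}$ is the maximizer of $Z_{\epsilon}$. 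The strategy is to prove weak convergence of the random field $u\mapsto Z_{\epsilon}(u)$, in the space of continuous functions vanishing at infinity, to the Gaussian field $Z(u)=\exp\{\langle\Delta_{i},u\rangle-\tfrac{1}{2}\langle I_{i}(\theta)u,u\rangle\}$ with $\Delta_{i}\sim N(0,I_{i}(\theta))$, and then to transfer this to convergence of the argmax and of its moments via the general theorems of Ibragimov--Has'minskii (see \cite{Kutoyants1994,Kutoyants2004}).

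First I would establish convergence of the finite-dimensional distributions of $M_{\epsilon}(\theta,\cdot)$. Using the martingale representation of $M_{\epsilon}$ already recorded above together with the Taylor expansion $c_{\theta+\sqrt{\epsilon}u}-c_{\theta}=\sqrt{\epsilon}\,(\nabla_{\theta}c_{\theta})u+o(\sqrt{\epsilon})$, valid by Condition \ref{A:AssumptionCLT}(i), the quadratic term
\[
\frac{1}{2\epsilon}\int_{0}^{T}\left\Vert c_{\theta+\sqrt{\epsilon}u}-c_{\theta}\right\Vert_{\alpha}^{2}\left(x_{s},\frac{x_{s}}{\delta}\right)ds
\]
converges, by the averaging principle for locally periodic diffusions (Chapter 3 of \cite{BLP}) and Theorem \ref{T:LLN}, to $\tfrac{1}{2}\langle I_{i}(\theta)u,u\rangle$, where $I_{i}(\theta)=\int_{0}^{T}q_{i}(\bar{X}^{i}_{s},\theta)ds$ and $q_{i}$ is as in (\ref{Eq:Definition_q}). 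The leading martingale term reduces to $\int_{0}^{T}\langle(\nabla_{\theta}c_{\theta})u,\sigma\,dW_{s}\rangle_{\alpha}$, whose predictable quadratic variation converges in probability to $\langle I_{i}(\theta)u,u\rangle$ by the same averaging argument; a martingale central limit theorem then yields convergence to $\langle\Delta_{i},u\rangle$ with $\Delta_{i}\sim N(0,I_{i}(\theta))$. Condition \ref{A:AssumptionCLT}(iv), which makes the integrand Lipschitz in the slow variable uniformly in the fast one, is what permits carrying out the averaging along the random trajectory $x_{s}=X^{\epsilon}_{s}$ rather than the limit $\bar{X}^{i}_{s}$, while Condition \ref{A:AssumptionCLT}(iii) upgrades this to uniformity in $\theta$ on compacts.

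Next I would verify the two Ibragimov--Has'minskii regularity conditions for the field $Z_{\epsilon}$. For the continuity/tightness estimate I would bound $\mathbb{E}_{\theta}|Z_{\epsilon}^{1/2}(u_{1})-Z_{\epsilon}^{1/2}(u_{2})|^{2}\leq C|u_{1}-u_{2}|^{2}$, which follows from the boundedness of the first two $\theta$-derivatives of $c_{\theta}$ (Condition \ref{A:AssumptionCLT}(i)) together with the uniform nondegeneracy of $\sigma\sigma^{T}$ (Condition \ref{A:Assumption1}(iii)); this gives tightness on every compact. For the large-deviations (tail) estimate I would show $\mathbb{E}_{\theta}Z_{\epsilon}^{1/2}(u)\leq\exp(-\kappa|u|^{2})$ in the relevant range, the quadratic rate coming from the uniform positive-definiteness of the Fisher information in Condition \ref{A:AssumptionCLT}(ii). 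These two bounds, together with the finite-dimensional convergence, give weak convergence of $Z_{\epsilon}(\cdot)$ and hence $\hat{u}_{\epsilon}\Rightarrow\text{argmax}_{u}Z(u)=I_{i}(\theta)^{-1}\Delta_{i}\sim N(0,I_{i}(\theta)^{-1})$; rescaling by $I_{i}(\theta)^{1/2}$ yields the stated $N(0,I)$ limit, and the uniform moment majorants furnished by the same two conditions give, through the moment part of the Ibragimov--Has'minskii theorem, the convergence $\mathbb{E}_{\theta}|\epsilon^{-1/2}I_{i}^{1/2}(\theta)(\hat{\theta}^{\epsilon}-\theta)|^{p}\to\mathbb{E}|Z|^{p}$.

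I expect the main obstacle to be the martingale central limit step \emph{in the presence of the fast scale}: one must show that the predictable quadratic variation
\[
\int_{0}^{T}SS^{T}\left(\theta,X^{\epsilon}_{s},\frac{X^{\epsilon}_{s}}{\delta}\right)ds
\]
of the leading vector martingale homogenizes to the deterministic limit $I_{i}(\theta)=\int_{0}^{T}q_{i}(\bar{X}^{i}_{s},\theta)ds$, uniformly in $\theta$, despite the rapidly oscillating argument $X^{\epsilon}_{s}/\delta$ and the fact that the slow motion $X^{\epsilon}_{s}$ is itself $\epsilon$-dependent. Controlling this requires combining the law of large numbers of Theorem \ref{T:LLN} with the locally periodic averaging of \cite{BLP} and the Lipschitz regularity in Condition \ref{A:AssumptionCLT}(iii)--(iv); once this homogenization is in hand, the remaining steps are routine applications of the abstract Ibragimov--Has'minskii machinery.
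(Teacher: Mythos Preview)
Your framework is the same as the paper's: invoke Theorem~1.6 in Chapter~1 of \cite{Kutoyants1994} after verifying uniform LAN, a H\"older modulus for the likelihood ratio field, and an exponential tail bound; these are precisely Lemmas~\ref{L:CLT_Condition1}, \ref{L:CLT_Condition2}, and \ref{L:CLT_Condition3}.

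Where your sketch falls short is in locating the difficulty. The LAN expansion and the H\"older modulus go through essentially as in the non-multiscale small-noise theory once you cite the averaging principle and Theorem~\ref{T:LLN}; these are the routine parts. The substantive multiscale work is in the tail estimate, and your one-line justification (``the quadratic rate coming from the uniform positive-definiteness of the Fisher information'') is exactly what works in \cite{Kutoyants1994} but does \emph{not} suffice here. The problem is that one needs a pathwise lower bound, uniform in $(\epsilon,\delta)$, on the fast-oscillating integral
\[
\int_{0}^{T}\left\Vert \Delta c_{\theta}\right\Vert_{\alpha}^{2}\!\left(X^{\epsilon}_{s},\frac{X^{\epsilon}_{s}}{\delta}\right)ds,
\]
not merely convergence of its expectation or convergence in probability; otherwise the exponential does not factor. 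The paper handles this by introducing a corrector: the solution $\Phi$ of the Poisson equation $\mathcal{L}_{x}\Phi=\left\Vert \Delta c_{\theta}\right\Vert_{\alpha}^{2}-(v,q_{\epsilon,v}^{1/2})^{2}$, to which It\^o's formula is applied so that the oscillating integrand is replaced, up to terms bounded by $\sup_{t}|W_{t}|$, by one depending only on the slow variable. A second corrector plus Gronwall then gives the pathwise control $\sup_{t}\|X^{\epsilon}_{t}-\bar{X}_{t}\|\leq C\sqrt{\epsilon+\delta^{2}/\epsilon}\,\sup_{t}|W_{t}|$, and this is exactly where Condition~\ref{A:AssumptionCLT}(iv) enters. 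You invoke (iv) for the averaging in the LAN step, but its real role in the paper is to make the tail estimate go through; without the corrector technique your sketch of Lemma~\ref{L:CLT_Condition3} has a gap.
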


The proof of this theorem follows by Theorem 1.6 in Chapter $1$ of \cite{Kutoyants1994}). The Lemmas \ref{L:CLT_Condition1}, \ref{L:CLT_Condition2} and \ref{L:CLT_Condition3} prove that the conditions of that theorem hold. For notational convenience we omit writing the subscript $i$, which denotes the particular regime under consideration.

\begin{lemma}\label{L:CLT_Condition1}
Under the conditions of Theorem \ref{T:CLT}, the family
$\{\mathbb{P}^{\epsilon}_{\theta}: \theta\in \Theta\}$ is uniformly asymptotically normal with normalizing matrix $\phi(\epsilon,\theta)=\sqrt{\epsilon}I^{-1/2}(\theta)$.
\end{lemma}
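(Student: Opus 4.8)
The plan is to verify the local-asymptotic-normality representation that, in the sense of Kutoyants (Chapter 1 of \cite{Kutoyants1994}), defines uniform asymptotic normality. Since $\phi(\epsilon,\theta)=\sqrt{\epsilon}I^{-1/2}(\theta)$, the normalized log-likelihood ratio is
\[
\log\frac{d\mathbb{P}_{\theta+\phi(\epsilon,\theta)u}}{d\mathbb{P}_{\theta}}=M_{\epsilon}\left(\theta,I^{-1/2}(\theta)u\right),
\]
and the goal is an expansion $M_{\epsilon}(\theta,I^{-1/2}(\theta)u)=\langle u,\Delta_{\epsilon}(\theta)\rangle-\frac{1}{2}|u|^{2}+r_{\epsilon}(\theta,u)$ in which $\Delta_{\epsilon}(\theta)\Rightarrow N(0,I)$ and $r_{\epsilon}(\theta,u)\to 0$ in $\mathbb{P}_{\theta}$-probability, both uniformly in $\theta$ over compacts. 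First I would start from the stochastic-integral representation of $M_{\epsilon}(\theta,\cdot)$ that holds $\mathbb{P}_{\theta}$-almost surely, set $v=I^{-1/2}(\theta)u$, and Taylor expand $c_{\theta+\sqrt{\epsilon}v}-c_{\theta}=\sqrt{\epsilon}\,\nabla_{\theta}c_{\theta}\,v+O(\epsilon)$ via Condition \ref{A:AssumptionCLT}(i), which supplies two bounded $\theta$-derivatives.

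With this expansion, the martingale part $\frac{1}{\sqrt{\epsilon}}\int_{0}^{T}\langle c_{\theta+\sqrt{\epsilon}v}-c_{\theta},\sigma dW_{s}\rangle_{\alpha}$ has leading term $\int_{0}^{T}\langle\nabla_{\theta}c_{\theta}\,v,\sigma dW_{s}\rangle_{\alpha}$, which, using $\alpha=\sigma\sigma^{T}$ and the definition of $S$, equals $\langle u,\Delta_{\epsilon}(\theta)\rangle$ with $\Delta_{\epsilon}(\theta)=I^{-1/2}(\theta)\int_{0}^{T}S(\theta,x_{s},x_{s}/\delta)\,dW_{s}$. The remaining Taylor contribution is $O(\sqrt{\epsilon})$ times a stochastic integral with bounded integrand, so it vanishes in $L^{p}$ by Burkholder-Davis-Gundy and is absorbed into $r_{\epsilon}$. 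The quadratic part has leading term $-\frac{1}{2}\int_{0}^{T}\|\nabla_{\theta}c_{\theta}\,v\|_{\alpha}^{2}\,ds$, the lower-order terms again being $o(1)$.

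I would then pass to the limit using the same locally periodic averaging principle invoked in Theorem \ref{T:LikelihoodConvergence1} together with the law of large numbers of Theorem \ref{T:LLN}: along the observed path $x_{s}\to\bar{X}^{i}_{s}$ while the fast variable $x_{s}/\delta$ averages against $\mu^{i}_{\theta}(dy;\bar{X}^{i}_{s})$. The quadratic part then tends to $-\frac{1}{2}v^{T}I_{i}(\theta)v=-\frac{1}{2}u^{T}I^{-1/2}(\theta)I_{i}(\theta)I^{-1/2}(\theta)u=-\frac{1}{2}|u|^{2}$, while the quadratic variation of the martingale $\int_{0}^{T}S\,dW_{s}$ converges in probability to $\int_{0}^{T}q_{i}(\bar{X}^{i}_{s},\theta)\,ds=I_{i}(\theta)$, so that $\langle\Delta_{\epsilon}(\theta)\rangle_{T}\to I^{-1/2}(\theta)I_{i}(\theta)I^{-1/2}(\theta)=I$. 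Since this limit is deterministic, the martingale central limit theorem yields $\Delta_{\epsilon}(\theta)\Rightarrow N(0,I)$, which establishes the representation pointwise in $\theta$.

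The hard part will be making every step uniform in $\theta$ on compacts, which is exactly what the remainder of Condition \ref{A:AssumptionCLT} is for: part (iii), the uniform-in-$\theta$ $L^{2}[0,T]$-continuity in probability of $q_{i}^{1/2}$, promotes the convergence of the quadratic variation to a uniform one and hence gives a central limit theorem uniform in $\theta$; part (iv), the Lipschitz continuity of $\sqrt{m^{i}_{\theta}}\,\sigma^{-1}c_{\theta}$ uniform in $(\theta,y)$, controls the averaging of the path-dependent integrands as $\epsilon$ and $\delta(\epsilon)$ tend to zero simultaneously; and part (ii), the uniform positive definiteness of $I_{i}(\theta)$, guarantees that $I^{-1/2}(\theta)$ is well defined and bounded so the normalization and the identity $I^{-1/2}I_{i}I^{-1/2}=I$ are legitimate. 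The genuinely delicate point is the coupling of the two vanishing scales inside the averaging and CLT steps carried out uniformly in $\theta$; the pointwise expansion itself is routine.
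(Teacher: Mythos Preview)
Your proposal is correct and follows essentially the same route as the paper: decompose $M_{\epsilon}$ into the leading martingale term $\langle u,\Delta_{\epsilon}(\theta)\rangle$ (the paper's $J^{\epsilon}_{2}$), the quadratic term $-\tfrac{1}{2}|u|^{2}$ (the paper's $J^{\epsilon}_{4}$), and two remainders (the paper's $J^{\epsilon}_{1},J^{\epsilon}_{3}$), then apply averaging plus Theorem~\ref{T:LLN} to the quadratic part and a martingale CLT to the linear part. The only cosmetic differences are that the paper bounds the Taylor remainder via the integral form $c_{\theta+\ell}-c_{\theta}=\int_{0}^{1}(\ell,\nabla_{\theta}c_{\theta+\ell h})\,dh$ and continuity of $\nabla_{\theta}c$ rather than a second-order $O(\epsilon)$ bound, and that Condition~\ref{A:AssumptionCLT}(iv) is not actually invoked here---the paper reserves it for Lemma~\ref{L:CLT_Condition3}.
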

The proof of this lemma is presented in the appendix.

\begin{lemma}\label{L:CLT_Condition2}
Under the conditions of Theorem \ref{T:CLT}, there exists $m>d/2$ and a constant $C<\infty$ such that for every $\epsilon\in(0,1)$ and compact $\tilde{\Theta}\subset \Theta$
\[
\sup_{\theta\in\tilde{\Theta}}\sup_{|u_{1},u_{2}|<r}|u_{2}-u_{1}|^{-2m}\mathbb{E}_{\theta}\left|e^{\frac{1}{2m}M_{\epsilon}(\theta,u_{2})}-e^{\frac{1}{2m}M_{\epsilon}(\theta,u_{1})} \right|^{2m}\leq C
\]
\end{lemma}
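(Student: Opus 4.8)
The plan is to establish this H\"older-type bound on the random field $u\mapsto e^{\frac{1}{2m}M_{\epsilon}(\theta,u)}$ via a first-order expansion in $u$ along the segment joining $u_{1}$ and $u_{2}$, followed by a change of measure that removes both the exponential weight and the dangerous $1/\sqrt{\epsilon}$ prefactor. Write $G_{\epsilon}(\theta,u)=e^{\frac{1}{2m}M_{\epsilon}(\theta,u)}$. Since $c_{\theta}$ is $C^{1}$ in $\theta$ by Condition \ref{A:AssumptionCLT}, $M_{\epsilon}(\theta,\cdot)$ is differentiable in $u$, and with $v(s)=u_{1}+s(u_{2}-u_{1})$ the fundamental theorem of calculus gives
\[
G_{\epsilon}(\theta,u_{2})-G_{\epsilon}(\theta,u_{1})=\frac{1}{2m}\int_{0}^{1}\left<\nabla_{u}M_{\epsilon}(\theta,v(s)),u_{2}-u_{1}\right>G_{\epsilon}(\theta,v(s))\,ds.
\]
Applying Cauchy--Schwarz to the inner product and then Jensen's inequality to the probability measure $ds$ on $[0,1]$ against the convex map $x\mapsto x^{2m}$, and using $G_{\epsilon}(\theta,v)^{2m}=e^{M_{\epsilon}(\theta,v)}$, I would obtain
\[
\mathbb{E}_{\theta}\left|G_{\epsilon}(\theta,u_{2})-G_{\epsilon}(\theta,u_{1})\right|^{2m}\leq\left(\frac{|u_{2}-u_{1}|}{2m}\right)^{2m}\int_{0}^{1}\mathbb{E}_{\theta}\left[\left|\nabla_{u}M_{\epsilon}(\theta,v(s))\right|^{2m}e^{M_{\epsilon}(\theta,v(s))}\right]ds.
\]
Thus the whole statement reduces, after dividing by $|u_{2}-u_{1}|^{2m}$, to a uniform bound on the inner expectation, i.e.\ on the weighted $2m$-th moment of $\nabla_{u}M_{\epsilon}$.

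The key step is the change of measure. Because $e^{M_{\epsilon}(\theta,v)}=d\mathbb{P}_{\theta+\sqrt{\epsilon}v}/d\mathbb{P}_{\theta}$, one has $\mathbb{E}_{\theta}[|\nabla_{u}M_{\epsilon}(\theta,v)|^{2m}e^{M_{\epsilon}(\theta,v)}]=\mathbb{E}_{\theta+\sqrt{\epsilon}v}|\nabla_{u}M_{\epsilon}(\theta,v)|^{2m}$. Next I would differentiate the path-space form of the log-likelihood term by term, using $\nabla_{u}c_{\theta+\sqrt{\epsilon}u}=\sqrt{\epsilon}\,\nabla_{\theta}c_{\theta+\sqrt{\epsilon}u}$, and regroup to get the compact representation
\[
\nabla_{u}M_{\epsilon}(\theta,u)=\frac{1}{\sqrt{\epsilon}}\int_{0}^{T}\left<\nabla_{\theta}c_{\theta+\sqrt{\epsilon}u},dx_{s}-c_{\theta+\sqrt{\epsilon}u}\,ds\right>_{\alpha}\left(x_{s},\frac{x_{s}}{\delta}\right).
\]
Evaluated at $u=v$ under $\mathbb{P}_{\theta+\sqrt{\epsilon}v}$, the innovation satisfies $dx_{s}-c_{\theta+\sqrt{\epsilon}v}\,ds=\sqrt{\epsilon}\,\sigma\,d\hat{W}_{s}$ with $\hat{W}$ a $\mathbb{P}_{\theta+\sqrt{\epsilon}v}$-Wiener process, so the $1/\sqrt{\epsilon}$ cancels exactly and $\nabla_{u}M_{\epsilon}(\theta,v)$ collapses to the driftless stochastic integral $\int_{0}^{T}\left<\nabla_{\theta}c_{\theta+\sqrt{\epsilon}v},\sigma\,d\hat{W}_{s}\right>_{\alpha}$, whose integrand $\sigma^{-1}\nabla_{\theta}c_{\theta+\sqrt{\epsilon}v}=S(\theta+\sqrt{\epsilon}v,\cdot)$ is bounded uniformly thanks to Condition \ref{A:AssumptionCLT}(i) and the nondegeneracy of $\sigma$ in Condition \ref{A:Assumption1}.

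Given this representation, the Burkholder--Davis--Gundy inequality bounds the $2m$-th moment by a constant times the $m$-th moment of the quadratic variation, which is of order $T$ and uniformly bounded; hence $\mathbb{E}_{\theta+\sqrt{\epsilon}v}|\nabla_{u}M_{\epsilon}(\theta,v)|^{2m}\leq C_{m}$ for a constant independent of $\epsilon\in(0,1)$, of $\theta$ in the compact $\tilde{\Theta}$, and of $v$ with $|v|<r$ (restricting to $\epsilon$ small enough that $\theta+\sqrt{\epsilon}v$ remains in $\Theta$, which is harmless since we may absorb finitely many large $\epsilon$ into $C$). Feeding this back yields $|u_{2}-u_{1}|^{-2m}\,\mathbb{E}_{\theta}|G_{\epsilon}(\theta,u_{2})-G_{\epsilon}(\theta,u_{1})|^{2m}\leq C_{m}(2m)^{-2m}=:C$, and picking any $m>d/2$ finishes the proof. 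I expect the main obstacle to be precisely the $1/\sqrt{\epsilon}$ factor in $\nabla_{u}M_{\epsilon}$: a direct moment estimate under $\mathbb{P}_{\theta}$ would leave a residual drift of size $\sqrt{\epsilon}\cdot(1/\sqrt{\epsilon})=O(1)$ together with the exponential weight to control, whereas shifting the measure to $\mathbb{P}_{\theta+\sqrt{\epsilon}v}$ simultaneously eliminates the weight and turns the gradient into a pure martingale, making the uniform bound transparent.
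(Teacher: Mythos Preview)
Your proposal is correct and follows essentially the same approach as the paper's proof, which is itself an adaptation of Lemma~2.3 in \cite{Kutoyants1994}. The only cosmetic difference is the order of operations: the paper first performs a change of measure from $\mathbb{P}_{\theta}$ to $\mathbb{P}_{\theta_{1}}$ (reducing to $\mathbb{E}_{\theta_{1}}|L^{1/(2m)}(\theta_{2},\theta_{1})-1|^{2m}$) and then applies the fundamental theorem of calculus along the segment $\theta(\ell)=\theta_{1}+(\theta_{2}-\theta_{1})\ell$, whereas you first apply the FTC to $G_{\epsilon}(\theta,\cdot)$ along $v(s)=u_{1}+s(u_{2}-u_{1})$ and only afterwards shift the measure to $\mathbb{P}_{\theta+\sqrt{\epsilon}v}$. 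Both routes arrive at the same stochastic integral $\int_{0}^{T}\langle\nabla_{\theta}c_{\theta(\ell)},\sigma\,dW\rangle_{\alpha}$ with bounded integrand $S=\sigma^{-1}\nabla_{\theta}c$, and both conclude via a BDG/moment bound on that integral; your formulation makes the cancellation of the $1/\sqrt{\epsilon}$ factor slightly more transparent.
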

The proof of this lemma is presented in the appendix.

\begin{lemma}\label{L:CLT_Condition3}
Under the conditions of Theorem \ref{T:CLT}, and for any $p\in(0,1)$ and compact $\tilde{\Theta}\subset \Theta$ there exists a function $g_{\tilde{\Theta},p}(\left\Vert u \right\Vert)$ with the property
\[
\lim_{u\rightarrow\infty}u^{n}e^{-g_{\tilde{\Theta},p}(\left\Vert u\right\Vert)}=0,\quad \forall n\in\mathbb{N}
\]
such that
\[
\sup_{\theta\in\tilde{\Theta}}\mathbb{E}_{\theta}e^{p M_{\epsilon}(\theta,u)}\leq e^{-g_{\tilde{\Theta},p}(\left\Vert u\right\Vert)}
\]
\end{lemma}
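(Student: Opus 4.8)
The plan is to exploit the exact Gaussian structure of the likelihood ratio between two diffusions that share the common diffusion coefficient $\sqrt{\epsilon}\sigma$. Under $\mathbb{P}_{\theta}$ (with $b=0$ throughout this section) the normed log-likelihood ratio is a continuous local martingale minus half its bracket, $M_{\epsilon}(\theta,u)=N_{T}-\tfrac{1}{2}\langle N\rangle_{T}$, where $N_{t}=\tfrac{1}{\sqrt{\epsilon}}\int_{0}^{t}\langle c_{\theta+\sqrt{\epsilon}u}-c_{\theta},\sigma\,dW_{s}\rangle_{\alpha}(x_{s},x_{s}/\delta)$ and $\langle N\rangle_{T}=\tfrac{1}{\epsilon}\int_{0}^{T}\|c_{\theta+\sqrt{\epsilon}u}-c_{\theta}\|_{\alpha}^{2}(x_{s},x_{s}/\delta)\,ds$. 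Since $Z_{\epsilon}(u):=e^{M_{\epsilon}(\theta,u)}=d\mathbb{P}_{\theta+\sqrt{\epsilon}u}/d\mathbb{P}_{\theta}$ is a genuine likelihood ratio, it suffices to treat $u$ in the rescaled set $U_{\epsilon}:=\{u:\theta+\sqrt{\epsilon}u\in\Theta\}$; the convexity of $\Theta$ (Condition \ref{A:Assumption1}) then guarantees that the whole segment $\{\theta+v\sqrt{\epsilon}u:v\in[0,1]\}$ lies in $\Theta$, so every object below is well defined, and all estimates will be produced uniformly over $\theta\in\tilde{\Theta}$.

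The first step is an exact identity for the fractional moment. Completing the square, with $\mathcal{E}_{t}=\exp(pN_{t}-\tfrac{p^{2}}{2}\langle N\rangle_{t})$ the Dol\'eans exponential of $pN$, one has $Z_{\epsilon}(u)^{p}=\mathcal{E}_{T}\exp(-\tfrac{p(1-p)}{2}\langle N\rangle_{T})$. Because $c$ is bounded (Condition \ref{A:Assumption1}), $\langle N\rangle_{T}\le 4\|c\|_{\infty}^{2}T/\epsilon$ is bounded for each fixed $\epsilon$, so Novikov's criterion applies and $\mathcal{E}$ is a true $\mathbb{P}_{\theta}$-martingale with $\mathbb{E}_{\theta}\mathcal{E}_{T}=1$. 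Defining the tilted measure $\mathbb{Q}_{p}$ by $d\mathbb{Q}_{p}/d\mathbb{P}_{\theta}=\mathcal{E}_{T}$ yields the clean formula
\[
\mathbb{E}_{\theta}\,e^{pM_{\epsilon}(\theta,u)}=\mathbb{E}_{\mathbb{Q}_{p}}\exp\!\left(-\frac{p(1-p)}{2}\langle N\rangle_{T}\right).
\]
A Girsanov computation shows that under $\mathbb{Q}_{p}$ the process $X^{\epsilon}$ solves the same small-noise multiscale SDE but with drift $(1-p)c_{\theta}+p\,c_{\theta+\sqrt{\epsilon}u}$, which is again a bounded, Lipschitz, locally periodic drift; hence the averaging principle of Theorem \ref{T:LLN} continues to hold under $\mathbb{Q}_{p}$, while $\langle N\rangle_{T}$ is unchanged as a path functional.

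The second step is a lower bound $\langle N\rangle_{T}\gtrsim|u|^{2}$ on the average. By the mean value theorem $c_{\theta+\sqrt{\epsilon}u}-c_{\theta}=\sqrt{\epsilon}\,\bar{A}(s,u)u$ with $\bar{A}(s,u)=\int_{0}^{1}\nabla_{\theta}c_{\theta+v\sqrt{\epsilon}u}\,dv$, so $\langle N\rangle_{T}=\int_{0}^{T}|\sigma^{-1}\bar{A}(s,u)u|^{2}\,ds$, and its averaged limit along the (tilted) limiting trajectory is controlled by $\rho(r):=\inf_{|\hat{u}|=1}\int_{0}^{T}\int_{\mathcal{Y}}\|c_{\theta+r\hat{u}}-c_{\theta}\|_{\alpha}^{2}\,\mu(dy;\cdot)\,ds$. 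Two facts combine: the positive-definiteness of the Fisher information, assumed uniformly in $\theta$ in Condition \ref{A:AssumptionCLT}, gives $\rho(r)\ge\tfrac{c_{0}}{2}r^{2}$ for small $r$ (via the quadratic term $r^{2}\langle I(\theta)\hat{u},\hat{u}\rangle$ arising from $S=\sigma^{-1}\nabla_{\theta}c_{\theta}$), while the identifiability/separation encoded in Condition \ref{cond:identifiability} (together with the completed-square form of $\bar{Z}^{i}$ in Theorem \ref{T:LikelihoodConvergence1}) gives $\rho(r)\ge\rho_{\min}>0$ on any compact subinterval of $(0,\mathrm{diam}\,\Theta]$. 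Since $u\in U_{\epsilon}$ forces $\sqrt{\epsilon}|u|\le\mathrm{diam}\,\Theta$, and hence $1/\epsilon\ge|u|^{2}/(\mathrm{diam}\,\Theta)^{2}$, these two regimes patch together into a single bound $\tfrac{1}{\epsilon}\rho(\sqrt{\epsilon}|u|)\ge\kappa|u|^{2}$ with $\kappa>0$ independent of $\epsilon$.

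The final step, which I expect to be the main obstacle, is to upgrade this lower bound on the average into a high-probability lower bound on the random $\langle N\rangle_{T}$, uniformly in both $\epsilon\in(0,1)$ and $u\in U_{\epsilon}$, under the tilted measures $\mathbb{Q}_{p}$. The difficulty is intrinsic: positive-definiteness holds only after integrating against $\mu$ and over $s$, so $\sigma^{-1}\bar{A}(s,u)u$ may vanish at individual space points and no pathwise bound is available, while the range $|u|\lesssim 1/\sqrt{\epsilon}$ forbids any naive appeal to the $\epsilon\downarrow 0$ averaging limit. Here I would invoke the quantitative averaging furnished by the Lipschitz-in-$x$ control of $\sqrt{m^{i}_{\theta}}\,\sigma^{-1}c_{\theta}$ and the continuity-in-probability of $q^{1/2}_{i}$ in Condition \ref{A:AssumptionCLT}, which yield an averaging-error estimate for $\langle N\rangle_{T}$ that is uniform in $\epsilon$ and gives $\mathbb{Q}_{p}[\langle N\rangle_{T}<\tfrac{\kappa}{2}|u|^{2}]\le\psi(|u|)$ with $\psi$ decaying superpolynomially. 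Splitting the expectation over $\{\langle N\rangle_{T}\ge\tfrac{\kappa}{2}|u|^{2}\}$ and its complement then gives
\[
\mathbb{E}_{\theta}\,e^{pM_{\epsilon}(\theta,u)}\le e^{-\frac{p(1-p)\kappa}{4}|u|^{2}}+\psi(|u|),
\]
so the choice $g_{\tilde{\Theta},p}(\|u\|)=c\,\|u\|^{2}$ for a suitably small $c>0$ satisfies $\|u\|^{n}e^{-g_{\tilde{\Theta},p}(\|u\|)}\to 0$ for every $n$, which proves the lemma. Everything apart from this uniform-in-$\epsilon$ concentration estimate is the exact likelihood-ratio algebra together with the averaging results already established.
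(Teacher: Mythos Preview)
Your opening reduction is essentially the same as the paper's: both arrive at the task of bounding an expression of the form $\mathbb{E}\exp\bigl(-\gamma\langle N\rangle_{T}\bigr)$. You do it by completing the square and changing to the tilted measure $\mathbb{Q}_{p}$, which is a clean variant of the paper's H\"{o}lder split with $p_{2}=q/p^{2}$; either route is fine. Your deterministic lower bound $\tfrac{1}{\epsilon}\rho(\sqrt{\epsilon}|u|)\ge\kappa|u|^{2}$ at the limiting trajectory is also the right target, and matches the paper's inequality $\int_{0}^{T}(v,q^{1/2}(\bar{X}_{s},\theta))^{2}\,ds\ge c_{0}\|v\|^{2}$.

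The genuine gap is exactly where you flag it: the step
\[
\mathbb{Q}_{p}\Bigl[\langle N\rangle_{T}<\tfrac{\kappa}{2}|u|^{2}\Bigr]\le\psi(|u|)\quad\text{with }\psi\text{ decaying superpolynomially}
\]
has no mechanism behind it. The integrand of $\langle N\rangle_{T}$ is essentially quadratic in $u$, so the event $\{\langle N\rangle_{T}<\tfrac{\kappa}{2}|u|^{2}\}$ is, after dividing by $|u|^{2}$, an averaging--error event whose probability is governed by $(\epsilon,\delta)$ and by the tails of $\sup_{t}\|W_{t}\|$, not by $|u|$. Neither the $L^{2}$-continuity of $q^{1/2}$ nor the Lipschitz condition on $\sqrt{m_{\theta}}\,\sigma^{-1}c_{\theta}$ produces a tail bound that decays in $|u|$; at best they give polynomial-in-$\epsilon$ averaging rates, and you would still have to convert those into a statement indexed by $|u|$, uniformly in $\epsilon$. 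Splitting on an indicator therefore cannot close the argument. A secondary issue is that under $\mathbb{Q}_{p}$ the limiting trajectory itself depends on $u$ through the drift $(1-p)c_{\theta}+p\,c_{\theta+\sqrt{\epsilon}u}$, so the ``averaged limit'' you lower-bound by $\kappa|u|^{2}$ is not a fixed object and the uniform positive-definiteness of $I(\theta)$ has to be invoked along a $u$-dependent curve.

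What the paper does instead of a concentration bound is a \emph{pathwise} comparison. It introduces the corrector $\Phi$ solving $\mathcal{L}_{x}\Phi=\|\Delta c_{\theta}\|_{\alpha}^{2}-(v,q_{\epsilon,v}^{1/2})^{2}$ on the torus and applies It\^{o}'s formula to $\Phi(X^{\epsilon},X^{\epsilon}/\delta)$, which replaces the fast-oscillating integrand by its $y$-average plus an explicit error controlled by $1+\sup_{t}\|W_{t}\|$. A second corrector argument and Gr\"{o}nwall give $\sup_{t}\|X^{\epsilon}_{t}-\bar{X}_{t}\|\le C\sqrt{\epsilon+\delta^{2}/\epsilon}\,\sup_{t}\|W_{t}\|$, and the Lipschitz hypothesis on $\sqrt{m_{\theta}}\,\sigma^{-1}c_{\theta}$ then turns this into a lower bound of the shape $\langle N\rangle_{T}\ge C_{2}\|u\|^{2}-C_{9}\|u\|\sup_{t}\|W_{t}\|-C(1+\sup_{t}\|W_{t}\|)$. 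The crucial point is that the error is \emph{linear} in $\|u\|$ while the main term is quadratic; taking the exponential and using the Gaussian bound $\mathbb{E}\exp(\lambda\sup_{t}\|W_{t}\|)\le 1+C\lambda e^{C\lambda^{2}}$, then choosing $\gamma$ small enough, yields $\mathbb{E}_{\theta}e^{pM_{\epsilon}(\theta,u)}\le e^{-c\|u\|^{2}+c'\|u\|}$, which is the required superpolynomial decay. If you want to repair your argument, replace the indicator split by this corrector--It\^{o} computation (carried out under $\mathbb{Q}_{p}$ if you prefer your measure change), and the rest of your outline goes through.
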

The proof of this lemma is presented in the appendix.


\section{First Order Langevin Equation}\label{S:Examples}

A particular model of interest is the first order Langevin equation
\begin{equation*}
dX^{\epsilon}_{t}=-\nabla V^{\epsilon}_{\theta}\left(  X^{\epsilon}_{t},\frac
{X^{\epsilon}_{t}}{\delta}\right)  dt+\sqrt{\epsilon}\sqrt{2D}dW_t,\hspace
{0.2cm}X^{\epsilon}_{0}=x_{0},
\end{equation*}
where $V^{\epsilon}$ is some potential function and $2D$ the diffusion
constant. We are particularly interested in the case where the potential
function $V^{\epsilon}$ is composed of a large-scale smooth part and a fast
oscillating part of smaller magnitude:
\begin{equation}
V^{\epsilon}_{\theta}\left(  x,x/\delta\right)  =\epsilon Q(x/\delta)+\theta V(x).
\end{equation}
Thus the equation of interest can be written as
\begin{equation}
dX_{t}^{\epsilon}=\left[  -\frac{\epsilon}{\delta}\nabla Q\left(  \frac
{X_{t}^{\epsilon}}{\delta}\right)  -\theta\nabla V\left(  X_{t}^{\epsilon}\right)
\right]  dt+\sqrt{\epsilon}\sqrt{2D}dW_{t},\hspace{0.2cm}X_{0}^{\epsilon
}=x_{0}, \label{Eq:LangevinEquation2}
\end{equation}
An example of such a potential is given in Figure 1.
\begin{figure}[!h]
\label{F:Figure1}
\par
\begin{center}
\includegraphics[scale=0.4, width=6 cm, height=8 cm, angle=-90]{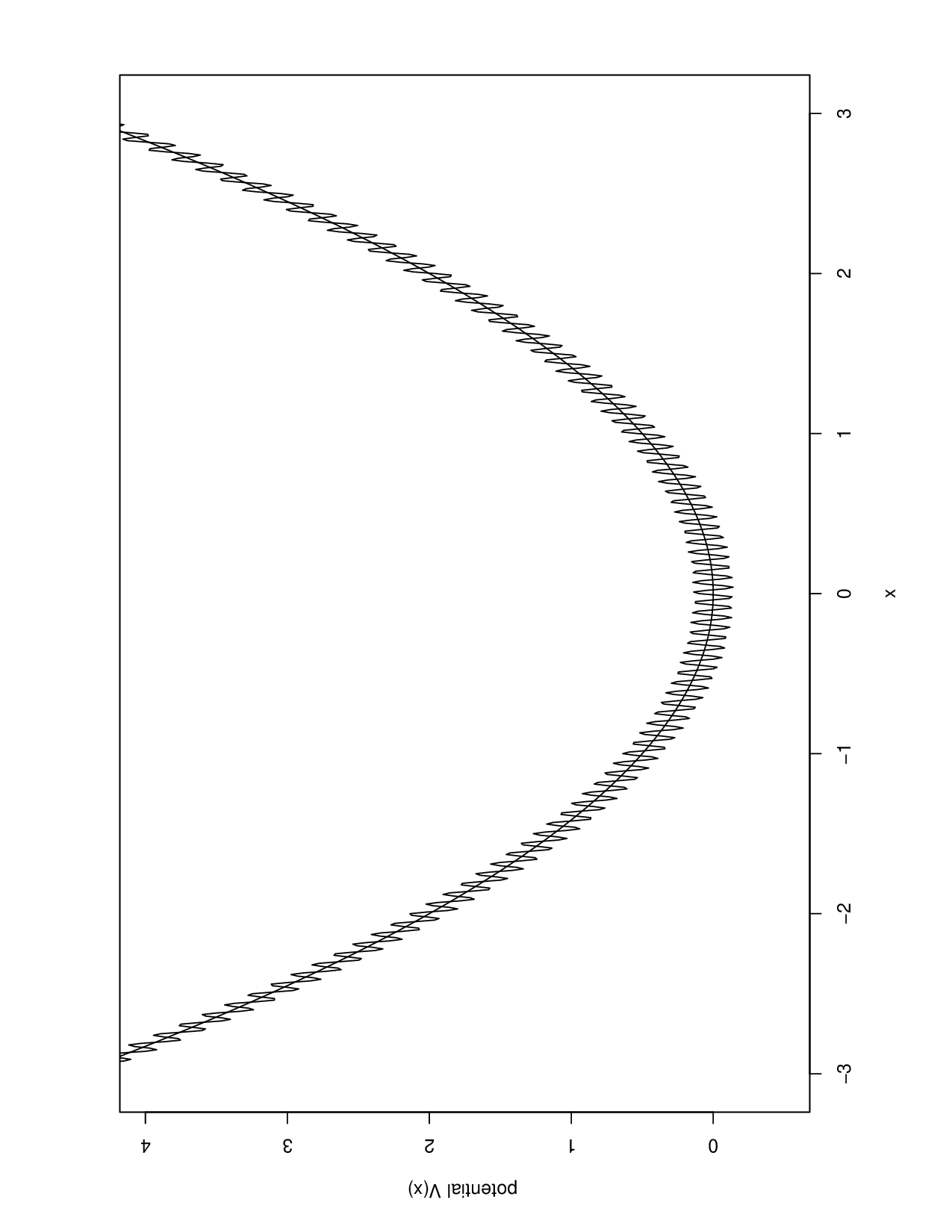}
\end{center}
\caption{ $V^{\epsilon}(x,\frac{x}{\delta})=V_{\theta}(x)+\epsilon Q(\frac{x}{\delta})$ with $V_{\theta}(x)=\frac{\theta}{2}x^{2}$,  $Q(\frac{x}{\delta})=  \cos(\frac
{x}{\delta})+\sin(\frac{x}{\delta})$ and parameters
 $\epsilon=0.1$, $\delta=0.01$ and $\theta=1$.}%
\end{figure}

For the potential function drawn in Figure \ref{F:Figure1}, the unkown parameter $\theta$ corresponds to the curvature of $V(x)$ around the equilibrium point.

We are interested in the statistical estimation problem for the parameter $\theta$ in the case of Regime $1$, i.e., when $\epsilon/\delta\uparrow\infty$. In Subsection \ref{SS:LangevinEquationMLE} we study the estimation problem for $\theta$ based on the methodology described in Subsection \ref{S:Homogenization}. In Subsection \ref{SS:LangevinEquationCLT} we study the corresponding central limit theorem. In Subsection \ref{SS:LangevinEquationSimulation} we present a simulation study.


\subsection{Pseudo Limiting Likelihood and Proposed Estimator for $\theta$} \label{SS:LangevinEquationMLE}
To connect to our notation let $b_{\theta}(x,y)=-\nabla Q(y)$, $c_{\theta}(x,y)=-\theta\nabla V(x)$, $\sigma(x,y)=\sqrt{2D}I$ and we consider Regime 1. In this case there is an explicit formula
for the invariant density $\mu(y)$, which is the Gibbs distribution
\begin{equation*}
\mu(dy)=\frac{1}{Z}e^{-\frac{Q(y)}{D}}dy,\hspace{0.2cm}Z=\int_{\mathcal{Y}%
}e^{-\frac{Q(y)}{D}}dy.
\end{equation*}
Moreover, it is easy to see that the centering Conditions \ref{A:Assumption2} and \ref{A:Assumption4}
hold. Notice that in this case the invariant measure does not depend neither on $x\in\mathbb{R}^{d}$, nor on $\theta\in\mathbb{R}$.
We also define
\begin{equation*}
\hat{Z}=\int_{\mathcal{Y}}e^{\frac{Q(y)}{D}}dy.
\end{equation*}
We have the following proposition.
\begin{proposition}\label{P:Langevin}
Under the conditions and notation of Theorem \ref{T:LikelihoodConvergence2} we have that the error term is given by
\begin{equation}
H_{\theta,\theta_{0}}(z_{\cdot})=\frac{\theta\theta_{0}}{2D}\int_{0}^{T}\left<\nabla V(x_{s}), \left(\int_{\mathcal{Y}} \frac{\partial \chi\left(y\right)}{\partial y}\mu(dy)\right)\nabla V(x_{s})\right>ds.\label{Eq:ErrorLangevinGeneral}
\end{equation}
\end{proposition}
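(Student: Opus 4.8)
The plan is to specialize the general formula for the ``bias'' term $H_{\theta,\theta_0}(z_\cdot)$ from Theorem \ref{T:LikelihoodConvergence2} to the Langevin setting, where the coefficients take the concrete form $b_\theta(x,y)=-\nabla Q(y)$, $c_\theta(x,y)=-\theta\nabla V(x)$, and $\sigma=\sqrt{2D}I$, so that $\alpha=\sigma\sigma^T=2D\,I$. Recall that
\[
H_{\theta,\theta_{0}}(z_{\cdot})=\int_{0}^{T}\int_{\mathcal{Y}} \left<c_{\theta_{0}}(x_{s},y), \nabla_{y}\Phi_{\theta,\theta_{0}}\left(x_{s},y\right)\right>\mu(dy)ds,
\]
where $\Phi_{\theta,\theta_0}$ solves the Poisson equation \eqref{Eq:PoissonEquation}. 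The main work is therefore to solve \eqref{Eq:PoissonEquation} explicitly in this setting and substitute into $H$. First I would compute the right-hand side of \eqref{Eq:PoissonEquation}: since $\langle b_{\theta_0},c_\theta\rangle_\alpha(x,y)=\frac{1}{2D}\langle -\nabla Q(y), -\theta\nabla V(x)\rangle=\frac{\theta}{2D}\langle \nabla Q(y),\nabla V(x)\rangle$, the source term factors into a $y$-dependent piece $\nabla Q(y)$ paired against the $x$-dependent vector $\nabla V(x)$.

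The key observation that drives the computation is the connection between $\Phi$ and the cell-problem corrector $\chi$ of \eqref{Eq:CellProblem}. In the Langevin case the operator is $\mathcal{L}^1_x=-\nabla Q(y)\cdot\nabla_y+D\,\Delta_y$, and the cell problem reads $\mathcal{L}^1_x\chi_\ell(y)=-b_\ell(y)=(\nabla Q(y))_\ell$ (note $\chi$ here is $x$- and $\theta$-independent because $b$ is). Comparing the two Poisson equations, I expect $\Phi_{\theta,\theta_0}(x,y)$ to be expressible as a linear combination of the components $\chi_\ell(y)$ with coefficients built from $\theta,\theta_0,D$ and $\nabla V(x)$. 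Specifically, since the source of \eqref{Eq:PoissonEquation} is $-\frac{\theta}{2D}\sum_\ell \partial_{x_\ell}V(x)\,\partial_{y_\ell}Q(y)$ and $\mathcal{L}^1_x\chi_\ell=(\nabla Q)_\ell$, linearity gives
\[
\Phi_{\theta,\theta_0}(x,y)=\frac{\theta}{2D}\sum_{\ell}\partial_{x_\ell}V(x)\,\chi_\ell(y),
\]
up to checking the normalization $\int_{\mathcal Y}\Phi\,\mu(dy)=0$, which holds because each $\chi_\ell$ is centered by \eqref{Eq:CellProblem}. I would verify this candidate satisfies both the PDE and the centering condition.

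Having $\Phi$ in hand, the plan is to substitute $\nabla_y\Phi_{\theta,\theta_0}(x,y)=\frac{\theta}{2D}\frac{\partial\chi}{\partial y}(y)\nabla V(x)$ into the formula for $H$ together with $c_{\theta_0}(x,y)=-\theta_0\nabla V(x)$. This produces
\[
H_{\theta,\theta_0}(z_\cdot)=\int_0^T\int_{\mathcal Y}\Big\langle -\theta_0\nabla V(x_s),\ \tfrac{\theta}{2D}\tfrac{\partial\chi}{\partial y}(y)\nabla V(x_s)\Big\rangle\mu(dy)\,ds,
\]
and pulling the $y$-independent factor $\nabla V(x_s)$ outside the $y$-integral collapses the inner integral to the matrix $\int_{\mathcal Y}\frac{\partial\chi}{\partial y}(y)\mu(dy)$. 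Tracking the sign (the two minus signs on $c$ and $b$ cancel to give the stated positive prefactor $\frac{\theta\theta_0}{2D}$) then yields \eqref{Eq:ErrorLangevinGeneral}. The main obstacle I anticipate is not analytic but bookkeeping: correctly propagating the factors of $2D$ coming from $\alpha^{-1}$ through both the Poisson source and the inner product in $H$, and confirming the sign, so that the $\tfrac{1}{2D}$ from the source and the $\tfrac{1}{2D}$ implicit in matching the corrector equation combine with the two drift signs to leave exactly $\tfrac{\theta\theta_0}{2D}$; I would double-check this by noting that $\langle\cdot,\cdot\rangle$ in the $H$-formula is the unweighted Euclidean inner product (not $\langle\cdot,\cdot\rangle_\alpha$), so only one factor of $2D$ survives.
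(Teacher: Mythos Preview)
Your approach is essentially the same as the paper's: both identify the key relation between $\Phi_{\theta,\theta_0}$ and the cell corrector $\chi$, then substitute into the definition of $H$. The paper simply states the relation $\Phi_{\theta}(x,y)=-\tfrac{\theta}{2D}\langle\chi(y),\nabla V(x)\rangle$ and plugs in, exactly as you propose.

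There is one sign slip in your computation. From $\mathcal{L}^1_x\Phi=-\tfrac{\theta}{2D}\sum_\ell\partial_{x_\ell}V\,\partial_{y_\ell}Q$ and $\mathcal{L}^1_x\chi_\ell=(\nabla Q)_\ell$, linearity gives $\Phi=-\tfrac{\theta}{2D}\sum_\ell\partial_{x_\ell}V\,\chi_\ell$, not $+\tfrac{\theta}{2D}$ as you wrote. With the correct sign, $\nabla_y\Phi=-\tfrac{\theta}{2D}\tfrac{\partial\chi}{\partial y}\nabla V$, and then $\langle c_{\theta_0},\nabla_y\Phi\rangle=\langle -\theta_0\nabla V,\,-\tfrac{\theta}{2D}\tfrac{\partial\chi}{\partial y}\nabla V\rangle$ produces the positive prefactor $\tfrac{\theta\theta_0}{2D}$. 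Your own displayed intermediate formula for $H$ actually carries a negative prefactor, so you would have caught this on a careful re-read; it is precisely the bookkeeping risk you flagged.
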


\begin{proof}
In the case $b_{\theta}(x,y)=-\nabla Q(y)$, $c_{\theta}(x,y)=-\theta\nabla V(x)$ and $\sigma(x,y)=\sqrt{2D}I$, we notice that the solution $\Phi$ to the Poisson equation (\ref{Eq:PoissonEquation}) is related to the solution of the cell problem $\chi$, (\ref{Eq:CellProblem}), via the relation
\begin{equation*}
\Phi_{\theta}(x,y)=-\theta \frac{1}{2D}\left<\chi(y),\nabla V(x)\right>.
\end{equation*}
Hence, we have that

\begin{eqnarray}
H_{\theta,\theta_{0}}(z_{\cdot})&=&\int_{0}^{T}\left<c_{\theta_{0}}(x_{s}), \int_{\mathcal{Y}} \nabla_{y}\Phi_{\theta}\left(x_{s},y\right)\mu(dy)\right>ds\nonumber\\
&=&\frac{\theta\theta_{0}}{2D}\int_{0}^{T}\left<\nabla V(x_{s}), \left(\int_{\mathcal{Y}} \frac{\partial \chi\left(y\right)}{\partial y}\mu(dy)\right)\nabla V(x_{s})\right>ds.\nonumber
\end{eqnarray}
This concludes the proof of the proposition.
\end{proof}

When we have a separable fluctuating part, i.e. $Q(y_{1},y_{2},\ldots
,y_{d})=Q_{1}(y_{1})+Q_{2}(y_{2})+\cdots+Q_{d}(y_{d})$, everything can be
calculated explicitly. We summarize the results in the following corollary. This corollary also shows that in this case we can derive a consistent estimator $\theta_{0}$ in closed form.

\begin{corollary}\label{C:corol}
Assume $Q(y_{1},y_{2}%
,\cdots,y_{d})=Q_{1}(y_{1})+Q_{2}(y_{2})+\cdots+Q_{d}(y_{d})$ and consider
Regime $1$. Under the conditions and notation of Theorem \ref{T:LikelihoodConvergence2}, we have that the error term is given by
\begin{equation*}
H_{\theta,\theta_{0}}(z_{\cdot})=\frac{\theta\theta_{0}}{2D}\int_{0}^{T}\left<\nabla V(x_{s}), \left(-I+\lambda^{2}\Gamma\right)\nabla V(x_{s})\right>ds,
\end{equation*}
where $\lambda>0$ is the (common) period of the functions $Q_{i}$ in the corresponding direction,
\begin{equation*}
\Gamma=\textrm{diag}\left[\frac{1}{Z_{1}\hat{Z}_{1}},\cdots,\frac{1}{Z_{d}\hat{Z}_{d}}\right]
\end{equation*}
and for $i=1,2,\ldots,d$
\begin{equation*}
Z_{i}=\int_{\mathbb{T}}e^{-\frac{Q_{i}(y_{i})}{D}}dy_{i},\hspace{0.2cm}\hat
{Z}_{i}=\int_{\mathbb{T}}e^{\frac{Q_{i}(y_{i})}{D}}dy_{i}.
\end{equation*}
Moreover, we have that $H_{\theta,\theta_{0}}(z_{\cdot})\leq 0$.

Furthermore, recall the MLE $\hat{\theta}_{\epsilon}$. If $\int_{0}^{T}\left\Vert \nabla V(x_{s})\right\Vert^{2}_{K}ds\neq 0$ for both $K=I$ and $K=\Gamma^{-1}$, then
\begin{equation*}
 \tilde{\theta}^{\epsilon}=\left(\lambda^{2}\frac{\int_{0}^{T}\left\Vert \nabla V(x_{s})\right\Vert^{2}_{\Gamma^{-1}}ds}{\int_{0}^{T}\left\Vert \nabla V(x_{s})\right\Vert^{2}_{I}ds}\right)^{-1}\hat{\theta}^{\epsilon}
\end{equation*}

converges in $\mathbb{P}_{\theta_{0}}$ probability to $\theta_{0}$, i.e., $\tilde{\theta}^{\epsilon}$ is a consistent estimator of $\theta_{0}$.
\end{corollary}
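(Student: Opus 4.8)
The plan is to establish the three assertions of the corollary in turn, starting from Proposition~\ref{P:Langevin}. First I would evaluate $\int_{\mathcal{Y}}\frac{\partial\chi(y)}{\partial y}\,\mu(dy)$ explicitly. Because $\sigma=\sqrt{2D}\,I$ and $Q(y)=\sum_i Q_i(y_i)$ is separable, the cell operator $\mathcal{L}^1=-\nabla Q(y)\cdot\nabla_y+D\Delta_y$ decouples coordinatewise, so $\chi_\ell$ should depend on $y_\ell$ alone and solve the scalar problem $D\chi_\ell''-Q_\ell'\chi_\ell'=Q_\ell'$. Writing $\psi=\chi_\ell'$ and using the integrating factor $e^{-Q_\ell/D}$, this integrates to $\chi_\ell'(y_\ell)=-1+C_\ell e^{Q_\ell(y_\ell)/D}$, and imposing periodicity of $\chi_\ell$ (that is, $\int_{\mathbb{T}}\chi_\ell'\,dy_\ell=0$) fixes $C_\ell=\lambda/\hat{Z}_\ell$. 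Integrating $\chi_\ell'$ against the product Gibbs density $\mu(dy)=\prod_i Z_i^{-1}e^{-Q_i(y_i)/D}\,dy_i$ then shows that $\int_{\mathcal{Y}}\frac{\partial\chi}{\partial y}\,\mu(dy)$ is diagonal with $\ell$-th entry $-1+\lambda^2/(Z_\ell\hat{Z}_\ell)$, i.e.\ it equals $-I+\lambda^2\Gamma$; substituting into (\ref{Eq:ErrorLangevinGeneral}) yields the claimed formula.

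For the sign, I would apply Cauchy--Schwarz to $\lambda=\int_{\mathbb{T}}e^{-Q_i/2D}e^{Q_i/2D}\,dy_i$, obtaining $\lambda^2\le Z_i\hat{Z}_i$ and hence $-1+\lambda^2/(Z_i\hat{Z}_i)\le 0$ for each $i$. Thus $-I+\lambda^2\Gamma$ is negative semidefinite, and since $\theta\theta_0/(2D)>0$ the integrand of $H_{\theta,\theta_0}$ is pointwise nonpositive, so $H_{\theta,\theta_0}(z_\cdot)\le 0$.

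The consistency of $\tilde{\theta}^\epsilon$ is the substantive part. By Theorem~\ref{T:LikelihoodConvergence2} the pseudo log-likelihood converges, uniformly in $\theta$ and in $\mathbb{P}_{\theta_0}$-probability, to $\hat{Z}^1_{\theta,\theta_0,T}(\bar{X}^1_\cdot)=J^1_{\theta,\theta_0}+H_{\theta,\theta_0}$, so I would first isolate the $\theta$-dependence of this limit. With $c_\theta=-\theta\nabla V$ and $\alpha=2D\,I$, the relevant part of $J^1_{\theta,\theta_0}(\bar{X}^1_\cdot)$ is $\frac{\theta\theta_0}{2D}A-\frac{\theta^2}{4D}A$, where $A=\int_0^T\|\nabla V(\bar{X}^1_s)\|_I^2\,ds$, while $H_{\theta,\theta_0}(\bar{X}^1_\cdot)=\frac{\theta\theta_0}{2D}(\lambda^2 C-A)$ with $C=\int_0^T\|\nabla V(\bar{X}^1_s)\|_{\Gamma^{-1}}^2\,ds$. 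The two $A$-terms cancel, leaving the strictly concave quadratic $\frac{\lambda^2 C}{2D}\theta\theta_0-\frac{A}{4D}\theta^2$, whose unique maximizer is $\lambda^2\frac{C}{A}\theta_0$. An argmax-continuity argument as in the consistency proof of Section~\ref{S:Averaging}---relying on uniqueness of this maximizer together with the uniform convergence---then gives that $\hat{\theta}^\epsilon$ converges in $\mathbb{P}_{\theta_0}$-probability to $\lambda^2\frac{C}{A}\theta_0$. Separately, Theorem~\ref{T:LLN} and the continuity of $\nabla V$ give that the data-dependent factor $R^\epsilon=\lambda^2\frac{\int_0^T\|\nabla V(X^\epsilon_s)\|_{\Gamma^{-1}}^2\,ds}{\int_0^T\|\nabla V(X^\epsilon_s)\|_I^2\,ds}$ converges in probability to $\lambda^2\frac{C}{A}$, the hypothesis that both integrals are nonzero ensuring that $R^\epsilon$ is well defined with nonzero limit. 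Slutsky's theorem applied to the quotient then yields $\tilde{\theta}^\epsilon=(R^\epsilon)^{-1}\hat{\theta}^\epsilon\to\theta_0$ in $\mathbb{P}_{\theta_0}$-probability.

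The hard part will be this last step: passing from uniform convergence of the objective (Theorem~\ref{T:LikelihoodConvergence2}) to convergence of its maximizer requires identifiability of the limiting maximizer, which I secure through the strict concavity of the limiting quadratic, and it then requires combining the limit of the biased estimator $\hat{\theta}^\epsilon$ with the random correction factor $R^\epsilon$---computed along $X^\epsilon$ rather than $\bar{X}^1$---so that the law of large numbers of Theorem~\ref{T:LLN} makes the bias factor and its sample estimate cancel exactly in the limit.
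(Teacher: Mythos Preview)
Your proposal is correct and takes essentially the same route as the paper: explicitly solving the scalar cell ODE to obtain $\chi_\ell'(y_\ell)=-1+(\lambda/\hat Z_\ell)e^{Q_\ell(y_\ell)/D}$, applying Cauchy--Schwarz (the paper says H\"older) for the sign of $-I+\lambda^{2}\Gamma$, and then identifying the unique maximizer of the limiting quadratic before invoking Theorem~\ref{T:LikelihoodConvergence2}. You spell out more detail than the paper's terse proof on the argmax-continuity step and on passing the data-dependent normalizing factor to its limit via Theorem~\ref{T:LLN} and Slutsky, but the substance is the same.
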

\begin{proof}
The separability assumption of $Q(y)$ gives us
\begin{equation*}
\frac{\partial \chi}{\partial y}(y)=\textrm{diag}\left(-1+\frac{\lambda}{\hat{Z}_{1}}e^{\frac{Q_{1}(y_{1})}{D}},\cdots, -1+\frac{\lambda}{\hat{Z}_{d}}e^{\frac{Q_{d}(y_{d})}{D}}\right).
\end{equation*}
Plugging that into (\ref{Eq:ErrorLangevinGeneral}) we immediately get the simplified representation of the error term.

The second claim follows from H\"{o}lder inequality. Indeed, it is easy to see that $\frac{\lambda^{2}}{Z\hat{Z}}\leq 1$. Therefore, we obtain $H_{\theta,\theta_{0}}(z_{\cdot})\leq 0$.

Next, we maximize the limiting log-likelihood function. By straightforward substitution to (\ref{Eq:MainTermRegime1general}) we see that
\begin{equation*}
J^{1}_{\theta,\theta_{0},T}(z_{\cdot})=\frac{1}{2}T\int_{\mathcal{Y}}\left\Vert \nabla Q\left(y\right)\right\Vert^{2}_{2D I}\mu(dy)
+\frac{1}{2D}\left(\theta\theta_{0}-\frac{1}{2}\theta^{2}\right)\int_{0}^{T}\left\Vert \nabla V\left(x_{s}\right)\right\Vert^{2}_{I}ds.
\end{equation*}
We collect things together and write
\begin{equation*}
\hat{Z}^{1}_{\theta,\theta_{0},T}(z_{\cdot})=\frac{1}{2}T\int_{\mathcal{Y}}\left\Vert  \nabla Q\left(y\right)\right\Vert^{2}_{2D I}\mu(dy)+
\frac{1}{2D}\left(-\frac{1}{2}\theta^{2}\int_{0}^{T}\left\Vert \nabla V(x_{s})\right\Vert^{2}_{I}ds+\theta\theta_{0}\lambda^{2}\int_{0}^{T}\left\Vert \nabla V(x_{s})\right\Vert^{2}_{\Gamma^{-1}}ds\right)
\end{equation*}
Then, it is easy to see that this quantity is maximized for
\begin{equation}
\hat{\theta}=\theta_{0}\lambda^{2}\frac{\int_{0}^{T}\left\Vert \nabla V(x_{s})\right\Vert^{2}_{\Gamma^{-1}}ds}{\int_{0}^{T}\left\Vert \nabla V(x_{s})\right\Vert^{2}_{I}ds}.\label{Eq:LangevinEquationEstimator}
\end{equation}

Then, using Theorem \ref{T:LikelihoodConvergence2} we obtain the statement of the theorem.
\end{proof}


\subsection{Central Limit Theorem for the pseudo MLE}\label{SS:LangevinEquationCLT}

In this section, we prove a central limit for the maximum likelihood estimator of the first order Langevin equation (\ref{Eq:LangevinEquation2}).

Based on the modified log likelihood function (i.e., on (\ref{Eq:Likelihood1})),  the maximum likelihood estimator can be written
\begin{eqnarray}\label{Eq:ModMle}
\hat{\theta}^{\epsilon} &=&  \left\{\left(\left(\frac{\delta}{\epsilon}\right)^{2}+1\right)\int_{0}^{T}\left\Vert\nabla V(x_{s})\right\Vert^{2}ds\right\}^{-1}
\cdot \Biggl\{\frac{\epsilon}{\delta}\int_{0}^{T}\left<\nabla V(x_{s}),\nabla Q(\frac{x_{s}}{\delta})\right>ds\nonumber \\
&& - \sqrt{2D}\sqrt{\epsilon}\left(\left(\frac{\delta}{\epsilon}\right)^{2}+1\right)\int_{0}^{T}\left<\nabla V(x_{s}),dW_{s}\right>  +\theta_{0}\left(\left(\frac{\delta}{\epsilon}\right)^{2}+1\right)\int_{0}^{T}\left\Vert\nabla V(x_{s})\right\Vert^{2}ds\Biggr\}.
\end{eqnarray}
Some algebra manipulation in (\ref{Eq:ModMle}) gives us
\begin{equation}
\frac{1}{\sqrt{\epsilon}}\left(\hat{\theta}^{\epsilon}-\theta_{0}-\frac{\frac{\epsilon}{\delta}\int_{0}^{T}\left<\nabla V(x_{s}),\nabla Q(\frac{x_{s}}{\delta})\right>ds}{\left(\left(\frac{\delta}{\epsilon}\right)^{2}+1\right)\int_{0}^{T}\left\Vert\nabla V(x_{s})\right\Vert^{2}ds} \right)=-
\sqrt{2D}\frac{\int_{0}^{T}\left<\nabla V(x_{s}),dW_{s}\right>
}{\int_{0}^{T}\left\Vert\nabla V(x_{s})\right\Vert^{2}ds}.\label{Eq:MLE_Langevin}
\end{equation}
We have the following theorem
\begin{theorem}
Assume Condition \ref{A:Assumption1}. Consider the first order Langevin equation (\ref{Eq:LangevinEquation2}) and assume Regime $1$.  Let $\hat{\theta}^{\epsilon}$ be the maximum likelihood estimator of $\theta_{0}$
based on the modified log likelihood function $\hat{Z}^{\epsilon}_{\theta,T}$.
Then, we have that in distribution under $\mathbb{P}_{\theta_{0}}$, the following central limit result holds
\begin{equation*}
\frac{1}{\sqrt{\epsilon}}\left(\hat{\theta}^{\epsilon}-\theta_{0}-\frac{\frac{\epsilon}{\delta}\int_{0}^{T}\left<\nabla V(x_{s}),\nabla Q(\frac{x_{s}}{\delta})\right>ds}{\left(\left(\frac{\delta}{\epsilon}\right)^{2}+1\right)\int_{0}^{T}\left\Vert\nabla V(x_{s})\right\Vert^{2}ds} \right)
\Rightarrow N\left(0,2D\left(\int_{0}^{T}\left\Vert\nabla V(\bar{X}^{1}_{s})\right\Vert^{2}ds\right)^{-1}\right).
\end{equation*}
Moreover, assuming $Q(y_{1},y_{2}%
,\cdots,y_{d})=Q_{1}(y_{1})+Q_{2}(y_{2})+\cdots+Q_{d}(y_{d})$, we also have that in $\mathbb{P}_{\theta_{0}}$ probability
\begin{equation*}
\lim_{\epsilon\downarrow 0}\left(\theta_{0}+\frac{\frac{\epsilon}{\delta}\int_{0}^{T}\left<\nabla V(x_{s}),\nabla Q(\frac{x_{s}}{\delta})\right>ds}{\left(\left(\frac{\delta}{\epsilon}\right)^{2}+1\right)\int_{0}^{T}\left\Vert\nabla V(x_{s})\right\Vert^{2}ds}\right)=
\theta_{0}\lambda^{2}\frac{\int_{0}^{T}\left\Vert \nabla V(\bar{X}^{1}_{s})\right\Vert^{2}_{\Gamma^{-1}}ds}{\int_{0}^{T}\left\Vert \nabla V(\bar{X}^{1}_{s})\right\Vert^{2}_{I}ds}.
\end{equation*}
which is (\ref{Eq:LangevinEquationEstimator}).
\end{theorem}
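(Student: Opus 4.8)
The plan is to argue directly from the explicit representation (\ref{Eq:MLE_Langevin}), whose right-hand side is $-\sqrt{2D}\,\big(\int_0^T\langle\nabla V(x_s),dW_s\rangle\big)\big/\big(\int_0^T\|\nabla V(x_s)\|^2\,ds\big)$ with $x_s=X^\epsilon_s$ the observed path of (\ref{Eq:LangevinEquation2}) run at $\theta=\theta_0$. First I would handle the denominator: since $\nabla V$ is bounded and Lipschitz by Condition \ref{A:Assumption1} and $X^\epsilon_\cdot\to\bar X^1_\cdot$ uniformly in $\mathbb{P}_{\theta_0}$-probability by Theorem \ref{T:LLN}, it converges in probability to the deterministic constant $\sigma^2:=\int_0^T\|\nabla V(\bar X^1_s)\|^2\,ds$. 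For the numerator I would use the It\^o isometry together with bounded convergence, $\mathbb{E}\int_0^T\|\nabla V(X^\epsilon_s)-\nabla V(\bar X^1_s)\|^2\,ds\to0$, to conclude that the martingale $\int_0^T\langle\nabla V(X^\epsilon_s),dW_s\rangle$ converges in $L^2$ (hence in probability) to $\int_0^T\langle\nabla V(\bar X^1_s),dW_s\rangle$; as $\bar X^1$ is deterministic, this limit is exactly Gaussian with law $N(0,\sigma^2)$.

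Combining the two limits by Slutsky's theorem (the denominator limit being a strictly positive constant, where one needs $\sigma^2>0$), the ratio converges to $N(0,\sigma^2)/\sigma^2\sim N(0,\sigma^{-2})$, and the prefactor $-\sqrt{2D}$ yields the stated law $N\big(0,2D\,\sigma^{-2}\big)$. This establishes the first assertion.

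For the second assertion I would identify the limit of the recentering (``bias'') term. As $\epsilon\downarrow0$ one has $(\delta/\epsilon)^2+1\to1$ and the denominator again tends to $\int_0^T\|\nabla V(\bar X^1_s)\|_I^2\,ds$, so the only nontrivial object is $\tfrac{\epsilon}{\delta}\int_0^T\langle\nabla V(x_s),\nabla Q(x_s/\delta)\rangle\,ds$. Under the identity $\langle b_{\theta_0},c_\theta\rangle_\alpha=\tfrac{\theta}{2D}\langle\nabla Q,\nabla V\rangle$ this is, up to a $\theta$-factor, exactly the term $\tfrac{\epsilon}{\delta}K_2^\epsilon$ of (\ref{Eq:Likelihood1}): applying It\^o's formula to $\Phi(X^\epsilon_s,X^\epsilon_s/\delta)$ with $\Phi$ solving the Poisson equation (\ref{Eq:PoissonEquation}), the divergent $\epsilon/\delta$ contributions cancel and the remainder converges in $\mathbb{P}_{\theta_0}$-probability to the bias of Theorem \ref{T:LikelihoodConvergence2}. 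With the explicit corrector $\Phi_\theta(x,y)=-\theta(2D)^{-1}\langle\chi(y),\nabla V(x)\rangle$ of Proposition \ref{P:Langevin}, this gives $\tfrac{\epsilon}{\delta}\int_0^T\langle\nabla V(x_s),\nabla Q(x_s/\delta)\rangle\,ds\to\theta_0\int_0^T\langle\nabla V(\bar X^1_s),(\int_{\mathcal Y}\tfrac{\partial\chi}{\partial y}\,d\mu)\nabla V(\bar X^1_s)\rangle\,ds$, and the separable evaluation $\int_{\mathcal Y}(\partial\chi/\partial y)\,d\mu=-I+\lambda^2\Gamma$ of Corollary \ref{C:corol} turns this into $\theta_0\int_0^T\langle\nabla V,(-I+\lambda^2\Gamma)\nabla V\rangle\,ds$. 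Adding $\theta_0$ and dividing by $\int_0^T\|\nabla V\|_I^2\,ds$ collapses the $-I$ against the implicit $+I$ and leaves exactly (\ref{Eq:LangevinEquationEstimator}).

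The hard part is this divergent numerator: because $\epsilon/\delta\uparrow\infty$, one must show $\int_0^T\langle\nabla V,\nabla Q(\cdot/\delta)\rangle\,ds$ is of order $\delta/\epsilon$ and extract its limit, which is precisely the corrector/Poisson-equation cancellation borrowed from the proof of Theorem \ref{T:LikelihoodConvergence2}. Everything else reduces to the law of large numbers, the It\^o isometry, and the closed-form computations already recorded in Proposition \ref{P:Langevin} and Corollary \ref{C:corol}.
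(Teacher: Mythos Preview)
Your proposal is correct and matches the paper's approach. The paper derives the CLT directly from (\ref{Eq:MLE_Langevin}) via a central limit theorem for stochastic integrals (citing Lemma~1.8 of \cite{Kutoyants1994}), which your It\^o-isometry/$L^2$-convergence/Slutsky argument makes explicit; for the bias term it introduces the Poisson equation (\ref{Eq:PoissonEquationLangevin})---a rescaling of the one you borrow from Theorem~\ref{T:LikelihoodConvergence2}---applies It\^o's formula to cancel the $\epsilon/\delta$ divergence, and then invokes Proposition~\ref{P:Langevin} and Corollary~\ref{C:corol} exactly as you do.
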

\begin{proof}
The first statement  follows directly from the representation of the maximum likelihood estimator in (\ref{Eq:MLE_Langevin}) and the central limit theorem for stochastic integrals, see for
example Lemma 1.8 in Chapter I of \cite{Kutoyants1994}.

The second statement is as follows. Consider the unique, bounded and periodic in $y$ smooth solution of the auxiliary problem
\begin{equation}
\mathcal{L}^{1}_{x}\Phi(x,y)=-\left<\nabla V(x),\nabla Q(y)\right>, \qquad \int_{\mathcal{Y}}\Phi(x,y)\mu(dy)=0.\label{Eq:PoissonEquationLangevin}
\end{equation}
By applying It\^{o} formula to $\Phi(x,y)$, (compare with (\ref{Eq:ItoFormula1}) and (\ref{Eq:ItoFormula2})) we get
\begin{eqnarray}
\frac{\epsilon}{\delta}\int_{0}^{T}\left<\nabla V\left(x_{s}\right), \nabla Q\left(\frac{x_{s}}{\delta}\right)\right>ds&=&-\theta_{0}\int_{0}^{T}\left<\nabla V(x_{s}),\nabla_{y}\Phi\left(x_{s},\frac{x_{s}}{\delta}\right)\right> ds +o_{p}(1).\nonumber
\end{eqnarray}
where the term $o_{p}(1)$ converges to zero in probability as $\epsilon,\delta\downarrow 0$. Therefore, by substituting we obtain that
\begin{equation*}
\frac{\frac{\epsilon}{\delta}\int_{0}^{T}\left<\nabla V(x_{s}),\nabla Q(\frac{x_{s}}{\delta})\right>ds}{\left(\left(\frac{\delta}{\epsilon}\right)^{2}+1\right)\int_{0}^{T}\left\Vert\nabla V(x_{s})\right\Vert^{2}ds}=
-\frac{\theta_{0}\int_{0}^{T}\left<\nabla V(x_{s}),\nabla_{y}\Phi\left(x_{s},\frac{x_{s}}{\delta}\right)\right> ds}{\left(\left(\frac{\delta}{\epsilon}\right)^{2}+1\right)\int_{0}^{T}\left\Vert\nabla V(x_{s})\right\Vert^{2}ds}+ o_{p}(1),
\end{equation*}
 Then, as in Proposition \ref{P:Langevin} and  Corollary \ref{C:corol} we can solve the auxiliary PDE (\ref{Eq:PoissonEquationLangevin})
in closed form and obtain the statement of the theorem.
\end{proof}

\subsection{A Simulation Study}\label{SS:LangevinEquationSimulation}

We apply our results in the case when $V^{\epsilon}(x,\frac{x}{\delta})=\epsilon \left(  \cos(\frac{x}{\delta}) +\sin(\frac{x}{\delta}) \right)  + \frac{1}{2}x^{2}$ and
$V(x)=\frac{1}{2}x^{2}$.

As we discussed in the previous sections, in this case we need to work with the modified likelihood, since $b\neq 0$. As we proved in Proposition 6.1 due to the separability of $Q$, we can obtain a consistent estimator when properly normalized.

We start by simulating the model. We use an Euler discretization scheme for the multiscale diffusion as follows
\[ X_{t_{k} +1}^{\epsilon} = X_{t_{k}}^{\epsilon} +  \left\{- \frac{\epsilon}{\delta} \left[ \cos\left(\frac{X_{t_{k}}^{\epsilon}}{\delta}\right) - \sin\left(\frac{X_{t_{k}}^{\epsilon}}{\delta}\right) \right] - \theta X_{t_{k}}^{\epsilon} \right\} (t_{k+1} - t_{k}) + \sqrt{\epsilon} \sqrt{2D} \left( W_{t_{k+1}} - W_{t_{k}} \right),   \]
where $k=1, \ldots, n$, $n$ is the number of simulated values. For the simulated data we choose $\epsilon=0.1$ and $\delta=0.01$.

From \cite{DupuisSpiliopoulosWang}, we have that the Euler scheme is bounded above by $\Delta \epsilon/\delta^{2}$, where we denote by $\Delta$ the discretization step. Therefore, if we want an error of order 0.001, we need to choose  the discretization step  $\Delta$ to be equal to $0.001\delta^{2}/\epsilon$. For the simulation procedure, we choose $\Delta=10^{-6}$ and $n=10^{6}$.

The maximum likelihood procedure consists of constructing the pseudo log-likelihood function (4.9). More specifically,
\begin{eqnarray*}
\hat{Z}_{\theta,T}^{\epsilon}(\mathcal{X}_{T})&=&\left(\frac{\delta}{\epsilon}\right)^{2}Z_{\theta,T}^{\epsilon}(\mathcal{X}_{T})+Z_{\theta,T}^{\epsilon}(\mathcal{X}_{T};0)\\
&=& \theta^{2} \left[ -\frac{1}{2\sigma^{2}} \int_{0}^{T} \left(\nabla V(x_{s})^{2} ds\right) \left( \left(\frac{\delta}{\epsilon}\right)^{2} + 1 \right) \right] \\
&& - \theta\; \frac{1}{\sigma^{2}} \left[ \int_{0}^{T} \nabla V(x_{s})dx_{s} \left( 1 + \left(\frac{\delta}{\epsilon}\right)^{2} \right) + \frac{\delta}{\epsilon} \int_{0}^{T} \nabla Q\left(\frac{x_{s}}{\delta}\right) \nabla V(x_{s}) ds \right] + const,
\end{eqnarray*}
where $\sigma=\sqrt{2D}$ and $const$ is a quantity that is independent of the parameter $\theta$. The maximizer of this quantity computes as
\begin{equation*}
\theta_{max} = \frac{\left[ \int_{0}^{T} \nabla V(x_{s})dx_{s} \left( 1 + \left(\frac{\delta}{\epsilon}\right)^{2} \right) + \frac{\delta}{\epsilon} \int_{0}^{T} \nabla Q\left(\frac{x_{s}}{\delta}\right) \nabla V(x_{s}) ds \right]}{- \int_{0}^{T} \left(\nabla V(x_{s})^{2} ds\right) \left( \left(\frac{\delta}{\epsilon}\right)^{2} + 1 \right) }.
\end{equation*}

Although our model is continuous as well as our MLE, in practice we obtain data in discrete time. Therefore, we need to discretize our estimator in order to implement it.
We directly discretize the stochastic integrals and we obtain
\begin{equation*}
\hat{\theta}_{max} = \frac{\left[ \sum_{i=1}^{N-1} \nabla V(x_{s_i})(x_{s_{i+1}} - x_{s_i}) \left( 1 + \left(\frac{\delta}{\epsilon}\right)^{2} \right) + \frac{\delta}{\epsilon} \sum_{i=1}^{N} \nabla Q\left(\frac{x_{s_i}}{\delta}\right) \nabla V(x_{s_i}) (s_{i+1}-s_i) \right]}{- \sum_{i=1}^{N-1} \left(\nabla V(x_{s_i})^{2} (s_{i+1}-s_i)\right) \left( \left(\frac{\delta}{\epsilon}\right)^{2} + 1 \right) }.
\end{equation*}
The consistent estimator $\hat{\theta}$ will be the normalized $\hat{\theta}_{max}$. The normalizing term equals $(\lambda \Gamma)^{2}$, with $\lambda$ and $\Gamma$ as defined in Corollary (\ref{C:corol}).
\begin{remark}
It is important to mention here that we do not simplify the stochastic integral using It\^o's lemma. The reason is that in order to compute the estimator for $\theta$ we need to use just the observations we have available. If we use It\^o, then the integral with respect to Brownian motion that appears contains a process (the Brownian motion) that is not observed.
\end{remark}

Using simulated data, we construct the MLE for different values of the true parameter $\theta$. The results are summarized in Table \ref{T:Table}, along with the corresponding 68\% and 95\% confidence intervals. These are both empirical intervals meaning that we repeat the procedure (simulation -- estimation) several times (M=100). Then, we obtain the Monte Carlo estimator for $\theta$ as the average of all estimators, as well as the Monte Carlo standard deviation that we use in the construction of the intervals.

\begin{table}[!h]
\label{T:Table}
\begin{center}
\begin{tabular}{|c||c|c|c|}
\hline
\textit{True Value} & \textit{Estimator} & \textit{68\% Confidence Interval} & \textit{95\% Confidence Interval}\\ \hline \hline
1   &  1.042 & ( -0.0329, 2.118) & (-1.065, 3.150) \\ \hline
2   &  1.970 & ( 0.1827, 3.758) & (-1.533, 5.473) \\ \hline
0.1 &  0.103 & ( 0.0125, 0.1928) & (-0.0739, 0.2795) \\ \hline
\end{tabular}
\end{center}
\vspace{0.1cm}
\caption{Estimated values of $\theta$ and the corresponding empirical 68\% and 95\% confidence intervals for various true parameters $\theta$.}
\end{table}

For $\theta=1$, we  plot (Figure \ref{F:Figure2}) the  histogram of the empirical distribution that we obtain from the Monte Carlo procedure. For comparison, on the same graph we also plot the corresponding density curve of the theoretical asymptotic (Normal) distribution with the appropriate variance as the one we computed in Theorem 6.3.

\begin{figure}[!h]
\label{F:Figure2}
\par
\begin{center}
\includegraphics[scale=0.6, width=10 cm, height=8 cm]{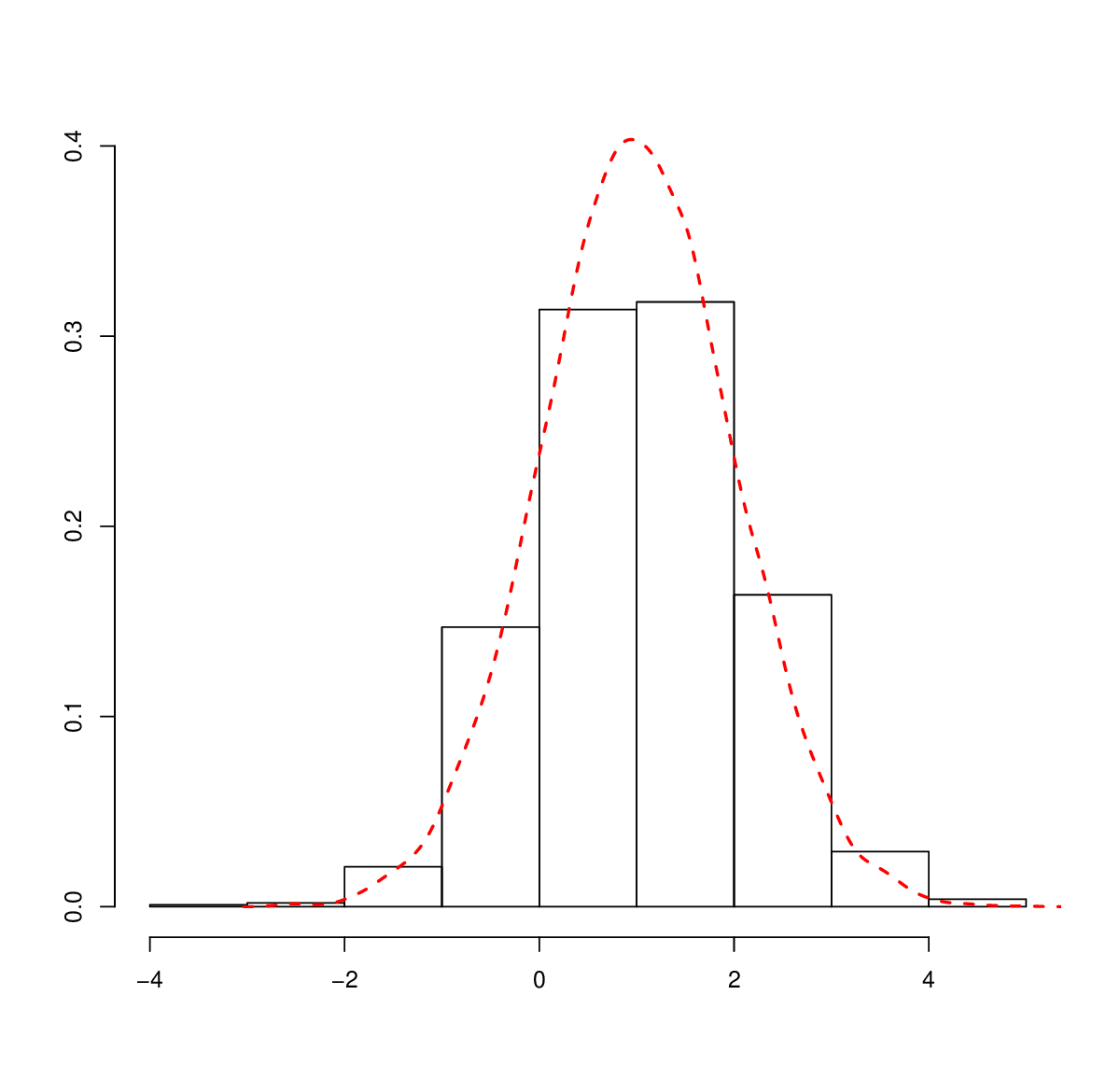}
\end{center}
\caption{Histogram of the estimator $\hat{\theta}_{max}$ for the simulated dataset and the corresponding density (theoretical) curve from Theorem 6.3.}
\end{figure}

\section{Conclusions}\label{S:Conclusions}

In this paper we studied the parameter estimation problem for diffusion processes with multiple scales and vanishing noise. Under certain conditions,
we derived consistent estimators and proved the related central limit theorems. The theoretical results are supported by a simulation study of the first order Langevin equation
 in a rough potential. Such results are useful when one is interested in parameter estimation of dynamical systems with more than one scales (e.g., in rough potentials) perturbed by small noise.

\section*{Appendix}

\begin{proof}[Proof of Lemma 5.3]
Let us denote $\theta_{\epsilon,u}=\theta_{\epsilon}+\phi(\epsilon,\theta_{\epsilon}) u_{\epsilon}$, where $\phi(\epsilon,\theta)=\sqrt{\epsilon}I^{-1/2}(\theta)$. We assume that  $\theta_{\epsilon}$ belongs in a compact subset of $\Theta$, denoted by $\tilde{\Theta}$, and let $u_{\epsilon}\rightarrow u$ as $\epsilon\downarrow 0$. We start by rewriting the normed likelihood ratio as follows
\begin{eqnarray}
M_{\epsilon}(\theta_{\epsilon},u)&=&\frac{1}{\sqrt{\epsilon}}\int_{0}^{T}
\left< c_{\theta_{\epsilon,u}}-c_{\theta_{\epsilon}}, \sigma dW_{s}\right>_{\alpha}\left(x_{s},\frac{x_{s}}{\delta}\right)-
\frac{1}{2\epsilon}\int_{0}^{T}\left\Vert c_{\theta_{\epsilon,u}}-c_{\theta_{\epsilon}}\right\Vert^{2}_{\alpha}\left(x_{s},\frac{x_{s}}{\delta}\right) ds\nonumber\\
&=&
\frac{1}{\sqrt{\epsilon}}\int_{0}^{T}
\left< c_{\theta_{\epsilon,u}}-c_{\theta_{\epsilon}}-\left(\sqrt{\epsilon}I^{-1/2}(\theta_{\epsilon})u_{\epsilon},\nabla_{\theta}c_{\theta_{\epsilon,u}}\right), \sigma dW_{s}\right>_{\alpha}\left(x_{s},\frac{x_{s}}{\delta}\right)\nonumber\\
& &+\left(I^{-1/2}(\theta_{\epsilon,u})u_{\epsilon},\int_{0}^{T}
\left<\nabla_{\theta}c_{\theta_{\epsilon,u}}, \sigma dW_{s}\right>_{\alpha}\left(x_{s},\frac{x_{s}}{\delta}\right)\right)\nonumber\\
& &-\frac{1}{2}\int_{0}^{T}\left[\frac{1}{\epsilon}\left\Vert c_{\theta_{\epsilon,u}}-c_{\theta_{\epsilon}}\right\Vert^{2}_{\alpha}\left(x_{s},\frac{x_{s}}{\delta}\right)-
\left(I^{-1/2}(\theta_{\epsilon})u_{\epsilon},q^{1/2}(\bar{X}_{s},\theta_{\epsilon})\right)^{2}\right]ds
-\frac{1}{2}\left(u_{\epsilon},u_{\epsilon}\right)\nonumber\\
&=&J^{\epsilon}_{1}(\theta_{\epsilon})+J^{\epsilon}_{2}(\theta_{\epsilon})+J^{\epsilon}_{3}(\theta_{\epsilon})+J^{\epsilon}_{4}.\nonumber
\end{eqnarray}
The last line of the previous computation is easily seen to hold by the following chain of identities
\begin{equation*}
\int_{0}^{T}\int_{\mathcal{Y}}\left(v,S(\theta_{\epsilon},\bar{X}_{s},y)\right)^{2}\mu_{\theta_{\epsilon}}(dy;\bar{X}_{s})ds=\int_{0}^{T}\left(v,q^{1/2}(\bar{X}_{s},\theta_{\epsilon})\right)^{2}ds=(I(\theta_{\epsilon})v,v).
\end{equation*}
which are applied for $v=I^{-1/2}(\theta_{\epsilon})u_{\epsilon}$.

The goal is to prove that $M_{\epsilon}(\theta_{\epsilon},u)=\left(u,\Phi\right)-\frac{1}{2}\left\Vert u\right\Vert^{2}+R(\epsilon,\theta_{\epsilon})$, where $\Phi$ is distributed as normal $N(0,I)$
and $R(\epsilon,\theta)\rightarrow 0$ as $\epsilon\downarrow 0$ in $\mathbb{P}_{\theta}$ probability uniformly in $\theta\in\Theta$. This, will establish that the family
$\{\mathbb{P}^{\epsilon}_{\theta}: \theta\in \Theta\}$ is uniformly asymptotically normal with normalizing matrix $\phi(\epsilon,\theta)=\sqrt{\epsilon}I^{-1/2}(\theta)$,
which then proves the lemma.

It is clear that
\begin{equation*}
J^{\epsilon}_{4}=-\frac{1}{2}\left(u_{\epsilon},u_{\epsilon}\right)\rightarrow-\frac{1}{2}\left\Vert u\right\Vert^{2}, \textrm{ as }\epsilon\downarrow 0.
\end{equation*}

Moreover, due to averaging and the law of large numbers result Theorem \ref{T:LLN}, the definition of the Fisher information matrix $I(\theta)$ implies that
\begin{align}
J^{\epsilon}_{2}(\theta_{\epsilon})&=\left(I^{-1/2}(\theta_{\epsilon,u})u_{\epsilon},\int_{0}^{T}
\left<\nabla_{\theta}c_{\theta_{\epsilon,u}}, \sigma dW_{s}\right>_{\alpha}\left(x_{s},\frac{x_{s}}{\delta}\right)\right)\nonumber\\
&= \left(I^{-1/2}(\theta_{\epsilon,u})u_{\epsilon},\int_{0}^{T}
\left<S\left(\theta_{\epsilon,u},x_{s},\frac{x_{s}}{\delta}\right),  dW_{s}\right>\right)\nonumber
\end{align}
converges in distribution with respect to $\mathbb{P}_{\theta}$, uniformly in $\theta\in\tilde{\Theta}$, to $\left(u,\Phi\right)$ where $\Phi$ is distributed as $N(0,I)$, as $\epsilon\downarrow 0$.

Thus it remains to consider the term $R(\epsilon,\theta)=J^{\epsilon}_{1}(\theta)+J^{\epsilon}_{3}(\theta)$.
We shall show that both terms converge to zero in $\mathbb{P}_{\theta}$ probability as $\epsilon\downarrow 0$, uniformly in $\theta\in\tilde{\Theta}$ .

We start by observing that
\begin{equation*}
c_{\theta+\ell}-c_{\theta}=\int_{0}^{1}\left(\ell,\nabla_{\theta}c_{\theta+\ell h}\right)dh.
\end{equation*}

Then we can write
\begin{align}
&\mathbb{E} \int_{0}^{T}\left\Vert \sigma^{-1}\left(\frac{1}{\sqrt{\epsilon}}( c_{\theta_{\epsilon,u}}-c_{\theta_{\epsilon}})-\left(I^{-1/2}(\theta_{\epsilon,u})u_{\epsilon},\nabla_{\theta}c_{\theta_{\epsilon,u}}\right)\right)\left(X^{\epsilon}_{s},\frac{X^{\epsilon}_{s}}{\delta}\right)\right\Vert^{2}ds
\nonumber\\
=\;& \mathbb{E}\int_{0}^{T}\left\Vert \left[\sigma^{-1}\int_{0}^{1}\left(I^{-1/2}(\theta_{\epsilon,u})u_{\epsilon},\nabla_{\theta}c_{\theta_{\epsilon,u}+h\sqrt{\epsilon}I^{-1/2}(\theta_{\epsilon,u})u_{\epsilon}}-\nabla_{\theta}c_{\theta_{\epsilon,u}}\right)dh\right]\left(X^{\epsilon}_{s},\frac{X^{\epsilon}_{s}}{\delta}\right)\right\Vert^{2}ds\nonumber\\
\leq\;& |I^{-1/2}(\theta_{\epsilon,u})u_{\epsilon}|^{2}
\sup_{\theta\in\tilde{\Theta}}\sup_{|v|\leq C\sqrt{\epsilon}}
\mathbb{E}\left|\int_{0}^{T}\left\Vert\nabla_{\theta}c_{\theta+v}-\nabla_{\theta}c_{\theta}\right\Vert^{2}_{\alpha}\left(X^{\epsilon}_{s},\frac{X^{\epsilon}_{s}}{\delta}\right)ds\right|\nonumber\\
\leq\;& C
\sup_{\theta\in\tilde{\Theta}}\sup_{|v|\leq C\sqrt{\epsilon}}
\mathbb{E}\left|\int_{0}^{T}\left\Vert\nabla_{\theta}c_{\theta+v}-\nabla_{\theta}c_{\theta}\right\Vert^{2}_{\alpha}\left(X^{\epsilon}_{s},\frac{X^{\epsilon}_{s}}{\delta}\right)ds\right|\rightarrow 0, \textrm{ as }\epsilon\downarrow 0.\label{Eq:J3_term1}
\end{align}
The last convergence is true due to the uniform continuity of $\nabla_{\theta}c_{\theta}$ in $\theta\in\tilde{\Theta}$ and tightness of $\left\{X^{\epsilon},\epsilon>0\right\}$.
Using It\^{o} isometry, the last display implies that
\begin{equation}
\sup_{\theta\in\tilde{\Theta}}\mathbb{E}\left|J^{\epsilon}_{1}(\theta)\right|^{2}\rightarrow 0,  \textrm{ as }\epsilon\downarrow 0. \label{Eq:J1}
\end{equation}

Lastly, it remains to consider the term $J^{3}(\theta)$. Notice that standard averaging principle, the convergence of $X^{\epsilon}$ to $\bar{X}$ as $\epsilon\downarrow 0$ by Theorem \ref{T:LLN}, and the continuous dependence of the involved functions on $\theta$, imply that,
\begin{equation}
\mathbb{E}\left|\int_{0}^{T}\left[
\left\Vert\left(I^{-1/2}(\theta_{\epsilon,u})u_{\epsilon},\nabla_{\theta}c_{\theta_{\epsilon,u}}\right)\right\Vert^{2}_{\alpha}\left(X^{\epsilon}_{s},\frac{X^{\epsilon}_{s}}{\delta}\right)
-
\left(I^{-1/2}(\theta_{\epsilon,u})u_{\epsilon},q^{1/2}(\bar{X}_{s},\theta_{\epsilon,u})\right)^{2}\right]ds\right|\rightarrow 0, \textrm{ as }\epsilon\downarrow 0. \label{Eq:J3_term2}
\end{equation}
By (\ref{Eq:J3_term1})-(\ref{Eq:J3_term2}) and the assumptions on the dependence on $\theta$  we obtain that
\begin{equation*}
\sup_{\theta\in\tilde{\Theta}}\mathbb{E}\left|J^{\epsilon}_{3}(\theta)\right|^{2}\rightarrow 0,  \textrm{ as }\epsilon\downarrow 0.\label{Eq:J3}
\end{equation*}

Therefore, we have obtained that
\begin{equation*}
 \sup_{\theta\in\tilde{\Theta}}\mathbb{E}\left|R(\epsilon,\theta)\right|^{2}\rightarrow 0, \textrm{ as }\epsilon\downarrow 0.
\end{equation*}

This establishes that the family $\{\mathbb{P}^{\epsilon}_{\theta}: \theta\in \Theta\}$ is uniformly asymptotically normal with normalizing matrix $\phi(\epsilon,\theta)=\sqrt{\epsilon}I^{-1/2}(\theta)$, which concludes the proof of the lemma.
\end{proof}

\begin{proof}[Proof of Lemma 5.4]
The proof follows along the lines of Lemma 2.3 in \cite{Kutoyants1994}. We review it here for completeness and mention the required modifications in order to account for the extra
component of averaging. Let $\theta_{i}=\theta+\phi(\epsilon,\theta) u_{i}$ and define the interpolating point
\[
\theta(\ell)=\theta_{1}+(\theta_{2}-\theta_{1})\ell, \quad, \ell\in[0,1]
\]
By an absolutely continuous change of measure we have
\begin{equation*}
\mathbb{E}_{\theta}\left|e^{\frac{1}{2m}M_{\epsilon}(\theta,u_{2})}-e^{\frac{1}{2m}M_{\epsilon}(\theta,u_{1})} \right|^{2m}=\mathbb{E}_{\theta_{1}}\left|L^{\frac{1}{2m}}(\theta_{2},\theta_{1};x)-1 \right|^{2m}
\end{equation*}
where, we have defined $ L(\theta_{2},\theta_{1};x)=\frac{d\mathbb{P}_{\theta_{2}}}{d\mathbb{P}_{\theta_{1}}}(x)$.  Then, we write
\begin{align*}
&\log L(\theta_{2},\theta_{1};x)=\frac{1}{\sqrt{\epsilon}}\int_{0}^{T}
\left< c_{\theta_{2}}-c_{\theta_{1}}, \sigma dW_{s}\right>_{\alpha}\left(X^{\epsilon}_{s},\frac{X^{\epsilon}_{s}}{\delta}\right)-
\frac{1}{2\epsilon}\int_{0}^{T}\left\Vert c_{\theta_{2}}-c_{\theta_{1}}\right\Vert^{2}_{\alpha}\left(X^{\epsilon}_{s},\frac{X^{\epsilon}_{s}}{\delta}\right) ds\nonumber\\
&=\frac{1}{\sqrt{\epsilon}}\int_{0}^{1}\int_{0}^{T}
\left< \nabla_{\theta}c_{\theta(\ell)}(\theta_{2}-\theta_{1}), \sigma dW_{s}\right>_{\alpha}\left(X^{\epsilon}_{s},\frac{X^{\epsilon}_{s}}{\delta}\right)d\ell\nonumber\\
&\quad-
\frac{1}{2\epsilon}\int_{0}^{1}\int_{0}^{T}\left< c_{\theta_{2}}-c_{\theta_{1}}, \nabla_{\theta} c_{\theta(\ell)}(\theta_{2}-\theta_{1})\right>_{\alpha}\left(X^{\epsilon}_{s},\frac{X^{\epsilon}_{s}}{\delta}\right) dsd\ell\nonumber\\
&=2m\int_{0}^{1}\left<f(\theta(\ell)),\theta_{2}-\theta_{1}\right>d\ell,\nonumber
\end{align*}
where
\[
f(\theta(\ell))=\frac{1}{\sqrt{\epsilon}}\int_{0}^{T}
\left< \nabla_{\theta}c_{\theta(\ell)}, \sigma dW_{s}\right>_{\alpha}\left(X^{\epsilon}_{s},\frac{X^{\epsilon}_{s}}{\delta}\right)-
\frac{1}{2\epsilon}\int_{0}^{T}\left< c_{\theta_{2}}-c_{\theta_{1}}, \nabla_{\theta} c_{\theta(\ell)}\right>_{\alpha}\left(X^{\epsilon}_{s},\frac{X^{\epsilon}_{s}}{\delta}\right) ds
\]
Thus, we obtain
\begin{align*}
\mathbb{E}_{\theta_{1}}\left|L^{\frac{1}{2m}}(\theta_{2},\theta_{1};x)-1 \right|^{2m}&\leq \mathbb{E}_{\theta_{1}}\left|\int_{0}^{1}\left<f(\theta(\ell)),\theta_{2}-\theta_{1}\right> e^{\int_{0}^{\ell} \left<f(\theta(\kappa)),\theta_{2}-\theta_{1}\right>d\kappa}d\ell \right|^{2m}\nonumber\\
&\leq \int_{0}^{1}\mathbb{E}_{\theta_{1}}\left[L(\theta_{2},\theta_{1};x)\left<f(\theta(\ell)),\theta_{2}-\theta_{1}\right>^{2m}  \right]d\ell\nonumber\\
&= \epsilon^{-m} (2m)^{-2m} \int_{0}^{1}\mathbb{E}_{\theta_{2}}\left|\int_{0}^{T}\left<\nabla_{\theta}c_{\theta(\ell)}(\theta_{2}-\theta_{1}),\sigma dW_{s}\right>_{\alpha}\left(X^{\epsilon}_{s},\frac{X^{\epsilon}_{s}}{\delta}\right) \right|^{2m}d\ell\nonumber\\
&\leq \epsilon^{-m} C_{m,T} \int_{0}^{1}\mathbb{E}_{\theta_{2}}\left[\int_{0}^{T}\left<\nabla_{\theta}c_{\theta(\ell)}\left(X^{\epsilon}_{s},\frac{X^{\epsilon}_{s}}{\delta}\right),(\theta_{2}-\theta_{1})\right>_{\alpha}^{2m}ds\right]d\ell\nonumber\\
&\leq \epsilon^{-m} C_{m,T} |\theta_{2}-\theta_{1}|^{2m}\sup_{\theta_{2},\theta\in\tilde{\Theta}}\mathbb{E}_{\theta_{2}}\left[\int_{0}^{T}\left\Vert\nabla_{\theta}c_{\theta}\right\Vert_{\alpha}^{2m}\left(X^{\epsilon}_{s},\frac{X^{\epsilon}_{s}}{\delta}\right)ds\right]\nonumber\\
&\leq \epsilon^{-m} \phi^{2m}(\epsilon,\theta) C_{m,T} |u_{2}-u_{1}|^{2m}\sup_{\theta_{2},\theta\in\tilde{\Theta}}\mathbb{E}_{\theta_{2}}\left[\int_{0}^{T}\left\Vert\nabla_{\theta}c_{\theta}\right\Vert_{\alpha}^{2m}\left(X^{\epsilon}_{s},\frac{X^{\epsilon}_{s}}{\delta}\right)ds\right]\nonumber\\
&\leq  I^{-m}(\theta) C_{m,T} |u_{2}-u_{1}|^{2m}\sup_{\theta_{2},\theta\in\tilde{\Theta}}\mathbb{E}_{\theta_{2}}\left[\int_{0}^{T}\left\Vert\nabla_{\theta}c_{\theta}\right\Vert_{\alpha}^{2m}\left(X^{\epsilon}_{s},\frac{X^{\epsilon}_{s}}{\delta}\right)ds\right]\nonumber
\end{align*}
and the result follows by the assumed uniform boundedness of $\sigma^{-1}\nabla_{\theta}c_{\theta}(x,y)$.
\end{proof}

\begin{proof}[Proof of Lemma 5.5]
In the absence of multiple scales, this is Lemma 2.4 in \cite{Kutoyants1994}. Here we provide the proof of the result with the additional component of multiple scales, which makes the analysis more involved. For the sake of concreteness we only present the proof for the case of Regime $1$. The required changes for the other regimes are minimal and are mentioned below at the appropriate place.

Recall that $\phi(\epsilon,\theta)=\sqrt{\epsilon}I^{-1/2}(\theta)$ and set
\[
\Delta c_{\theta}(x,y)=\frac{1}{\sqrt{\epsilon}}\left[c_{\theta+\phi(\epsilon,\theta)u}(x,y)-c_{\theta}(x,y)\right]
\]
We can then write
\begin{align*}
\mathbb{E}_{\theta}e^{pM_{\epsilon}(\theta,u)}&=\mathbb{E}_{\theta}\left[e^{p\int_{0}^{T}\left<\Delta c_{\theta},\sigma dW_{s}\right>_{\alpha}\left(X^{\epsilon}_{s},\frac{X^{\epsilon}_{s}}{\delta}\right)-\frac{p}{2}\int_{0}^{T}\left\Vert \Delta c_{\theta}\right\Vert_{\alpha}^{2}\left(X^{\epsilon}_{s},\frac{X^{\epsilon}_{s}}{\delta}\right)ds}\right]\nonumber\\
&\leq \left(\mathbb{E}_{\theta}e^{-p_1\cdot \frac{p-q}{2}\int_{0}^{T}\left\Vert \Delta c_{\theta}\right\Vert_{\alpha}^{2}\left(X^{\epsilon}_{s},\frac{X^{\epsilon}_{s}}{\delta}\right)ds}\right)^{1/p_{1}}\times \nonumber\\
&\quad\times \left(\mathbb{E}_{\theta}\left[e^{pp_{2}\int_{0}^{T}\left<\Delta c_{\theta},\sigma dW_{s}\right>_{\alpha}\left(X^{\epsilon}_{s},\frac{X^{\epsilon}_{s}}{\delta}\right)-\frac{qp_{2}}{2}\int_{0}^{T}\left\Vert \Delta c_{\theta}\right\Vert_{\alpha}^{2}\left(X^{\epsilon}_{s},\frac{X^{\epsilon}_{s}}{\delta}\right)ds}\right]\right)^{1/p_{2}}
\end{align*}
Choosing now $p_{2}=q/p^{2}>1$, we have that
\[
\mathbb{E}_{\theta}\left[e^{pp_{2}\int_{0}^{T}\left<\Delta c_{\theta},\sigma dW_{s}\right>_{\alpha}\left(X^{\epsilon}_{s},\frac{X^{\epsilon}_{s}}{\delta}\right)-\frac{qp_{2}}{2}\int_{0}^{T}\left\Vert \Delta c_{\theta}\right\Vert_{\alpha}^{2}\left(X^{\epsilon}_{s},\frac{X^{\epsilon}_{s}}{\delta}\right)ds}\right]\leq 1
\]
Setting $\gamma=q\frac{p-q}{2(q-p^{2})}>0$, this implies
\begin{align}
\mathbb{E}_{\theta}e^{pM_{\epsilon}(\theta,u)}
&\leq \left(\mathbb{E}_{\theta}e^{-\gamma\int_{0}^{T}\left\Vert \Delta c_{\theta}\right\Vert_{\alpha}^{2}\left(X^{\epsilon}_{s},\frac{X^{\epsilon}_{s}}{\delta}\right)ds}\right)^{(q-p^{2})/p}\label{Eq:CrucialTermToBound}
\end{align}
So, the next step is to appropriately bound from above the term $\mathbb{E}_{\theta}e^{-\gamma\int_{0}^{T}\left\Vert \Delta c_{\theta}\right\Vert_{\alpha}^{2}\left(X^{\epsilon}_{s},\frac{X^{\epsilon}_{s}}{\delta}\right)ds}$.

At this point, we recall the definition of $q(x,\theta)$ from (\ref{Eq:Definition_q})
 and we write
\begin{eqnarray}
q_{\epsilon,v}(x,\theta)&=&\int_{\mathcal{Y}}\left(\int_{0}^{1}S(\theta+\sqrt{\epsilon}v h,x,y)dh\right)\left(\int_{0}^{1}S(\theta+\sqrt{\epsilon}v h,x,y)dh\right)^{T}\mu_{\theta}(dy;x).\nonumber
\end{eqnarray}
Define the operator
\[
\mathcal{L}_{x} =\frac{1}{2}\sigma(x,y)\sigma^{T}(x,y):\nabla_{y}\nabla_{y}
\]
and for $v=I^{-1/2}(\theta)u$, let $\Phi=\Phi_{\theta,\epsilon,v}(x,y)$ satisfy the auxiliary PDE
\begin{equation}
\mathcal{L}_{x}\Phi(x,y)=\left\Vert \Delta c_{\theta}(x,y)\right\Vert^{2}_{\alpha}-\left(v,q_{\epsilon,v}^{1/2}(x,\theta)\right)^{2}, \qquad \int_{\mathcal{Y}}\Phi(x,y)\mu_{\theta}(dy;x)=0.\label{Eq:PDE_CLT}
\end{equation}
Comparing with the case without the multiple scales, the additional difficulty here is the presence of the fast oscillating component, $X^{\epsilon}/\delta$. The consideration of the solution to this auxiliary
PDE, allows us to reduce the bound for the quantity at hand to a bound for a quantity that depends only on the slow component, $X^{\epsilon}$.

Notice that $\mathcal{L}_{x}$ is the operator for Regime $1$ defined in Definition \ref{Def:ThreePossibleOperators} with $b=0$. For Regimes 2 and 3, one would need to consider the solution to the PDE
governed by the corresponding operators from Definition  \ref{Def:ThreePossibleOperators}. Since,
\begin{align*}
 \int_{\mathcal{Y}}\left\Vert \Delta c_{\theta}(x,y)\right\Vert^{2}_{\alpha}\mu_{\theta}(dy;x)&=
\int_{\mathcal{Y}}\left\Vert \int_{0}^{1}\left(v, \nabla_{\theta}c_{\theta+\sqrt{\epsilon}vh}(x,y)\right)dh\right\Vert^{2}_{\alpha}\mu_{\theta}(dy;x)\nonumber\\
&=\int_{\mathcal{Y}} \left|\left(v, \int_{0}^{1} \sigma^{-1}\nabla_{\theta}c_{\theta+\sqrt{\epsilon}vh}(x,y)dh\right)\right|^{2}
\mu_{\theta}(dy;x)\nonumber\\
&=\left(v, q^{1/2}_{\epsilon,v}(x,\theta)\right)^{2}\nonumber
\end{align*}
 Fredholm alternative, Theorem 3.3.4 of \cite{BLP} guarantees that there exists a unique, smooth, periodic in $y$ and bounded solution to the aforementioned auxiliary PDE for $\Phi$.
The boundedness of $\Theta$ and the imposed conditions on $\nabla_{\theta}c_{\theta}$ also guarantee that $\Phi$ is bounded uniformly in $\theta,(\theta+\sqrt{\epsilon}v)\in\Theta$. Let us apply It\^{o}
formula to $\Phi(x,x/\delta)$ with $x=X^{\epsilon}_{s}$. It\^{o} formula gives an expression similar to (\ref{Eq:ItoFormula1}) and after some term rearrangement, we get for $\theta\in\tilde{\Theta}$ that
\begin{align}
 &\int_{0}^{T}\left[\left\Vert \Delta c_{\theta}\left(X^{\epsilon}_{s},\frac{X^{\epsilon}_{s}}{\delta}\right)\right\Vert^{2}_{\alpha}-\left(v,q_{\epsilon,v}^{1/2}\left(X^{\epsilon}_{s},\theta\right)\right)^{2}\right]ds=
\int_{0}^{T}\mathcal{L}_{X^{\epsilon}_{s}}\Phi\left(X^{\epsilon}_{s},\frac{X^{\epsilon}_{s}}{\delta}\right)ds\nonumber\\
&\quad= (\delta^{2}/\epsilon)\left(\Phi\left(X^{\epsilon}_{t},\frac{X^{\epsilon}_{t}}{\delta}\right)-\Phi\left(X^{\epsilon}_{0},\frac{X^{\epsilon}_{0}}{\delta}\right)\right)\nonumber\\
&\quad -\int_{0}^{T}\left[\frac{\delta}{\epsilon}\left<c_{\theta},\nabla_{y}\Phi\right>+ \frac{\delta^{2}}{\epsilon}\left<c_{\theta},\nabla_{x}\Phi\right>
+\frac{\delta^{2}}{2}\sigma\sigma^{T}:\nabla_{x}\nabla_{x}\Phi+\delta\sigma\sigma^{T}:\nabla_{x}\nabla_{y}\Phi\right]\left(X^{\epsilon}_{s},\frac{X^{\epsilon}_{s}}{\delta}\right)ds\nonumber\\
&\quad - \frac{\delta}{\sqrt{\epsilon}}\int_{0}^{T}\left<\nabla_{y}\Phi,\sigma dW_{s}\right>\left(X^{\epsilon}_{s},\frac{X^{\epsilon}_{s}}{\delta}\right)-
\frac{\delta^{2}}{\sqrt{\epsilon}}\int_{0}^{T}\left<\nabla_{x}\Phi,\sigma dW_{s}\right>\left(X^{\epsilon}_{s},\frac{X^{\epsilon}_{s}}{\delta}\right)\label{Eq:PDE_CLT2}
\end{align}

Due to the boundedness of the involved functions, the last display gives us the existence of a constant $C$ that may depend on $\tilde{\Theta}$ (but not on $(\epsilon,\delta)\in(0,1)^{2}$), such that
\begin{align}
\left|\int_{0}^{T}\left[\left\Vert \Delta c_{\theta}\left(X^{\epsilon}_{s},\frac{X^{\epsilon}_{s}}{\delta}\right)\right\Vert^{2}_{\alpha}-\left(v,q_{\epsilon,v}^{1/2}\left(X^{\epsilon}_{s},\theta\right)\right)^{2}\right]ds\right|\leq
C \left(1+\sup_{t\in[0,T]}|W_{t}|\right) \label{Eq:CrucialTermToBound1}
\end{align}

These computations, allow us to continue the right hand side of (\ref{Eq:CrucialTermToBound}) as follows
\begin{align}
\mathbb{E}_{\theta}e^{-\gamma\int_{0}^{T}\left\Vert \Delta c_{\theta}\right\Vert_{\alpha}^{2}\left(X^{\epsilon}_{s},\frac{X^{\epsilon}_{s}}{\delta}\right)ds}&=
\mathbb{E}_{\theta}\left\{e^{-\gamma\left[\int_{0}^{T}\left[\left\Vert \Delta c_{\theta}\left(X^{\epsilon}_{s},\frac{X^{\epsilon}_{s}}{\delta}\right)\right\Vert^{2}_{\alpha}-\left(v,q_{\epsilon,v}^{1/2}\left(X^{\epsilon}_{s},\theta\right)\right)^{2}\right]ds\right]}\times\right.\nonumber\\
&\quad\qquad\left.\times e^{-\gamma\int_{0}^{T}\left(v,q_{\epsilon,v}^{1/2}\left(X^{\epsilon}_{s},\theta\right)\right)^{2}ds}\right\}\nonumber\\
&\leq \left(\mathbb{E}_{\theta}e^{-\gamma p_{3}\int_{0}^{T}\left(v,q_{\epsilon,v}^{1/2}\left(X^{\epsilon}_{s},\theta\right)\right)^{2}ds}\right)^{1/p_{3}}\times\nonumber\\
&\quad \times  \left(\mathbb{E}_{\theta}e^{-\gamma q_{3}\left[\int_{0}^{T}\left[\left\Vert \Delta c_{\theta}\left(X^{\epsilon}_{s},\frac{X^{\epsilon}_{s}}{\delta}\right)\right\Vert^{2}_{\alpha}-\left(v,q_{\epsilon,v}^{1/2}\left(X^{\epsilon}_{s},\theta\right)\right)^{2}\right]ds\right]}\right)^{1/q_{3}}\nonumber\\
&\leq \left(\mathbb{E}_{\theta}e^{-\gamma p_{3}\int_{0}^{T}\left(v,q_{\epsilon,v}^{1/2}\left(X^{\epsilon}_{s},\theta\right)\right)^{2}ds}\right)^{1/p_{3}}  \left(\mathbb{E}e^{\gamma C q_{3}\left(1+\sup_{t\in[0,T]}|W_{t}|\right)}\right)^{1/q_{3}}
\label{Eq:CrucialTermToBound2}
\end{align}
where, the first inequality in the last computation used H\"{o}lder inequality with $1/p_{3}+1/q_{3}=1$ and the second inequality used (\ref{Eq:CrucialTermToBound1}).

So, we now need to focus on the term $\mathbb{E}_{\theta}e^{-\gamma p_{3}\int_{0}^{T}\left(v,q_{\epsilon,v}^{1/2}\left(X^{\epsilon}_{s},\theta\right)\right)^{2}ds}$.  Define the vector valued function
\[
d_{\theta_{1},\theta}(x,y)=\sqrt{m_{\theta}(x,y)}\sigma^{-1}(x,y)c_{\theta_{1}}(x,y)
\]
and notice that
\begin{align*}
\left(v,q_{\epsilon,v}^{1/2}\left(X^{\epsilon}_{s},\theta\right)\right)^{2}&=\frac{1}{\epsilon}\int_{\mathcal{Y}}\left\Vert c_{\theta +\sqrt{\epsilon}v}(X^{\epsilon}_{s},y)-c_{\theta}(X^{\epsilon}_{s},y)\right\Vert_{\alpha}^{2}\mu_{\theta}(dy;X^{\epsilon}_{s})\nonumber\\
&=\frac{1}{\epsilon}\int_{\mathcal{Y}}\left\Vert d_{\theta +\sqrt{\epsilon}v,\theta}(X^{\epsilon}_{s},y)-d_{\theta,\theta}(X^{\epsilon}_{s},y)\right\Vert^{2}dy\nonumber
\end{align*}
Using the trivial inequality $a^{2}\geq b^{2}-2|b(a-b)|$, applied with $a=d_{\theta +\sqrt{\epsilon}v,\theta}(X^{\epsilon}_{s},y)-d_{\theta,\theta}(X^{\epsilon}_{s},y)$ and
$b=d_{\theta +\sqrt{\epsilon}v,\theta}(\bar{X}_{s},y)-d_{\theta,\theta}(\bar{X}_{s},y)$ we can write
\begin{align*}
\left(v,q_{\epsilon,v}^{1/2}\left(X^{\epsilon}_{s},\theta\right)\right)^{2}&=\frac{1}{\epsilon}\int_{\mathcal{Y}}\left\Vert d_{\theta +\sqrt{\epsilon}v,\theta}(X^{\epsilon}_{s},y)-d_{\theta,\theta}(X^{\epsilon}_{s},y)\right\Vert^{2}dy\nonumber\\
&\geq \frac{1}{\epsilon}\int_{\mathcal{Y}}\left\Vert d_{\theta +\sqrt{\epsilon}v,\theta}(\bar{X}_{s},y)-d_{\theta,\theta}(\bar{X}_{s},y)\right\Vert^{2}dy\nonumber\\
&\qquad -2\frac{1}{\epsilon}\int_{\mathcal{Y}}\left[\left(\left\Vert d_{\theta +\sqrt{\epsilon}v,\theta}(X^{\epsilon}_{s},y)-d_{\theta +\sqrt{\epsilon}v,\theta}(\bar{X}_{s},y)\right\Vert+\left\Vert d_{\theta,\theta}(X^{\epsilon}_{s},y)-d_{\theta,\theta }(\bar{X}_{s},y)\right\Vert\right)\right.\times\nonumber\\
&\qquad\qquad\left. \times\left\Vert d_{\theta+\sqrt{\epsilon}v,\theta}(\bar{X}_{s},y)-d_{\theta,\theta }(\bar{X}_{s},y) \right\Vert\right] dy
\end{align*}
Hence, we obtain the bound
\begin{align}
&\mathbb{E}_{\theta}e^{-\gamma p_{3}\int_{0}^{T}\left(v,q_{\epsilon,v}^{1/2}\left(X^{\epsilon}_{s},\theta\right)\right)^{2}ds}\leq
e^{-\gamma p_{3}\frac{1}{\epsilon}\int_{0}^{T}\int_{\mathcal{Y}}\left\Vert d_{\theta +\sqrt{\epsilon}v,\theta}(\bar{X}_{s},y)-d_{\theta,\theta}(\bar{X}_{s},y)\right\Vert^{2}dy ds}\times\nonumber\\
&\qquad\qquad \times \mathbb{E}_{\theta}\left[e^{2\gamma p_{3}\frac{1}{\epsilon}\int_{0}^{T}\int_{\mathcal{Y}}\left( \left\Vert d_{\theta+\sqrt{\epsilon}v,\theta}(\bar{X}_{s},y)-d_{\theta,\theta}(\bar{X}_{s},y) \right\Vert\left\Vert d_{\theta +\sqrt{\epsilon}v,\theta}(X^{\epsilon}_{s},y)-d_{\theta +\sqrt{\epsilon}v,\theta}(\bar{X}_{s},y)\right\Vert\right)dy ds}\right.\nonumber\\
&\qquad\qquad \qquad\left.\times e^{2\gamma p_{3}\frac{1}{\epsilon}\int_{0}^{T}\int_{\mathcal{Y}}\left(
\left\Vert d_{\theta+\sqrt{\epsilon}v,\theta}(\bar{X}_{s},y)-d_{\theta,\theta}(\bar{X}_{s},y) \right\Vert \left\Vert d_{\theta,\theta}(X^{\epsilon}_{s},y)-d_{\theta,\theta }(\bar{X}_{s},y)\right\Vert \right)dy ds}\right]\label{Eq:BoundForTerm_0}
\end{align}

Notice that the assumption of uniform positive definiteness of the Fisher information matrix $I(\theta)$ guarantees that
\begin{align}
\int_{0}^{T}\left(v,q^{1/2}(\bar{X}_{s},\theta)\right)^{2}ds=(I(\theta)v,v)&\geq \left\Vert v\right\Vert^{2}\inf_{\theta\in\Theta}\inf_{|\lambda|=1}\left(I(\theta)\lambda,\lambda\right)\nonumber\\
&\geq \left\Vert v\right\Vert^{2}c_{0}\nonumber
\end{align}

So, as  $\left\Vert \sqrt{\epsilon}v \right\Vert\rightarrow 0$ we will have
\begin{align}
&\frac{1}{\epsilon}\int_{0}^{T}\int_{\mathcal{Y}}\left\Vert d_{\theta +\sqrt{\epsilon}v,\theta}(\bar{X}_{s},y)-d_{\theta,\theta}(\bar{X}_{s},y)\right\Vert^{2}dy ds\nonumber\\
&\qquad=\frac{1}{\epsilon}\int_{0}^{T}\int_{\mathcal{Y}}\left\Vert c_{\theta +\sqrt{\epsilon}v}(\bar{X}_{s},y)-c_{\theta}(\bar{X}_{s},y)\right\Vert_{\alpha}^{2}\mu_{\theta}(dy;\bar{X}_{s}) ds\nonumber\\
&\qquad=\int_{0}^{T}\int_{\mathcal{Y}}\left\Vert \int_{0}^{1}\left(v,\nabla_{\theta}c_{\theta +\sqrt{\epsilon}v h}(\bar{X}_{s},y)\right)dh\right\Vert_{\alpha}^{2}\mu_{\theta}(dy;\bar{X}_{s})ds\nonumber\\
&\qquad=\int_{0}^{T}\int_{\mathcal{Y}}\left\Vert \left(v,\nabla_{\theta}c_{\theta}(\bar{X}_{s},y)\right)\right\Vert_{\alpha}^{2}\mu_{\theta}(dy;\bar{X}_{s})ds\nonumber\\
&\quad\qquad+\int_{0}^{T}\int_{\mathcal{Y}}\left\Vert \int_{0}^{1}\left(v,\nabla_{\theta}c_{\theta +\sqrt{\epsilon}v h}(\bar{X}_{s},y)-\nabla_{\theta}c_{\theta}(\bar{X}_{s},y)\right)dh\right\Vert_{\alpha}^{2}\mu_{\theta}(dy;\bar{X}_{s})ds+o(\left\Vert  v \right\Vert^{2} )\nonumber\\
&\qquad=\int_{0}^{T}\left(v,q^{1/2}(\bar{X}_{s},\theta)\right)^{2}ds+o(\left\Vert   v \right\Vert^{2} )\nonumber\\
&\qquad\geq \left\Vert  v\right\Vert^{2}\left(c_{0}+o(1)\right).\label{Eq:BoundForTerm2.3_0}
\end{align}
The assumed uniform boundedness of $\sigma^{-1}c_{\theta}$, the fact that $m_{\theta}$ is a density and the lower bound from (\ref{Eq:BoundForTerm2.3_0}) mean that there exist constants $C_{2},C_{3}$ that may depend on $\tilde{\Theta}$ such that
\begin{equation*}
C_{2}\left\Vert v\right\Vert^{2}\leq \frac{1}{\epsilon}\int_{0}^{T}\int_{\mathcal{Y}}\left\Vert d_{\theta +\sqrt{\epsilon}v,\theta}(\bar{X}_{s},y)-d_{\theta,\theta}(\bar{X}_{s},y)\right\Vert^{2}dy ds\leq C_{3}\left\Vert v\right\Vert^{2}\label{Eq:BoundForTerm2.3_1}
\end{equation*}

Moreover, by Cauchy-Schwartz inequality, we also have that
\begin{align}
&\left(\int_{0}^{T}\int_{\mathcal{Y}}\left(
\left\Vert d_{\theta+\sqrt{\epsilon}v,\theta}(\bar{X}_{s},y)-d_{\theta,\theta}(\bar{X}_{s},y) \right\Vert\left\Vert d_{\theta,\theta}(X^{\epsilon}_{s},y)-d_{\theta,\theta }(\bar{X}_{s},y)\right\Vert\right)dy ds\right)^{2}\nonumber\\
&\leq
\int_{0}^{T}\int_{\mathcal{Y}}\left\Vert d_{\theta+\sqrt{\epsilon}v,\theta}(\bar{X}_{s},y)-d_{\theta,\theta}(\bar{X}_{s},y) \right\Vert^{2}dyds \int_{0}^{T}\int_{\mathcal{Y}}\left\Vert d_{\theta,\theta}(X^{\epsilon}_{s},y)-d_{\theta,\theta }(\bar{X}_{s},y)\right\Vert^{2} dy ds\nonumber\\
&\leq C_{3}\epsilon\left\Vert v\right\Vert^{2} \int_{0}^{T}\int_{\mathcal{Y}}\left\Vert d_{\theta,\theta}(X^{\epsilon}_{s},y)-d_{\theta,\theta }(\bar{X}_{s},y)\right\Vert^{2}dy ds\nonumber\\
&\leq C_{3}C_{4}\epsilon \left\Vert v\right\Vert^{2} \int_{0}^{T} \left\Vert X^{\epsilon}_{s}-\bar{X}_{s}\right\Vert^{2}ds\nonumber\\
&\leq C_{3}C_{4}T\epsilon \left\Vert v\right\Vert^{2} \sup_{t\in[0,T]} \left\Vert X^{\epsilon}_{t}-\bar{X}_{t}\right\Vert^{2}\label{Eq:BoundForTerm2.3_2a}
\end{align}
To derive the inequality before the last one, we used the Lipschitz continuity in $x$ of the function $d_{\theta,\theta}(x,y)$, with a Lipschitz constant $C_{4}$ that may depend on $\tilde{\Theta}$. To continue, we need to bound from above the quantity $\sup_{t\in[0,T]} \left\Vert X^{\epsilon}_{t}-\bar{X}_{t}\right\Vert^{2}$. For this purpose, we set $\bar{c}_{\theta}(x)=\int_{\mathcal{Y}}c_{\theta}(x,y)\mu_{\theta}(dy;x)$ and write
\begin{align}
X^{\epsilon}_{t}-\bar{X}_{t}&=\int_{0}^{t}c_{\theta}\left(X^{\epsilon}_{s},\frac{X^{\epsilon}_{s}}{\delta}\right)ds-\int_{0}^{t}\bar{c}_{\theta}\left(\bar{X}_{s}\right)ds+\sqrt{\epsilon} \int_{0}^{t}\sigma\left(X^{\epsilon}_{s},\frac{X^{\epsilon}_{s}}{\delta}\right)dW_{s}\nonumber\\
&=\int_{0}^{t}\left[c_{\theta}\left(X^{\epsilon}_{s},\frac{X^{\epsilon}_{s}}{\delta}\right)-\bar{c}_{\theta}\left(X^{\epsilon}_{s}\right)\right]ds+\int_{0}^{t}\left[\bar{c}_{\theta}\left(X^{\epsilon}_{s}\right)-\bar{c}_{\theta}\left(\bar{X}_{s}\right)\right]ds+\sqrt{\epsilon} \int_{0}^{t}\sigma\left(X^{\epsilon}_{s},\frac{X^{\epsilon}_{s}}{\delta}\right)dW_{s}\nonumber
\end{align}
Thus, we obtain
\begin{align}
\left\Vert X^{\epsilon}_{t}-\bar{X}_{t}\right\Vert^{2}
&\leq 2^{3}\left\{\left\Vert \int_{0}^{t}\left[c_{\theta}\left(X^{\epsilon}_{s},\frac{X^{\epsilon}_{s}}{\delta}\right)-\bar{c}_{\theta}\left(X^{\epsilon}_{s}\right)\right]ds\right\Vert^{2}+\int_{0}^{t}\left\Vert\bar{c}_{\theta}\left(X^{\epsilon}_{s}\right)-\bar{c}_{\theta}\left(\bar{X}_{s}\right)\right\Vert^{2}ds\right.\nonumber\\
&\qquad\qquad\left.+\epsilon \left\Vert\sigma\right\Vert^{2}\sup_{s\in[0,t]}\left\Vert W_{s}\right\Vert^{2}\right\}\nonumber\\
&\leq 2^{3}\left\{ \left\Vert \int_{0}^{t}\left[c_{\theta}\left(X^{\epsilon}_{s},\frac{X^{\epsilon}_{s}}{\delta}\right)-\bar{c}_{\theta}\left(X^{\epsilon}_{s}\right)\right]ds\right\Vert^{2}+C_{5}\int_{0}^{t}\left\Vert X^{\epsilon}_{s}-\bar{X}_{s}\right\Vert^{2}ds+\epsilon \left\Vert\sigma\right\Vert^{2}\sup_{s\in[0,t]}\left\Vert W_{s}\right\Vert^{2} \right\}
\label{Eq:BoundForTerm2.3_2}
\end{align}
In the last inequality, we used the Lipschitz continuity of $\bar{c}_{\theta}$ with a Lipschitz constant $C_{5}$ that may depend on $\tilde{\Theta}$. Let us explain now how the term $\left\Vert \int_{0}^{t}\left[c_{\theta}\left(X^{\epsilon}_{s},\frac{X^{\epsilon}_{s}}{\delta}\right)-\bar{c}_{\theta}\left(X^{\epsilon}_{s}\right)\right]ds\right\Vert^{2}$ can be treated.  By considering the solution to an auxiliary PDE  problem analogous to (\ref{Eq:PDE_CLT}) with right hand side replaced by $c_{\theta}(x,y)-\bar{c}_{\theta}(x)$, we get  (similarly to (\ref{Eq:PDE_CLT2})) that
\[
\left\Vert \int_{0}^{t}\left[c_{\theta}\left(X^{\epsilon}_{s},\frac{X^{\epsilon}_{s}}{\delta}\right)-\bar{c}_{\theta}\left(X^{\epsilon}_{s}\right)\right]ds\right\Vert\leq C_{6}\left(1+\frac{\delta}{\sqrt{\epsilon}} \sup_{s\in[0,t]}\left\Vert W_{s}\right\Vert\right)
\]
For some constant $C_{6}$ that may depend on $\tilde{\Theta}$. Thus putting things together, (\ref{Eq:BoundForTerm2.3_2}) takes the form
\begin{align}
\left\Vert X^{\epsilon}_{t}-\bar{X}_{t}\right\Vert^{2}
&\leq C_{7}\left\{ 1+ \int_{0}^{t}\left\Vert X^{\epsilon}_{s}-\bar{X}_{s}\right\Vert^{2}ds+\left(\epsilon+\frac{\delta^{2}}{\epsilon} \right)\sup_{s\in[0,t]}\left\Vert W_{s}\right\Vert^{2} \right\}
\label{Eq:BoundForTerm2.3_3}
\end{align}
and by Grownwall inequality, we can conclude that there exists a constant $C_{8}$, that may depend on $\tilde{\Theta}$, such that
\begin{align}
\sup_{t\in[0,T]}\left\Vert X^{\epsilon}_{t}-\bar{X}_{t}\right\Vert
&\leq C_{8}\sqrt{\epsilon+\frac{\delta^{2}}{\epsilon}}\sup_{t\in[0,T]}\left\Vert W_{t}\right\Vert
\label{Eq:BoundForTerm2.3_4a}
\end{align}
Coming back to (\ref{Eq:BoundForTerm2.3_2a}), we have obtained
\begin{align}
&\frac{1}{\epsilon}\int_{0}^{T}\int_{\mathcal{Y}}\left(
\left\Vert d_{\theta+\sqrt{\epsilon}v,\theta}(\bar{X}_{s},y)-d_{\theta,\theta}(\bar{X}_{s},y) \right\Vert\left\Vert d_{\theta,\theta}(X^{\epsilon}_{s},y)-d_{\theta,\theta }(\bar{X}_{s},y)\right\Vert\right)dy ds\nonumber\\
&\qquad\qquad\leq \sqrt{C_{3}C_{4}}C_{8}\left\Vert v\right\Vert  \sup_{t\in[0,T]}\left\Vert W_{t}\right\Vert\label{Eq:BoundForTerm2.3_4}
\end{align}
Set $C_{9}=\sqrt{C_{3}C_{4}T}C_{8}$. Putting (\ref{Eq:BoundForTerm2.3_0}) and (\ref{Eq:BoundForTerm2.3_4}) together and recalling that $v=I^{-1/2}(\theta)u$, the bound (\ref{Eq:BoundForTerm_0}) becomes
\begin{align}
\mathbb{E}_{\theta}e^{-\gamma p_{3}\int_{0}^{T}\left(v,q_{\epsilon,v}^{1/2}\left(X^{\epsilon}_{s},\theta\right)\right)^{2}ds}&\leq
e^{-\gamma p_{3}C_{2}\left\Vert u \right\Vert^{2}} \mathbb{E}_{\theta}\left[e^{4\gamma p_{3}C_{9}\left\Vert u\right\Vert \sup_{t\in[0,T]}\left\Vert W_{t}\right\Vert}\right]\nonumber\\
&\leq
e^{-\gamma p_{3}C_{2}\left\Vert u \right\Vert^{2}} \left[ 1+4\gamma p_{3} C_{9} \left\Vert u\right\Vert \sqrt{8\pi T} e^{8\gamma^{2}\left(p_{3}C_{9}T\right)^{2} \left\Vert u\right\Vert^{2}}\right],
\label{Eq:BoundForTerm_1}
\end{align}
where the last inequality used Lemma 1.14 by Kutoyants, \cite{Kutoyants1994}. Now, we have all the necessary ingredients in order to continue the bound of (\ref{Eq:CrucialTermToBound}). In particular, using (\ref{Eq:CrucialTermToBound2}), (\ref{Eq:CrucialTermToBound}) gives
\begin{align}
\mathbb{E}_{\theta}e^{pM_{\epsilon}(\theta,u)}
&\leq \left(\mathbb{E}_{\theta}e^{-\gamma\int_{0}^{T}\left\Vert \Delta c_{\theta}\right\Vert_{\alpha}^{2}\left(X^{\epsilon}_{s},\frac{X^{\epsilon}_{s}}{\delta}\right)ds}\right)^{(q-p^{2})/p}\nonumber\\
&\leq \left(\mathbb{E}_{\theta}e^{-\gamma p_{3}\int_{0}^{T}\left(v,q_{\epsilon,v}^{1/2}\left(X^{\epsilon}_{s},\theta\right)\right)^{2}ds}\right)^{(q-p^{2})/(pp_{3})}  \left(\mathbb{E}e^{\gamma q_{3}C\left(1+\sup_{t\in[0,T]}|W_{t}|\right)}\right)^{(q-p^{2})/(pq_{3})}\label{Eq:CrucialTermToBound4}
\end{align}

Choosing $p,q$ such that $\gamma=\frac{p_{3}C_{2}}{16 \left(p_{3}C_{9}T\right)^{2}}$ and using the inequality $1+x\leq e^{x}$, we then obtain from (\ref{Eq:BoundForTerm_1})
\begin{align}
\left(\mathbb{E}_{\theta}e^{-\gamma p_{3}\int_{0}^{T}\left(v,q_{\epsilon,v}^{1/2}\left(X^{\epsilon}_{s},\theta\right)\right)^{2}ds}\right)^{(q-p^{2})/(pp_{3})}&\leq
e^{-\frac{C_{2}}{2}\left\Vert u \right\Vert^{2}}  \left[ 1+4\gamma p_{3} C_{9} \left\Vert u\right\Vert \sqrt{8\pi T} \right]^{(q-p^{2})/(pp_{3})}\nonumber\\
&\leq e^{-\frac{C_{2}}{2}\left\Vert u \right\Vert^{2} +\frac{q-p^{2}}{p}4\gamma  C_{9} \left\Vert u\right\Vert \sqrt{8\pi T} }
\label{Eq:CrucialTermToBound5}
\end{align}
So, (\ref{Eq:CrucialTermToBound4}) and (\ref{Eq:CrucialTermToBound5}) give
\begin{align*}
\mathbb{E}_{\theta}e^{pM_{\epsilon}(\theta,u)}
&\leq e^{-\frac{C_{2}}{2}\left\Vert u \right\Vert^{2} +\frac{q-p^{2}}{p}4\gamma  C_{9} \left\Vert u\right\Vert \sqrt{8\pi T} } \left(\mathbb{E}e^{\gamma q_{3}C\left(1+\sup_{t\in[0,T]}|W_{t}|\right)}\right)^{(q-p^{2})/(pq_{3})}\label{Eq:CrucialTermToBound6}
\end{align*}
The right hand side of the last inequality defines our function $g(\left\Vert u\right\Vert)$, which certainly enjoys the property
\[
\lim_{u\rightarrow\infty}u^{n}e^{-g(\left\Vert u\right\Vert)}=0,\quad \forall n\in\mathbb{N}
\]
This concludes the proof of the lemma.
\end{proof}


\end{document}